\documentclass[11pt,reqno,a4paper]{amsart}
\usepackage{amssymb,dsfont,mathtools}
\usepackage{pifont}
\usepackage{color}
\usepackage{bm}
\usepackage{graphicx}
\usepackage{subcaption}
\usepackage{diagbox}
\usepackage{cite}
\usepackage{float}
\definecolor{AMSSblue}{cmyk}{0.94,0.75,0.1,0}
\definecolor{AMSSgreen}{RGB}{0, 127, 0}
\definecolor{AMSSyellow}{RGB}{240,140,0}
\definecolor{AMSSred}{RGB}{210, 26, 66}

\usepackage[colorlinks,linkcolor=AMSSred,anchorcolor=AMSSred,citecolor=AMSSgreen]{hyperref}
\newtheorem{theorem}{Theorem}[section]
\newtheorem{lemma}[theorem]{Lemma}
\newtheorem{proposition}[theorem]{Proposition}
\newtheorem{corollary}[theorem]{Corollary}

\theoremstyle{definition}
\newtheorem{definition}[theorem]{Definition}

\theoremstyle{remark}
\newtheorem{remark}{Remark}[section]
\numberwithin{equation}{section}

\newcommand{\dd}{{\rm d}}
\newcommand{\ee}{{\rm e}}
\newcommand{\s}{{\sharp}}
\newcommand{\f}{{\flat}}
\newcommand{\n}{{\natural}}
\newcommand{\LL}{{\mathfrak{L}}}

\newcommand{\DD}{{\mathfrak{D}}}
\newcommand{\qq}{{\mathfrak{q}}}
\allowdisplaybreaks[4]

\begin{document}
	\title[Inverse Piston Problem]{An Inverse Problem for Determining the Piston Speed from a Given Lipschitz Leading Shock}
	
	\author{Gui-Qiang G. Chen}
	\address{Mathematical Institute, University of Oxford,
		Oxford, OX2 6GG, UK; School of Mathematical Sciences, Fudan University, Shanghai 200433, China. }
	\email{\tt  gui-qiang.chen@maths.ox.ac.uk}
	\thanks{Gui-Qiang G. Chen was supported in part by the UK Engineering
		and Physical Sciences Research Council Awards 
		EP/V008854 and EP/V051121/1. }
	\author{Qianfeng Li}
	\address{Department of Mathematics, Friedrich--Alexander--Universit\"at Erlangen--N\"urnberg, Cauerstr. 11, 91058 Erlangen, Germany}
	\email{qianfeng.li@fau.de}
	\thanks{Qianfeng Li was partially supported by Sino--German (CSC--DAAD) Postdoc Scholarship Program, 2023 (No. 57678375). }
	
	\author{Yun Pu}
	\address{Academy of Mathematics and Systems Science, Chinese Academy of Sciences, Beijing 100190, China}
	\email{ypu@amss.ac.cn}
	\author{Yongqian Zhang}
	\address{School of Mathematical Sciences, Fudan University, Shanghai 200433, China.}
	\email{yongqianz@fudan.edu.cn}
	\thanks{Yongqian Zhang was partially supported by NSFC Project 11421061 and by NSFC Project 12271507.}
	
	\subjclass[2020]{35B07, 35B20, 35D30, 35L65, 35L67, 76J20, 76L05, 76N10}
	
	\date{\today}
	
	\dedicatory{In Memory of Peter D. Lax} 
	
\keywords{Inverse problem, piston speed, isentropic Euler equations, $p$-system, Lipschitz shock, entropy solutions, 
	Lax entropy inequality, 
	wavefront tracking scheme, 
	Glimm-type functional.}

\begin{abstract}
	We analyze an inverse problem for determining the piston speed and the associated flow field from a prescribed leading 
	shock and the initial data in a shock tube. 
	The gas flow is described by the isentropic Euler equations ({\it i.e.}, the $p$-system), 
	while the trajectory of the leading shock is prescribed as a given Lipschitz curve. 
	Under an Ole{\u{i}}nik-type entropy condition on the leading shock, 
	we develop a modified wavefront tracking scheme to construct the flow field behind the shock. 
	This construction enables us to determine the corresponding piston speed and the associated flow field.
\end{abstract}
\maketitle

\section{Introduction}
When a piston moves into a gas initially at rest, a discontinuity, known as a shock wave, is immediately generated; 
see Fig.~\ref{fig:shock-tube}. 
This phenomenon is a fundamental feature of the shock-tube problem 
and plays a fundamental role in the general theory of nonlinear wave propagation in compressible fluids; 
see 
Courant-Friedrichs \cite{Courant1948} and Dafermos \cite{Dafermos2026}.

Considerable effort has been devoted to the corresponding direct problem of determining the flow field and leading shock-front 
from prescribed initial data and piston motion. 
For results concerning weighted $C^1$--perturbations, 
see \cite{li1994global,Wanglibin2007,Li1991GlobalShock,WangLibin2014}
and the references therein. 
For BV perturbations with prescribed piston speed, see \cite{amadori1997initial,Wang2005GlobalExistence,MR3582280} 
and the references therein. 
Results on $L^\infty$--solutions may be found in \cite{takeno1995free}
and the references therein.

\begin{figure}[ht]
\centering

\begin{subfigure}[b]{0.54\textwidth}    
\centering
\raisebox{0.6cm}{\includegraphics[width=\textwidth]{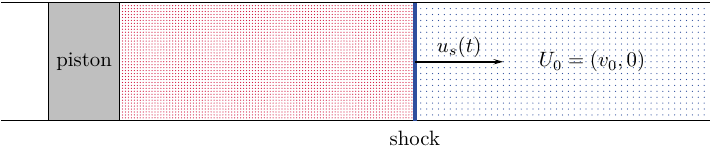}}
\caption{Shock tube}
\label{fig:shock-tube}
\end{subfigure}
\hfill
\begin{subfigure}[b]{0.44\textwidth}
\centering
\includegraphics[width=\textwidth]{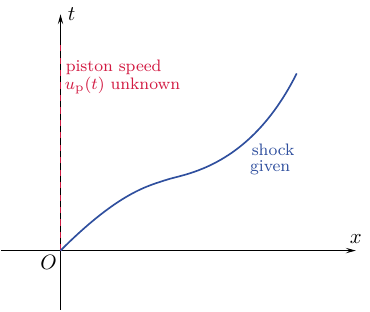}
\caption{Inverse piston problem}
\label{fig:inverse-piston-problem}
\end{subfigure}
\caption{Comparison of the shock tube and the inverse piston problem.}
\label{fig:combined-problems}
\end{figure}

In this paper, we analyze an inverse problem for determining the piston trajectory and the associated flow field 
from a prescribed leading shock and the initial data. 
In Lagrangian coordinates, the one-dimensional (1-D) isentropic gas dynamics is governed by the isentropic Euler equations, {\it i.e.}, 
the $p$-system:
\begin{equation}\label{eq:p-system}
\left\{  \begin{aligned}
\partial_tv-\partial_xu=0,\\
\partial_tu+\partial_xp=0,
\end{aligned}
\right.
\end{equation}
where $v>0$ is the specific volume, $u$ is the velocity, and $p=Av^{-\gamma}$ is the pressure, 
where $A>0$ is a constant and $\gamma\in(1,3)$ is the adiabatic constant.

System \eqref{eq:p-system} can be written in the form of conservation laws:
\begin{equation}\label{1.1a}
\partial_t U + \partial_x F(U)=0
\end{equation}
with $U=(v,u)^\top$ and $F(U)=(-u, p)^\top$.

The initial data for the gas at rest are given by 
\begin{equation}\label{eq:initial-condition}
U(x,0)=(v_0,0)^\top =:
U_0 \qquad \mbox{for $x>0$},
\end{equation}
where the specific volume $v_0>0$ is a given constant. 
The prescribed leading shock is given by 
\[
S\coloneqq \big\{(x,t)\,:\, x=\chi(t),\, t\geq 0\big\}
\]
with \(\chi(0)=0\) and \(\chi(\cdot) \in {\rm Lip}(\mathbb{R}_+)\).

The piston trajectory coincides with the positive $t$--axis, which is denoted by $\Gamma$.  
On the piston trajectory, the flow velocity coincides with the piston speed:
\begin{equation}\label{BoundaryC}
u(0,t)=u_{\rm p}(t).
\end{equation}
In this paper, we aim to determine 
the piston speed \(u_{\rm p}\) and the associated flow field  
$U=(v,u)$ in the region between $\Gamma$ and $S$,
given $\chi(\cdot)$ {\it a priori}; see Fig.~\ref{fig:inverse-piston-problem}. 

We seek entropy solutions of this inverse problem. 
\begin{definition}[Entropy Solutions]\label{def:entropy-solution}
A piston speed function $u_{\rm p}(t)\in\text{\rm BV}([0,\infty))$ 
and an associated flow field $U_{\rm e}=(v_{\rm e},u_{\rm e})^\top\in {\rm BV}_{\rm loc}(\mathbb{R}_+\times\mathbb{R}_+)$
form an entropy solution of the inverse problem \eqref{eq:p-system}--\eqref{BoundaryC}
if they satisfy the following{\rm :}
\begin{enumerate}
\item[\rm(i)] $U_{\rm e}$ is a global weak solution of \eqref{eq:p-system}, 
satisfying \eqref{eq:initial-condition}--\eqref{BoundaryC}
in the trace sense{\rm ;}

\smallskip
\item[\rm (ii)] For any 
	convex entropy $\eta\in C^1(\mathbb{R}^2;\mathbb{R})$ and the associated entropy flux $q\in C^1(\mathbb{R}^2;\mathbb{R})$,
satisfying $D\eta\, DF=Dq$,
the flow field $U_{\rm e}$ satisfies the Lax entropy inequality{\rm :}
\begin{equation}\label{eq:entropy}
	\partial_t\eta(U_{\rm e})+\partial_x q(U_{\rm e})\le 0
\end{equation}
in the distributional sense on $\mathbb{R}_+\times\mathbb{R}_+$; see Lax \cite{Lax_1973}.
\end{enumerate}
\end{definition}

We now state the main theorem:  
\begin{theorem}\label{thm:main}
Let $v_0>0$ and $s_0>c_0:=\sqrt{-p'(v_0)}$. Then 
\begin{enumerate}
\item[\rm (i)] \textbf{Existence of the profile}{\rm :} 
There exists a unique state  $\overline{U}=(\overline{v},\overline{u})^\top$ such that the piston speed $u_{\rm p}=\overline{u}$ 
and the vector function
\begin{align*}
	U_{\rm pro}(x,t)={\begin{cases}
			\overline{U}\,\, &\mbox{for $0<x<s_0\,t$},\\
			U_0\,\, &\mbox{for $x>s_0\,t$},
	\end{cases}}
\end{align*}
form an entropy solution.

\smallskip
\item[\rm (ii)] \textbf{Stability of the profile}{\rm :} 
Let \(s=\dot{\chi}\in {\rm BV}\cap L^\infty([0,\infty))\) be the speed of a leading shock such that \(s(t)+\qq(t)\) is monotonically increasing, 
where $\qq\in C^1([0,\infty))$ is a function satisfying 
\begin{equation}\label{eq:q-condition}
	0\le (1+t)\,\qq'(t)\le \Lambda \qquad\mbox{for $\Lambda>0$ sufficiently small}.
\end{equation}
Then there exists $\epsilon_{\rm inv}>0$ so that, for any such speed $s$ satisfying
\begin{equation}\label{eq:solution-bound}
	\|s-s_0\|_{L^\infty\cap BV}<\epsilon_{\rm inv},
\end{equation}
there exists a flow field \(U=(v,u)\) in the region between $\Gamma$ and $S$ such that the piston speed $u_{\rm p}=u(0,\cdot)$ 
and the vector function 
\begin{align*}
	U_{\rm e}(x,t)={\begin{cases}
			U(x,t)\,\, &\mbox{for $0<x<\chi(t)$},\\
			U_0\,\, &\mbox{for $x>\chi(t)$},
	\end{cases}}
\end{align*}
form an entropy solution to the inverse problem \eqref{eq:p-system}--\eqref{BoundaryC}.
Furthermore,
\begin{equation}\label{eq:stability}
	\|u_{\rm p}(\cdot)-\overline{u}\|_{L^\infty\cap BV}\le C\,\|s-s_0\|_{L^\infty\cap BV}
\end{equation}
for some $C>0$ depending only on $v_0$, $s_0$, and system \eqref{eq:p-system}. 
\end{enumerate}
\end{theorem}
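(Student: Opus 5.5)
Part (i) is a direct computation with the Rankine--Hugoniot relations for \eqref{eq:p-system}. For a shock of speed $s_0$ joining $\overline{U}$ on the left to $U_0$ on the right, these read $s_0(\overline v-v_0)=-(\overline u-0)$ and $s_0(\overline u-0)=p(\overline v)-p(v_0)$. Eliminating $\overline u$ gives $s_0^2=-\frac{p(\overline v)-p(v_0)}{\overline v-v_0}$. Since $p$ is strictly convex and decreasing, the chord slope $\overline v\mapsto-\frac{p(\overline v)-p(v_0)}{\overline v-v_0}$ is strictly decreasing on $(0,v_0)$. It tends to $c_0^2$ as $\overline v\to v_0$ and to $+\infty$ as $\overline v\to0$. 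So $s_0>c_0$ gives a unique $\overline v\in(0,v_0)$, and then $\overline u=s_0(v_0-\overline v)>0$. This is a compressive 2-shock, so it satisfies the Lax inequalities $\lambda_2(\overline U)>s_0>\lambda_2(U_0)=c_0$. For the $p$-system with convex $p$, this implies the entropy inequality \eqref{eq:entropy} across the jump. The constant states solve the equations trivially, and $u(0,t)=\overline u$ is the piston speed. Uniqueness comes from the strict monotonicity, together with excluding the other branch $\overline v>v_0$, which would be a rarefaction-type (non-entropic) jump.

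For part (ii) I would build approximate solutions by a modified wavefront tracking scheme in the region $0<x<\chi(t)$. Approximate $s$ by piecewise constant speeds $s^\nu$ with jumps at times $t_k$, and approximate $\chi$ by the resulting polygon $\chi^\nu$. At each time, the state $U^\nu$ just behind the shock is determined from $U_0$ and $s^\nu$ by the part (i) map $s\mapsto \overline U(s)$, which is smooth with $\overline U'(s_0)\neq0$.

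At a speed change at $t_k$, the state behind the shock jumps. I would resolve this as a Riemann problem in which the outgoing wave is a reflected 1-wave (backward, toward the piston) of strength $O(|s^\nu(t_k^+)-s^\nu(t_k^-)|)$. A 2-wave hitting the shock from behind must instead be absorbed, and the shock speed is then prescribed rather than computed. I would resolve this by solving for the state behind the shock given the new prescribed $s$. The mismatch generates a reflected 1-wave whose strength is bounded by $C(|\alpha_2|+|\Delta s|)$.

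At the piston $x=0$, the velocity $u_{\rm p}$ is unknown. It is natural to impose a reflection there that produces no prescribed data: an incoming 1-wave is simply transmitted as the boundary value. Equivalently, $u_{\rm p}$ is read off as the trace, and no reflected 2-wave is generated except through interactions. Wave interactions inside the domain use the standard accurate/simplified Riemann solvers.

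The core estimate is a Glimm-type functional $F(t)=L_1+K_2L_2+K\,Q+K_s\,{\rm TV}(s^\nu;[t,\infty))$, with weights chosen so that $F$ is non-increasing. The main obstacle is the reflection coefficients at the two boundaries. A 1-wave hitting the piston returns as a 2-wave toward the shock, and at the shock it re-reflects as a 1-wave. I need the product of these reflection coefficients to be strictly less than $1$ near $\overline U$, which I would verify by explicit computation using $\gamma\in(1,3)$.

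The second difficulty is that rarefactions generated when $s$ decreases cannot be allowed to accumulate or to produce a non-entropic shock. This is where the Ole\u{\i}nik-type condition enters: $s+\qq$ is increasing with $(1+t)\qq'\le\Lambda$. It bounds the total decrease of $s$ on $[t_1,t_2]$ by $\Lambda\log\frac{1+t_2}{1+t_1}$. This lets me control the reflected rarefaction fronts, whose decay is compensated by the $(1+t)$ weight, and ensures the leading shock stays a genuine 2-shock with strength near that of $s_0$.

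Once $F$ is uniformly bounded by $C\|s-s_0\|_{L^\infty\cap BV}$, Helly's theorem gives a convergent subsequence $U^\nu\to U$ in $L^1_{\rm loc}$. The standard wavefront-tracking consistency argument then shows that $U$ is an entropy solution, since the non-physical fronts carry vanishing total strength. The trace $u_{\rm p}=u(0,\cdot)$ exists by BV. Its total variation and its oscillation about $\overline u$ are controlled by $L_1+L_2$ at the boundary, which gives \eqref{eq:stability}.
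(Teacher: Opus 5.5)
Part (i) is fine: your chord-slope monotonicity argument is equivalent to the paper's Lemma \ref{lem:profile}. Part (ii), however, has a genuine gap. You set up a forward-in-time, direct-problem scheme, and with the shock speed \emph{prescribed} that scheme is not well defined.

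Behind the shock $\lambda_1<0<s<\lambda_2$, so forward in time only a $1$-wave can leave the shock into $\{0<x<\chi(t)\}$. Meanwhile, the Rankine--Hugoniot relations with $U_0$ and the prescribed $s$ fix \emph{both} components of the state behind the shock, namely $G(s;U_0)$.
\begin{itemize}
\item At a speed jump $s^-\to s^+$ you would need $\Phi_1(\beta_1;G(s^-;U_0))=G(s^+;U_0)$, which is two equations for one unknown. Lemma \ref{lem:strong-wave-interaction-estimate} shows that connecting $G(s_1;U_0)$ to $G(s_2;U_0)$ requires a $2$-wave component $\beta_2$ with $\partial_{s_2}\beta_2\neq 0$. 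So your ``reflected $1$-wave of strength $O(|\Delta s|)$'' does not exist.
\item Likewise, an incoming $\alpha_2\neq0$ at constant prescribed $s$ cannot be absorbed by one outgoing $1$-wave. The target $G(s;U_0)$ lies on the $2$-curve, not the $1$-curve, through the state to the left of $\alpha_2$.
\item Dually, nothing is prescribed at $x=0$, so the forward problem is under-determined there. Your non-reflection choice is an extra condition, and it contradicts your later ``a $1$-wave returns as a $2$-wave'' bookkeeping. With it, nothing forces the $2$-waves reaching the shock to match the prescribed $s$.
\end{itemize}
The reflection-coefficient product and the $\gamma\in(1,3)$ computation belong to the direct piston problem and play no role here.

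The missing idea is to change the marching direction. Rankine--Hugoniot with prescribed $s$ gives full Cauchy data on the non-characteristic curve $x=\chi(t)$, so you must march \emph{away from the shock} toward the piston. The paper advances across the lines $\LL_\tau$ of slope $\lambda^\n\in(s_0,\lambda_2(\overline{U}))$. Along this direction $1$-waves move forward in $t$ and $2$-waves move backward in $t$. Each speed change emits both a $1$-wave and a backward $2$-wave, via the forward--backward Riemann problem of Lemma \ref{lem:inverse-riemann-problem}. All waves are absorbed at $x=0$, where $u_{\rm p}$ is simply read off. There is no reflection anywhere, and $L+K_1Q+K_2P$ is non-increasing with no reflection condition at all.

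This also fixes the role of the Ole\u{\i}nik condition, which your sketch does not pin down. There are no non-physical fronts to control. The real danger is this: when $s$ drops, the shock emits a $1$-shock and a backward $2$-rarefaction, and these may cross earlier same-family fronts issued from the shock. That would create rarefaction fronts of strength bounded below and destroy the entropy inequality in the limit. The paper excludes it as follows.
\begin{itemize}
\item It uses the discrete bound $s_{h+1}-s_h\ge-\Lambda\Delta t/(1+(h-1)\Delta t)$.
\item A front born at time $t$ reaches $x=0$ within time $O(t)$.
\item A Gr\"onwall estimate on the horizontal distance $\DD^*$ then shows that the initial separation, which is $\gtrsim\Delta t$, beats the accumulated drift $O(\Lambda\Delta t)$.
\end{itemize}
This is what keeps every rarefaction front of size $O(\delta)$.
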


\begin{remark}
    The assumption that \(s(t)+\qq(t)\) is monotonically increasing for some \(\qq\) satisfying \eqref{eq:q-condition} 
    is an Ole{\u\i}nik-type entropy condition. Technically, this condition ensures the strength of each backward 2-rarefaction wave 
    fronts tends to {\it zero}, which ensures the limit of the approximate solutions satisfies the entropy inequality; 
    see \cite[\S 7.1 and \S 7.4]{Bressan2000}. 
\end{remark}

For the $1$-D inverse piston problem, when the prescribed initial data are small perturbations of a constant state 
and the prescribed shock trajectory is close to a straight line, 
Li-Wang \cite{Wanglibin2007}, Wang \cite{WangLibin2014}, and Wang-Wang \cite{Wanglibin2019} 
employed the method of characteristics to determine the global piston speed and the corresponding global piecewise smooth flow. 
More recently, Hu-Li-Zhang \cite{hu2025inverse} studied the inverse piston problem in the hypersonic regime, 
allowing for substantially varying leading shock profiles.
Other inverse problems for hyperbolic conservation laws include the identification of initial data for the Burgers equation \cite{allahverdi2016numerical,castro2008alternating,castro2010optimal,colombo2020initial,colombo2023initial,colombo2024initial,colombo2025initial,esteve2020inverse,gosse2017filtered,liard2021initial,liard2023analysis}, 
the reconstruction of obstacle shapes in supersonic flows 
past obstacles \cite{CPZ-2025a,CPZ-2025b,hu2024inverse,li2022inverse,li2006global,wang2011direct}, as well as the characterization 
of attainable sets and stabilization for scalar conservation laws and hyperbolic systems under boundary 
controls \cite{ancona1998,ancona1999,ancona2005,ancona2007}.

In this paper, we develop a modified wavefront tracking algorithm 
to establish the global existence of entropy solutions of the inverse piston problem. 
Assuming that the total variation of the prescribed leading shock speed is sufficiently small, 
the entropy condition enables us to identify a family of lines
$\{\LL_{\tau}\}$ that are orthogonal to the time-marching direction $\bm n_{\LL}$; see \eqref{eq:space-time-direction}. 
We then solve a class of forward-backward Riemann-type problems in the $\bm n_{\LL}$--direction and 
construct approximate solutions up to the boundary: $x=0$. 
At the same time, the approximate piston velocity is determined through the non-reflection boundary condition.

Unlike the corresponding direct problem, the characteristic structure of the inverse problem is mixed: the $1$-characteristics propagate forward, whereas the $2$-characteristics propagate backward. 
In particular, backward waves emanating from the convex portions of the leading shock may exhibit a tendency to converge.
When such a case occurs, the strengths of certain backward rarefaction waves have a strictly positive lower bound. 
As a consequence, the entropy inequality may fail in the limiting process.
We emphasize that this phenomenon does not arise in \cite{yu2017}, where the backward characteristic family is assumed to be linearly degenerate, 
thereby excluding nonlinear focusing effects. 
Consequently, there is no analogous mechanism leading to a breakdown of entropy admissibility in that setting.
A related difficulty also appears in the inverse time-direction problem studied by Glass \cite{glass2007}, in which entropy-admissible solutions are obtained by preventing the intersections of backward characteristics. 
In \cite{glass2007}, this is achieved by freely adjusting two Riemann invariants to control the characteristic geometry. 
In the present problem, however, the situation is substantially more constrained. The two Riemann invariants behind the leading shock 
are coupled through the Rankine-Hugoniot conditions and are uniquely determined by the prescribed shock speed. 
Consequently, the available degrees of freedom are significantly restricted.

To overcome this difficulty, we introduce an Ole{\u\i}nik-type entropy condition requiring that $s+\qq$ 
be monotonically increasing, where $\qq$ is a suitably chosen slowly increasing function; see \eqref{eq:q-condition}. 
This condition imposes a restriction on the curvature of the convex portion of the leading shock profile, 
ensuring that the waves of the same family emanating from the leading shock do not intersect in the first quadrant.
As a consequence, the strength of every backward $2$-rarefaction front remains sufficiently small,
which allows us to continue the construction of approximate solutions in the $\bm n_{\LL}$--direction. 
Moreover, the limiting solution obtained from these approximations satisfies the Lax entropy inequality.

\smallskip
The remainder of this paper is organized as follows:
In \S\ref{sect-wave-curve-riemann-problem}, we review the wave curves associated with \eqref{eq:p-system} 
and analyze a class of forward-backward Riemann-type problems.
In \S\ref{sect-wave-front-tracking}, we approximate the prescribed leading shock by piecewise linear functions 
and construct a family of approximate solutions by using a modified wavefront tracking algorithm.
In \S\ref{sect-extension}, we introduce a Glimm-type functional and establish its monotonicity. 
We further show that the distance between any two wavefronts of the same family emanating from 
the leading shock remains positive in the first quadrant. 
As a consequence, the approximate solutions can be extended globally while maintaining uniform smallness of all rarefaction fronts.
Finally, in \S\ref{sect-proof-main-theorem}, we complete the proof of the main theorem.

\section{Wave Curves and Forward-Backward Riemann-Type Problems}\label{sect-wave-curve-riemann-problem}
In this section, we briefly recall some basic properties of the wave curves associated with 
system \eqref{eq:p-system} and study forward-backward Riemann-type problems. 
We also derive several wave interaction estimates that will be used throughout the paper.

\subsection{Wave curves}
As $p'(v)<0$, system \eqref{eq:p-system} is strictly hyperbolic, whose Jacobian matrix has two distinct eigenvalues
\begin{equation}\label{eq:eigenvalue}
\lambda_1=-\sqrt{-p'(v)},\qquad \lambda_2=\sqrt{-p'(v)},
\end{equation} 
with corresponding eigenvectors
\begin{equation*}
\bm r_1=\frac{2\sqrt{-p'(v)}}{p''(v)}(1,\sqrt{-p'(v)})^\top,\qquad \bm r_2=\frac{2\sqrt{-p'(v)}}{p''(v)}(-1,\sqrt{-p'(v)})^\top,
\end{equation*} 
satisfying
\begin{equation}\label{eq:genuinely-nonlinear}
\bm r_i\cdot\nabla\lambda_i=1\qquad\,\, \mbox{for $i=1,\,2$}.
\end{equation}

Therefore, the $i$-rarefaction curve through a state $U_-$ satisfies 
\begin{equation*}
\frac{\dd u}{\dd v}=(-1)^{i+1}\sqrt{-p'(v)},\qquad u(v_-)=u_-,
\end{equation*}
which gives 
\begin{equation*}
\bm R_i=\big\{(v,u)\,:\, u=u_-+(-1)^{i+1}\int_{v_-}^v\sqrt{-p'(y)}\dd y\big\}\qquad \mbox{for $i=1,\,2$}.
\end{equation*}

Meanwhile, the shock curves $\bm S_i$ through $U_-$ are determined from the Rankine-Hugoniot conditions:
\begin{equation}\label{eq:rh-condition}
\begin{aligned}
&s(v-v_-)={}-(u-u_-),\\
&s(u-u_-)={}p(v)-p(v_-).
\end{aligned} 
\end{equation}
By eliminating $s$ in \eqref{eq:rh-condition}, we obtain
\begin{align}\label{eq:shock-wave}
\begin{aligned}
&\bm S_1={}\big\{(v,u)\,:\, -(u-u_-)^2=(v-v_-)(p(v)-p(v_-)),\,s=-\frac{u-u_-}{v-v_-}<0\big\},\\
&\bm S_2={}\big\{(v,u)\,:\, -(u-u_-)^2=(v-v_-)(p(v)-p(v_-)),\,s=-\frac{u-u_-}{v-v_-}>0\big\}.
\end{aligned} 
\end{align}

\begin{figure}[ht]
\centering  
\includegraphics[width=0.75\textwidth]{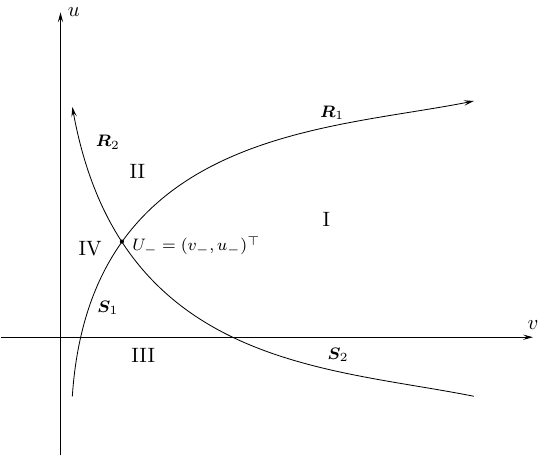}
\caption{Wave curves through $U_-$}
\label{fig:wave-curve}
\end{figure}

As pointed out in \cite[p.~105]{Bressan2000}, by \eqref{eq:genuinely-nonlinear}, the wave curves through $U_-$ (see Fig.~\ref{fig:wave-curve}) are
\begin{equation}\label{eq-wave-curve}
\begin{aligned}
\bm T_1={}&\left\{(v,u)\,:\,\, 
u=\left\{\begin{aligned}
	&u_-+\int_{v_-}^v\sqrt{-p'(y)}\dd y&\,\,\,\, \mbox{ for $v\ge{} v_-$},\\
	&u_--\sqrt{-(v-v_-)(p(v)-p(v_-))}\quad &\mbox{for $v<{} v_-$}
\end{aligned}
\right.\right\},\\[1mm]
\bm T_2={}&\left\{(v,u)\, :\,\, 
u=\left\{\begin{aligned}
	&u_--\int_{v_-}^v\sqrt{-p'(y)}\dd y &\,\,\,\,\mbox{ for $v\le{} v_-$},\\
	&u_--\sqrt{-(v-v_-)(p(v)-p(v_-))}\quad &\mbox{for $v>{}v_-$}
\end{aligned}
\right.\right\}.
\end{aligned} 
\end{equation}

\smallskip
\subsection{Strong shocks and the background solutions}
On the 2-shock, we know from \eqref{eq:shock-wave} that
\begin{equation}\label{eq:shock-speed}
s=\sqrt{-\frac{p(v)-p(v_-)}{v-v_-}},
\end{equation}
which implies that $\frac{\dd s}{\dd v_-}<0$, so that the strong $2$-shock can be parameterized by 
\begin{align*}
&s\mapsto G(s;\,U)\qquad\mbox{for $s>0$},\\
&G(s;\,U)=U_-.
\end{align*}
Then
we have 
\begin{lemma}\label{lem:profile}
Given $s_0>\sqrt{-p'(v_0)}$,
there exists a unique state $\overline{U}=(\overline{v},\overline{u})^\top$ connected with 
the initial state $U_0=(v_0,0)^\top$ by a $2$-shock $s_0$ in the form{\rm :}
\begin{equation*}
G(s_0;\,U_0)=\overline{U}.
\end{equation*}
\end{lemma}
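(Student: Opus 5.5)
We need the state $\overline U$ behind the shock, on the left, with $U_0$ on the right (ahead). The shock moves with speed $s_0>0$ into the gas at rest. With the convention of \eqref{eq:rh-condition}, the shock separates $U_-$ (left) from $U$ (right). So we look for $\overline U=(\overline v,\overline u)$ with
\[
s_0(v_0-\overline v)=-(0-\overline u),\qquad s_0(0-\overline u)=p(v_0)-p(\overline v).
\]
The plan is to solve these two equations explicitly, reducing them to a single scalar equation in $\overline v$. Eliminating $\overline u=s_0(v_0-\overline v)$ gives
\[
s_0^2=-\frac{p(v_0)-p(\overline v)}{v_0-\overline v},
\]
which is exactly \eqref{eq:shock-speed} with the roles $v=v_0$ and $v_-=\overline v$.

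We then define $h(w)\coloneqq -\frac{p(w)-p(v_0)}{w-v_0}$ for $w\in(0,v_0)$ and show that it is a bijection onto $(c_0^2,\infty)$. Because $p$ is strictly convex ($p''>0$), the difference quotient of $p$ is strictly increasing in $w$. Hence $h$ is strictly decreasing, which is the monotonicity $\frac{\dd s}{\dd v_-}<0$ already noted in the paper. For the endpoint limits, $h(w)\to -p'(v_0)=c_0^2$ as $w\to v_0^-$. As $w\to0^+$, we have $p(w)=Aw^{-\gamma}\to\infty$, so $h(w)\to\infty$. Continuity and strict monotonicity then give exactly one $\overline v\in(0,v_0)$ with $h(\overline v)=s_0^2$, since $s_0^2>c_0^2$.

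With $\overline v$ fixed, we set $\overline u=s_0(v_0-\overline v)>0$. Both Rankine--Hugoniot equations then hold, and the root has $s_0>0$, so by \eqref{eq:shock-wave} the state lies on $\bm S_2$. This gives $G(s_0;U_0)=\overline U$.

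It remains to check uniqueness on the other branch, $w>v_0$. There $h(w)<c_0^2$ by the same convexity, so $h(w)=s_0^2$ has no solution. Thus $\overline U$ is unique. We should also confirm the Lax condition $\lambda_2(\overline v)>s_0>\lambda_2(v_0)$, since this picks out the compressive branch. The right inequality is the hypothesis $s_0>c_0$. For the left inequality, convexity gives that the chord slope satisfies $-p'(\overline v)>h(\overline v)=s_0^2$ for $\overline v<v_0$.

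The only delicate point is keeping the orientation of left and right states straight in \eqref{eq:rh-condition}. Once that is fixed, the argument reduces to the monotonicity of the chord slope.
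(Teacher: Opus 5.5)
Your proof is correct and follows the same route as the paper: both reduce the Rankine--Hugoniot conditions to the scalar equation $s_0^2=-\frac{p(v_0)-p(\overline{v})}{v_0-\overline{v}}$ and solve it by a convexity/monotonicity argument. The paper works with $f(v)=Av^{-\gamma}+s_0^2v$ and its unique minimum at $v_{\rm zero}<v_0$, whereas you use the strictly decreasing chord slope $h$; the two are equivalent because $f(w)-f(v_0)=(w-v_0)\big(s_0^2-h(w)\big)$, and your version also makes explicit the limit $h(0^+)=\infty$, the exclusion of $w>v_0$, and the Lax inequality $\lambda_2(\overline{U})>s_0$.
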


\begin{proof}
By \eqref{eq:shock-speed}, it suffices to solve 
\begin{equation*}
s_0=\sqrt{-\frac{p(v_0)-p(\overline{v})}{v_0-\overline{v}}}
\end{equation*}
for $\overline{v}$, which is equivalent to 
\begin{equation*}
A \overline{v}^{-\gamma}+s_0^2\overline{v}-( A v_0^{-\gamma}+s_0^2v_0)=0.
\end{equation*}
Define $f(v)=A v^{-\gamma}+s_0^2v$.
Then $f'(v)= -\gamma A v^{-\gamma-1}+s_0^2$ has a unique zero $v_{\rm zero}$:
\begin{equation*}
v_{\rm zero}=\Big(\frac{s_0^2}{\gamma A}\Big)^{-\frac{1}{\gamma+1}}.
\end{equation*}
Since 
$s_0>\sqrt{-p'(v_0)}$, 
then $v_{\rm zero}<v_0$. 
Moreover,  $f(v)$ attains its minimum at $v=v_{\rm zero}$; see Table~\ref{tab:1}.
Thus, $f(v_{\rm zero})< A v_0^{-\gamma}+s_0^2v_0$, 
and there is a unique point $\overline{v}$ such that $f(\overline{v})= A v_0^{-\gamma}+s_0^2v_0$ on $(0,v_{\rm zero})$. 
This completes the proof.
\end{proof}
\begin{table}[ht] 
\renewcommand\arraystretch{1.5}
\noindent\[
\begin{array}{c|c|c|c}
&{(0,v_{\rm zero})}&{v_{\rm zero}}&{(v_{\rm zero},+\infty)}\\
\hline
{f'(v)}&-&0 &+\\
\hline
{f(v)}&\searrow&(\gamma+1)A(\frac{s_0^2}{\gamma A})^{\frac{\gamma}{\gamma+1}}&\nearrow\\
\end{array}
\]
\caption{The monotonicity of $f(v)$}\label{tab:1}
\end{table}

Moreover, we deduce the following lemma: 
\begin{lemma}\label{lem:derivative-strong-shock}
Denote $(v_-,u_-)=G(s;\,U_0)$. Then
\begin{equation*}
\frac{\dd G(s;\,U_0)}{\dd s}=\frac{v_0-v_-}{s^2+p'(v_-)}\,(2s,\,p'(v_-)-s^2)^\top.
\end{equation*}
\end{lemma}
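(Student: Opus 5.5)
The plan is to differentiate the Rankine--Hugoniot relations \eqref{eq:rh-condition} implicitly with respect to $s$, treating $U_0=(v_0,0)^\top$ as fixed and $(v_-,u_-)=G(s;U_0)$ as unknown functions of $s$. For the $2$-shock joining the left state $U_-$ to the right state $U_0$, the conditions read
\begin{equation*}
s(v_0-v_-)=-(0-u_-)=u_-,\qquad s(0-u_-)=p(v_0)-p(v_-).
\end{equation*}
The first relation gives $u_-=s(v_0-v_-)$ explicitly. Substituting it into the second yields the scalar equation
\begin{equation*}
s^2(v_0-v_-)=p(v_-)-p(v_0).
\end{equation*}
This is exactly the equation solved in Lemma~\ref{lem:profile}, and there the solution $v_-$ lies on the branch $(0,v_{\rm zero})$, where $f'(v_-)<0$.

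Differentiating this scalar equation in $s$ gives
\begin{equation*}
2s(v_0-v_-)-s^2\frac{\dd v_-}{\dd s}=p'(v_-)\frac{\dd v_-}{\dd s}.
\end{equation*}
Hence
\begin{equation*}
\frac{\dd v_-}{\dd s}=\frac{2s(v_0-v_-)}{s^2+p'(v_-)}.
\end{equation*}
This is legitimate by the implicit function theorem, because $s^2+p'(v_-)=f'(v_-)$ with $s$ in place of $s_0$, and this quantity is nonzero (in fact negative) on the branch $v_-<v_{\rm zero}$. This nondegeneracy check is the only point needing care. It follows from the sign table in Table~\ref{tab:1}, or equivalently from the Lax condition $s>\lambda_2(v_-)$, which gives $s^2>-p'(v_-)$ at the left state.

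Next I differentiate $u_-=s(v_0-v_-)$:
\begin{equation*}
\frac{\dd u_-}{\dd s}=(v_0-v_-)-s\frac{\dd v_-}{\dd s}=(v_0-v_-)\Big(1-\frac{2s^2}{s^2+p'(v_-)}\Big)=\frac{(v_0-v_-)(p'(v_-)-s^2)}{s^2+p'(v_-)}.
\end{equation*}
Combining the two components gives the stated formula
\begin{equation*}
\frac{\dd G(s;U_0)}{\dd s}=\frac{v_0-v_-}{s^2+p'(v_-)}\,\big(2s,\;p'(v_-)-s^2\big)^\top.
\end{equation*}
The proof is a short computation. The only substantive point is confirming that the denominator $s^2+p'(v_-)$ does not vanish near $s_0$, which is what makes $G$ differentiable.
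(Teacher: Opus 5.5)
Your computation is correct and is the paper's argument: you implicitly differentiate $s^2(v_0-v_-)=p(v_-)-p(v_0)$ to get $\dd v_-/\dd s$, then use $u_-=s(v_0-v_-)$ to get $\dd u_-/\dd s$, and your check that $s^2+p'(v_-)=f'(v_-)<0$ on $(0,v_{\rm zero})$ is a useful addition that the paper leaves implicit. One slip: for this $2$-shock (left state $U_-$, right state $U_0$) the Lax condition is $\lambda_2(U_0)<s<\lambda_2(U_-)$, i.e.\ $s^2<-p'(v_-)$, which is what makes the denominator negative, so your stated ``$s>\lambda_2(v_-)$, $s^2>-p'(v_-)$'' is reversed and contradicts your own ``in fact negative''; the nondegeneracy still holds through your $f'<0$ argument.
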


\begin{proof}
Differentiating the Rankine–Hugoniot condition \eqref{eq:shock-speed} with respect to $s$, we obtain
\begin{equation*}
2s(v_0-v_-)-s^2(v_-)_s-p'(v_-)(v_-)_s=0,
\end{equation*}
which implies 
\begin{equation*}
(v_-)_s=\frac{2s(v_0-v_-)}{s^2+p'(v_-)}.
\end{equation*}
Recalling that $s=-\frac{u_0-u_-}{v_0-v_-}$, we obtain
\begin{equation*}
(u_-)_s=\frac{(p'(v_-)-s^2)(v_0-v_-)}{s^2+p'(v_-)},
\end{equation*}
which completes the proof.
\end{proof}

To proceed further, we first recall from \eqref{eq:eigenvalue} and \eqref{eq:shock-speed} that 
$\lambda_2(\overline{U})=-\lambda_1(\overline{U})=\sqrt{-p'(\overline{v})}$ and 
\begin{equation*}
s_0=\sqrt{-\frac{p(\overline{v})-p(v_0)}{\overline{v}-v_0}}=\sqrt{-p'(\tilde{v})}
\qquad \text{for some $\overline{v}<\tilde{v}<v_0$}.
\end{equation*}
Since $p''(v)>0$, we see that $\lambda_2(\overline{U})>s_0$. Define
\begin{equation*}
\lambda^\n=\frac{s_0+\lambda_2(\overline{U})}{2},\quad c=\frac{\lambda_2(\overline{U})-s_0}{4},
\quad \check{\lambda}=\frac{s_0}{4}+\frac{3\lambda_2(\overline{U})}{4}.
\end{equation*}
Then there exists $\varepsilon>0$ sufficiently small such that
\begin{equation}\label{eq:separate}
\lambda_2(U)>\lambda_2(\overline{U})-c=\check{\lambda}>\lambda^\n>s_0+c>s
\end{equation}
for $U\in O_\varepsilon(\overline{U})$ and $s\in O_\varepsilon(s_0)$.

Now, for $\tau\ge0$, set 
\begin{equation}\label{eq:space-time-direction}
\LL_\tau=\big\{(x,t)\,:\,x=\lambda^\n\,(t-\tau),\,t\ge\tau\big\},\qquad \bm n_{\LL}=(-1,\lambda^\n)^\top.
\end{equation}

\subsection{Weak waves and forward-backward Riemann-type problems}
For weak waves, we can parameterize two wave curves as 
\begin{align*}
T_1(\sigma)(U_-)=
\left\{\begin{aligned}
&R_1(\sigma)(U_-)& \mbox{for $\sigma\ge 0$},\\
&S_1(\sigma)(U_-)& \mbox{for $\sigma<0$},
\end{aligned}
\right.\qquad\,\,\,
T_2(\sigma)(U_-)=
\left\{\begin{aligned}
&R_2(\sigma)(U_-)& \mbox{for $\sigma\ge 0$},\\
&S_2(\sigma)(U_-)& \mbox{for $\sigma<0$},
\end{aligned}
\right.
\end{align*}
such that 
\begin{equation}\label{eq:parameterization}
\frac{\dd \lambda_i(T_i(\sigma)(U_-))}{\dd\sigma}=1\qquad \mbox{for $i=1,\,2$}.
\end{equation}
With the same parameterization, we define the inverse wave curves as 
\begin{align*}
\mathfrak{T}_1(\sigma)(U_-)=
\left\{\begin{aligned}
&S_1(\sigma)(U_-)& \mbox{for $\sigma\ge 0$},\\
&R_1(\sigma)(U_-)& \mbox{for $\sigma<0$},
\end{aligned}
\right.\qquad
\mathfrak{T}_2(\sigma)(U_-)=
\left\{\begin{aligned}
&S_2(\sigma)(U_-)& \mbox{for $\sigma\ge 0$},\\
&R_2(\sigma)(U_-)& \mbox{for $\sigma<0$}.
\end{aligned}
\right.
\end{align*}
Now, denote 
\begin{align*}
\Phi_i(\alpha_i;\,U_-)=T_i(\alpha_i)(U_-),\quad
\Xi_i(\alpha_i;\,U_-)=\mathfrak{T}_i(\alpha_i)(U_-)
\qquad\,\,\mbox{for $i=1,\,2$}. 
\end{align*}
It follows from \eqref{eq:parameterization} that 
\begin{equation}\label{eq:invertible}
U_-=\Xi_i(-\alpha_i;\,\Phi_i(\alpha_i;\,U_-))\qquad\mbox{for $i=1,\,2$},
\end{equation} 
and 
\begin{equation}\label{eq:wave-curve-parameterization-derivative}
\left.\frac{\dd \Phi_i(\alpha_i;\,U_-)}{\dd\alpha_i}\right|_{\alpha_i=0}
= \left.\frac{\dd \Xi_i(\alpha_i;\,U_-)}{\dd\alpha_i}\right|_{\alpha_i=0}=\bm r_i(U_-)
\qquad\mbox{for $i=1,\,2$}.
\end{equation}
Let
\begin{equation*}
\Phi(\bm \alpha;\,U_-)=\Phi_2(\alpha_2;\,\Phi_1(\alpha_1;\,U_-))
\end{equation*}
be the composition of two families of wave curves with $\bm \alpha=(\alpha_1,\,\alpha_2)$. 

Next, we solve forward-backward Riemann-type problems. 
By a forward-backward Riemann-type problem, we mean the Cauchy problem with initial data of the form:
\begin{align}\label{eq:inverse-riemann-data}
U(x,t)=\left\{
\begin{aligned}
&U_b=(v_b,u_b)^\top \quad& \mbox{for $x=\lambda^\n(t-\bar{t})+\bar{x}$ and $t<\bar{t}$},\\
&U_a=(v_a,u_a)^\top \quad& \mbox{for $x=\lambda^\n(t-\bar{t})+\bar{x}$ and $t>\bar{t}$},
\end{aligned}
\right.
\end{align}
whose solvability is guaranteed by the following lemma:
\begin{figure}[ht]
\centering
\includegraphics[width=0.45\textwidth]{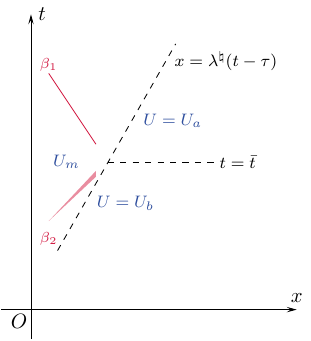}             
\caption{Forward-backward Riemann-type problems} 
\label{fig:forward-backward-riemann-problem}
\end{figure}

\begin{lemma}\label{lem:inverse-riemann-problem}
There exists $\epsilon_0$ sufficiently small such that, if $U_a$, $U_b\in O_{\epsilon_0}(\overline{U})$, then the equation{\rm :} 
\begin{equation}\label{eq:inverse-riemann-problem}
\Phi_1(\beta_1;\,\Xi_2(-\beta_2;\,U_b))=U_a
\end{equation}
has a unique solution $\bm \beta=(\beta_1,\beta_2)$ with
\begin{equation}\label{eq:wave-strength-control}
|\beta_1|+|\beta_2|\le O(1)|U_a-U_b|,
\end{equation}
where $O(1)$ is a bounded quantity depending only on $\overline{U}$ and system \eqref{eq:p-system}{\rm ;}
see {\rm Fig.~\ref{fig:forward-backward-riemann-problem}}.
\end{lemma}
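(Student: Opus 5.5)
The plan is to apply the implicit function theorem to
\[
\mathcal{F}(\bm\beta;\,U_b,U_a):=\Phi_1\big(\beta_1;\,\Xi_2(-\beta_2;\,U_b)\big)-U_a ,
\]
viewed as a map from a neighbourhood of $(\bm 0;\,\overline{U},\overline{U})$ in $\mathbb{R}^2\times\mathbb{R}^2\times\mathbb{R}^2$ to $\mathbb{R}^2$. At the base point we have $\mathcal{F}(\bm 0;\overline{U},\overline{U})=\Phi_1(0;\Xi_2(0;\overline{U}))-\overline{U}=0$. Two things must be checked: that $\mathcal{F}$ has enough regularity, and that $D_{\bm\beta}\mathcal{F}$ is nondegenerate at the base point.

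For regularity, we use the standard fact (see Bressan \cite{Bressan2000}) that, for a genuinely nonlinear family, the shock and rarefaction branches of each wave curve have second-order tangency at $\sigma=0$. Consequently $\sigma\mapsto T_i(\sigma)(U_-)$ is $C^{2}$ (with Lipschitz second derivative) jointly in $(\sigma,U_-)$. The inverse curves $\mathfrak{T}_i$ consist of the same two branches swapped in the sign of the parameter. By \eqref{eq:invertible}, $\mathfrak{T}_i(\sigma)(U_-)$ is the inverse map of the $C^2$ family $T_i$, so $\Xi_i$ is also at least $C^1$, indeed $C^{2}$, jointly in $(\sigma,U_-)$. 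Hence $\mathcal{F}$ is $C^1$ near the base point.

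For the Jacobian, \eqref{eq:wave-curve-parameterization-derivative} gives
\[
D_{\bm\beta}\mathcal{F}(\bm 0;\overline{U},\overline{U})=\big(\bm r_1(\overline{U}),\,-\bm r_2(\overline{U})\big).
\]
Up to nonzero scalar factors this matrix has columns $(1,\sqrt{-p'(\overline v)})^\top$ and $(1,-\sqrt{-p'(\overline v)})^\top$. Its determinant is a nonzero multiple of $-2\sqrt{-p'(\overline v)}\neq0$ by strict hyperbolicity. The implicit function theorem then yields $\epsilon_0>0$, a neighbourhood $N$ of $\bm 0$, and a $C^1$ map $(U_b,U_a)\mapsto\bm\beta(U_b,U_a)$ on $O_{\epsilon_0}(\overline U)^2$. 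This map is the unique solution of \eqref{eq:inverse-riemann-problem} in $N$, and uniqueness is understood within the class of small waves, as is standard.

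For the estimate \eqref{eq:wave-strength-control}, note first that $\bm\beta(U,U)=\bm 0$ for every $U$, because $\bm\beta=\bm 0$ solves the equation and the solution is unique. Since $\bm\beta$ is $C^1$ with derivative bounded on the (shrunken, convex) neighbourhood, the mean value theorem applied in the $U_a$ variable gives
\[
|\bm\beta(U_b,U_a)|=|\bm\beta(U_b,U_a)-\bm\beta(U_b,U_b)|\le \sup|D_{U_a}\bm\beta|\,|U_a-U_b|.
\]
The constant depends only on $\overline U$ and $p$.

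The main obstacle I expect is the regularity of $\Xi_2$ across $\sigma=0$, because the inverse curve switches branches there. I would handle it via \eqref{eq:invertible} as above, or more directly by observing that the rarefaction and shock branches through a common point agree to second order, so the glued curve is $C^1$ in both arguments. That level of regularity is all the implicit function theorem needs.
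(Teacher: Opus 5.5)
Your proposal is correct and follows the paper's proof: you apply the implicit function theorem at $\bm\beta=\bm 0$, $U_b=U_a=\overline{U}$, using the same nonsingular Jacobian $(\bm r_1(\overline{U}),\,-\bm r_2(\overline{U}))$. The bound $|\beta_1|+|\beta_2|\le O(1)|U_a-U_b|$ is obtained slightly differently but equivalently: the paper writes $U_a-U_b$ as an averaged-derivative matrix close to $(\bm r_1,-\bm r_2)$ applied to $\bm\beta$ and inverts it, whereas you use $\bm\beta(U,U)=\bm 0$ and the mean value inequality for the implicit function, and your justification of the regularity of $\Xi_2$ across $\sigma=0$ fills in a point the paper leaves implicit.
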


\begin{proof}
Differentiating both sides of equation \eqref{eq:inverse-riemann-problem}
with respect to $\bm \beta$ and using \eqref{eq:invertible}--\eqref{eq:wave-curve-parameterization-derivative}, we obtain
\begin{equation*}
\left.\frac{\partial}{\partial \bm \beta}\Phi_1(\beta_1;\,\Xi_2(-\beta_2;\,U_b))\right|_{U_b=\overline{U},\,\bm \beta=\bm 0}=(\bm r_1(\overline{U}),\,-\bm r_2(\overline{U})),
\end{equation*}
which is nonsingular. 
Then the implicit function theorem (see, for example, \cite[Theorem 2.2]{Bressan2000}) 
implies the existence of $\bm \beta$. 
Finally, \eqref{eq:wave-strength-control} is guaranteed by the fact that 
\begin{equation*}
U_a-U_b=-\beta_2\,\int_0^1\frac{\partial \Xi_2}{\partial\beta_2}(-\theta\beta_2;\,U_b)\,\dd\theta
+\beta_1\,\int_0^1\frac{\partial \Phi_1}{\partial\beta_1}(\theta\beta_1;\,\Xi_2(-\beta_2;\;U_b))\,\dd\theta.
\end{equation*}
This completes the proof.
\end{proof}

\subsection{Wave interaction estimates}
In this subsection, we derive some wave interaction estimates when solving the forward-backward Riemann-type problems.
\begin{figure}[ht]
\centering 
\includegraphics[width=0.45\textwidth]{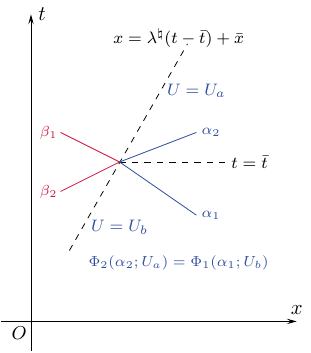} 
\caption{Wave interaction} 
\label{fig:wave-interaction}
\end{figure}

\begin{lemma}\label{lem:wave-interaction-estimate-1}
Suppose that $U_a$, $U_b\in O_{\epsilon_0}(\overline{U})$ with 
\begin{equation*}
\Phi_1(\alpha_1;\;U_b)=\Phi_2(\alpha_2;\;U_a).
\end{equation*}
Then there exists $\bm \beta=(\beta_1,\beta_2)$ such that
\begin{equation*}
\Phi_1(\beta_1;\;\Xi_2(-\beta_2;\,U_b))=U_a
\end{equation*}
with 
\begin{equation*}
\beta_i=\alpha_i+O(1)|\alpha_1||\alpha_2|\qquad\mbox{for $i=1,\,2$},
\end{equation*}
where $O(1)$ is a bounded quantity depending only on $\overline{U}$ and the system \eqref{eq:p-system}{\rm ;}
see {\rm Fig.~\ref{fig:wave-interaction}}.
\end{lemma}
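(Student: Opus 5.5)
The plan is the standard Glimm-type interaction argument: existence from Lemma~\ref{lem:inverse-riemann-problem}, and the quadratic estimate from the observation that the error vanishes when either incoming wave is absent. The setting is as follows. The 1-wave $\alpha_1$ issues from $U_b$, and a 2-wave $\alpha_2$ joins $U_a$ to the same intermediate state $U_m:=\Phi_1(\alpha_1;U_b)=\Phi_2(\alpha_2;U_a)$. Since $U_a,U_b\in O_{\epsilon_0}(\overline{U})$, Lemma~\ref{lem:inverse-riemann-problem} gives a unique $\bm\beta$ with $\Phi_1(\beta_1;\Xi_2(-\beta_2;U_b))=U_a$, provided $\epsilon_0$ is small. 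It also gives $|\beta_1|+|\beta_2|=O(1)|U_a-U_b|=O(1)(|\alpha_1|+|\alpha_2|)$. So existence is immediate, and only the quadratic estimate needs work.

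To set up that estimate, use the smooth independent variables $(\alpha_1,\alpha_2,U_b)$. By the implicit function theorem, $\alpha_1,\alpha_2$ and $U_b$ (near $(0,0,\overline{U})$) determine $U_a$ uniquely through $\Phi_2(\alpha_2;U_a)=\Phi_1(\alpha_1;U_b)$. Equivalently, $U_a=\Xi_2(-\alpha_2;U_m)$ by \eqref{eq:invertible}. Hence $\bm\beta=\bm\beta(\alpha_1,\alpha_2;U_b)$ is a composition of the wave-curve maps with the implicit solution map of Lemma~\ref{lem:inverse-riemann-problem}. These curves are $C^2$ with bounded second derivatives near $\overline{U}$, since the shock and rarefaction branches of $T_i$ and $\mathfrak T_i$ have second-order tangency at $\sigma=0$. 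So $\bm\beta$ is $C^{1,1}$ in $(\alpha_1,\alpha_2)$, uniformly in $U_b$.

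The two degenerate cases are as follows.
\begin{itemize}
\item If $\alpha_2=0$, then $U_a=\Phi_1(\alpha_1;U_b)$. Then $\bm\beta=(\alpha_1,0)$ solves \eqref{eq:inverse-riemann-problem}, because $\Xi_2(0;U_b)=U_b$, and by uniqueness it is the solution.
\item If $\alpha_1=0$, then $U_m=U_b$ and $U_a=\Xi_2(-\alpha_2;U_b)$. Then $\bm\beta=(0,\alpha_2)$ solves it, and again uniqueness identifies it.
\end{itemize}
Thus $F(\alpha_1,\alpha_2):=\bm\beta-(\alpha_1,\alpha_2)$ vanishes on both coordinate axes.

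The standard lemma (cf.\ \cite[Lemma~2.5]{Bressan2000}) now finishes the proof. It says that a $C^{1,1}$ (or $C^2$) function vanishing on both axes satisfies
\[
F(\alpha_1,\alpha_2)=\int_0^1\!\!\int_0^1\partial^2_{\alpha_1\alpha_2}F(\theta\alpha_1,\vartheta\alpha_2)\,\dd\theta\,\dd\vartheta\;\alpha_1\alpha_2 .
\]
This gives $|F|\le O(1)|\alpha_1||\alpha_2|$, that is, $\beta_i=\alpha_i+O(1)|\alpha_1||\alpha_2|$.

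The main obstacle is the regularity needed for the mixed derivative $\partial^2_{\alpha_1\alpha_2}F$ to be bounded. The inverse curves $\Xi_2$ switch between the shock and rarefaction branches at $\sigma=0$, so they are only $C^2$ there, and not smoother. If only Lipschitz second derivatives are available, I would avoid the mixed derivative and argue directly. Write $F(\alpha_1,\alpha_2)=F(\alpha_1,\alpha_2)-F(\alpha_1,0)-F(0,\alpha_2)+F(0,0)$ and bound it by $\sup|\partial_{\alpha_2}F(\cdot,\cdot)-\partial_{\alpha_2}F(0,\cdot)|\,|\alpha_2|$. Then use the Lipschitz continuity in $\alpha_1$ of the first derivative $\partial_{\alpha_2}F$, which follows from the $C^{1,1}$ regularity of all the maps involved. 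This still yields $O(1)|\alpha_1||\alpha_2|$, with the constant depending only on $\overline{U}$ and \eqref{eq:p-system}.
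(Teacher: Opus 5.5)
Your proposal is correct and follows essentially the same route as the paper. Existence comes from Lemma~\ref{lem:inverse-riemann-problem}. Then $\bm\beta(\alpha_1,0)=(\alpha_1,0)$ and $\bm\beta(0,\alpha_2)=(0,\alpha_2)$ are identified via \eqref{eq:invertible} and uniqueness. Finally, the quadratic lemma (Lemma~\ref{lem:quadratic}, i.e.\ Bressan's Lemma~2.5) yields the $O(1)|\alpha_1||\alpha_2|$ error. Your extra attention to the $C^{2}$/Lipschitz regularity of the wave curves and to uniformity in $U_b$ only makes explicit what the paper leaves implicit.
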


\begin{proof}
The existence of $\bm \beta$
follows from Lemma \ref{lem:inverse-riemann-problem}. Furthermore, the assumption and \eqref{eq:invertible} imply that
\begin{equation*}
\Xi_2(-\alpha_2;\,\Phi_1(\alpha_1;\,U_b))=U_a,
\end{equation*}
which leads to 
\begin{equation*}
\Xi_2(-\alpha_2;\,\Phi_1(\alpha_1;\,U_b))-\Phi_1(\beta_1;\,\Xi_2(-\beta_2;\,U_b))=\bm 0.
\end{equation*}
From the proof of Lemma \ref{lem:inverse-riemann-problem}, there is a unique $\bm \beta$ such that
\begin{equation*}
\beta_i=\beta_i(\bm\alpha)\qquad\mbox{for $i=1,\,2$}.
\end{equation*} 
In particular, we have
\begin{align*}
\beta_1(0,\alpha_2)={}&0,&\beta_2(0,\alpha_2)={}&\alpha_2,\\
\beta_1(\alpha_1,0)={}&\alpha_1,&\beta_2(\alpha_1,0)={}&0.
\end{align*} 
Thus, by Lemma \ref{lem:quadratic}, we obtain
\begin{align*}
\beta_1={}&\beta_1(0,\alpha_2)+\beta_1(\alpha_1,0)-\beta_1(0,0)+O(1)|\alpha_1||\alpha_2|\\
={}&\alpha_1+O(1)|\alpha_1||\alpha_2|,\\
\beta_2={}&\beta_2(0,\alpha_2)+\beta_2(\alpha_1,0)-\beta_2(0,0)+O(1)|\alpha_1||\alpha_2|\\
={}&\alpha_2+O(1)|\alpha_1||\alpha_2|.
\end{align*} 
This completes the proof.
\end{proof}

The state behind the leading shock is controlled by the shock speed, which is summarized as the following lemma:  

\begin{figure}[ht]
\centering
\begin{subfigure}[b]{0.45\textwidth} 
\centering
\includegraphics[width=\textwidth]{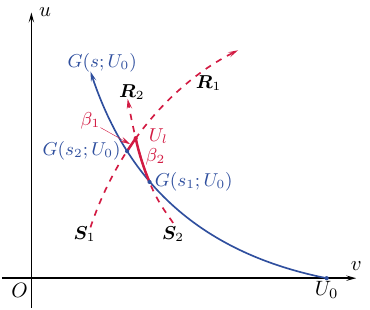} 
\caption{Case 1: $s_2>s_1$} 
\label{fig:strong-shock-interaction-1}
\end{subfigure}
\hfill 
\begin{subfigure}[b]{0.45\textwidth} 
\centering
\includegraphics[width=\textwidth]{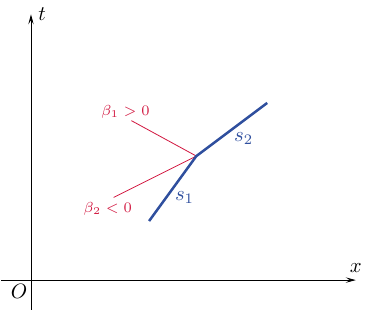} 
\caption{Case 1: $s_2>s_1$} 
\label{fig:strong-shock-interaction-3}
\end{subfigure}
\vspace{1cm}
\begin{subfigure}[b]{0.45\textwidth}
\centering
\includegraphics[width=\textwidth]{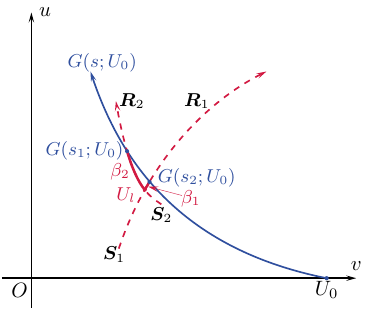} 
\caption{Case 2: $s_2<s_1$} 
\label{fig:strong-shock-interaction-2}
\end{subfigure}
\hfill 
\begin{subfigure}[b]{0.45\textwidth} 
\centering
\includegraphics[width=\textwidth]{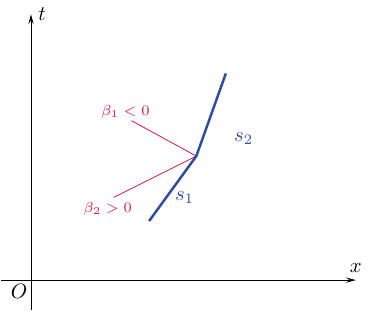} 
\caption{Case 2: $s_2<s_1$} 
\label{fig:strong-shock-interaction-4}
\end{subfigure}
\caption{Leading shock generates weak waves} 
\end{figure}

\begin{lemma}\label{lem:strong-wave-interaction-estimate}
There exists $\epsilon_s>0$ such that, for $s_1$, $s_2\in O_{\epsilon_s}(s_0)$,
\begin{align*}
&G(s_1;\,U_0),\,G(s_2;\,U_0)\in O_{\epsilon_0}(\overline{U}),\\[1mm]
& |G(s_1;\,U_0)-G(s_2;\,U_0)|= O(1)|s_1-s_2|,
\end{align*}
where $O(1)$ is a bounded quantity depending only on $\overline{U}$ and the system \eqref{eq:p-system}. 
Moreover, denoting $U_l=\Xi_2(-\beta_2;\,G(s_1;\,U_0))$, there exists $\bm \beta=(\beta_1,\beta_2)$ such that
\begin{equation}\label{eq:strong-wave-interaction-estimate}
\Phi_1(\beta_1;\,U_l)=G(s_2;\,U_0),
\end{equation}
and, in particular, 
\begin{enumerate}
\item[\rm (i)]  $\beta_1>0$ and $\beta_2<0$  when $s_1<s_2$ {\rm (}see {\rm Fig.~\ref{fig:strong-shock-interaction-1}--\ref{fig:strong-shock-interaction-3}}{\rm );}
\item[\rm (ii)]   $\beta_1<0$ and $\beta_2>0$ when $s_1>s_2$ {\rm (}see {\rm Fig.~\ref{fig:strong-shock-interaction-2}--\ref{fig:strong-shock-interaction-4}}{\rm ).}
\end{enumerate}
\end{lemma}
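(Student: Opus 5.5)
The plan has three parts: the first two claims come from smoothness of $G$, and the signs come from a linearization at $\bm\beta=\bm 0$.

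\emph{Smoothness and the first two claims.} By Lemma~\ref{lem:profile}, $G(s_0;U_0)=\overline{U}$ with $\overline{v}<v_{\rm zero}$. There $f'(\overline{v})<0$, i.e. $s_0^2+p'(\overline{v})<0$. So the denominator in Lemma~\ref{lem:derivative-strong-shock} does not vanish near $s_0$, and the implicit function theorem shows that $s\mapsto G(s;U_0)$ is $C^1$ (indeed smooth) near $s_0$, with derivative given by Lemma~\ref{lem:derivative-strong-shock}. Continuity at $s_0$ then gives $\epsilon_s$ with $G(s_i;U_0)\in O_{\epsilon_0}(\overline{U})$. The mean value theorem gives $|G(s_1;U_0)-G(s_2;U_0)|\le O(1)|s_1-s_2|$. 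The reverse inequality also holds, since $\frac{\dd G}{\dd s}(s_0)\neq \bm 0$; this justifies writing $=O(1)|s_1-s_2|$.

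\emph{Existence of $\bm\beta$.} Apply Lemma~\ref{lem:inverse-riemann-problem} with $U_b=G(s_1;U_0)$ and $U_a=G(s_2;U_0)$. This gives a unique $\bm\beta$ with $|\bm\beta|\le O(1)|s_1-s_2|$. The resulting map $\bm\beta=\bm\beta(s_1,s_2)$ is $C^1$ (the inverse wave curves are $C^2$ at $0$), and $\bm\beta(s_1,s_1)=\bm 0$.

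\emph{Signs.} I will linearize: $\bm\beta=\big(\partial_{s_2}\bm\beta\big)|_{(s_0,s_0)}(s_2-s_1)+o(|s_2-s_1|)$, uniformly for $s_1,s_2$ near $s_0$. Differentiating \eqref{eq:inverse-riemann-problem} gives
\begin{equation*}
\beta_1\bm r_1(\overline{U})-\beta_2\bm r_2(\overline{U})\approx \frac{\dd G}{\dd s}(s_0)\,(s_2-s_1).
\end{equation*}
Write $\bm r_1=k(1,c)$ and $\bm r_2=k(-1,c)$ with $c=\sqrt{-p'(\overline v)}$ and $k=2c/p''(\overline v)>0$. Also write $\frac{\dd G}{\dd s}=m\,(2s_0,\;p'(\overline v)-s_0^2)$ with
\begin{equation*}
m=\frac{v_0-\overline v}{s_0^2+p'(\overline v)}<0.
\end{equation*}
Solving the $2\times2$ system gives
\begin{equation*}
k(\beta_1+\beta_2)=2ms_0\,\delta,\qquad kc(\beta_1-\beta_2)=m\,(p'(\overline v)-s_0^2)\,\delta,\qquad \delta=s_2-s_1.
\end{equation*}
Since $p'(\overline v)=-c^2$, we obtain
\begin{align*}
\beta_1&=\frac{m\,\delta}{2kc}\,\big(2s_0c-c^2-s_0^2\big)=-\frac{m\,(c-s_0)^2}{2kc}\,\delta,\\
\beta_2&=\frac{m\,\delta}{2kc}\,\big(2s_0c+c^2+s_0^2\big)=\frac{m\,(c+s_0)^2}{2kc}\,\delta.
\end{align*}
Because $m<0$ and $c=\lambda_2(\overline U)>s_0$, this gives $\beta_1\sim+\delta$ and $\beta_2\sim-\delta$, each with a coefficient bounded away from zero. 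The remainder is $o(|\delta|)$, so after shrinking $\epsilon_s$ the signs are exactly those in (i) and (ii).

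\emph{Main obstacle.} The delicate step is making the linearization uniform, i.e. controlling the $o(|\delta|)$ remainder. Lemma~\ref{lem:inverse-riemann-problem} only gives a $C^1$ dependence, so I will apply the mean value theorem to $\bm\beta(s_1,\cdot)$ on $[s_1,s_2]$. Continuity of $\partial_{s_2}\bm\beta$ then keeps its components near the values computed at $(s_0,s_0)$, which are nonzero and have fixed signs. This settles the signs without any second-order expansion.
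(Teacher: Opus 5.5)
Your proposal is correct and takes essentially the same route as the paper. You differentiate \eqref{eq:strong-wave-interaction-estimate} in $s_2$ at $s_1=s_2=s_0$ and solve the $2\times 2$ system against $(\bm r_1(\overline U),-\bm r_2(\overline U))$; your coefficients $-\frac{m(c-s_0)^2}{2kc}$ and $\frac{m(c+s_0)^2}{2kc}$ coincide with the paper's Cramer's-rule expressions (where $s_0$ is written as $\sqrt{-p'(\zeta)}$), and $\lambda_2(\overline U)>s_0$ then fixes the signs before shrinking $\epsilon_s$. Your implicit-function-theorem argument for the first part and your mean-value-theorem argument for uniformity simply fill in details that the paper leaves as ``direct'' and ``provided that $\epsilon_s$ is sufficiently small''.
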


\begin{proof}
The proof of the first part is direct, so we omit its details here. 
For the second part, taking the derivative of \eqref{eq:strong-wave-interaction-estimate}
with respect to $s_2$ 
and evaluating at $s_1=s_2=s_0$, we obtain
\begin{equation*}
\frac{\partial\beta_1}{\partial s_2}{\bm r}_1(\overline{U})-\frac{\partial\beta_2}{\partial s_2}{\bm r}_2(\overline{U})= G'(s_0;\,U_0).
\end{equation*}
The mean value theorem implies that $s_0=\sqrt{-p'(\zeta)}$ for $\overline{v}<\zeta<v_0$. 
Then Cramer's rule gives
\begin{align*}
\left. \frac{\partial\beta_1}{\partial s_2}\right|_{s_2=s_0}={}&\frac{(\overline{v}-v_0)p''(\overline{v})\big(\sqrt{-p'(\overline{v})}
-\sqrt{-p'(\zeta)}\big)}{4p'(\overline{v})\big(\sqrt{-p'(\overline{v})}+\sqrt{-p'(\zeta)}\big)}>0,\\
\left. \frac{\partial\beta_2}{\partial s_2}\right|_{s_2=s_0}={}&-\frac{(\overline{v}-v_0)p''(\overline{v})\big(\sqrt{-p'(\overline{v})}
+\sqrt{-p'(\zeta)}\big)}{4p'(\overline{v})\big(\sqrt{-p'(\overline{v})}-\sqrt{-p'(\zeta)}\big)}<0.
\end{align*}
This completes the proof, provided that $\epsilon_s$ is sufficiently small.
\end{proof}

\section{A Modified Wavefront Tracking Algorithm}\label{sect-wave-front-tracking}
In this section, we develop a modified wavefront tracking algorithm and construct approximate solutions 
for the inverse problem \eqref{eq:p-system}--\eqref{BoundaryC}. 

To construct the approximate solution $U^{\mu,\Delta t}(x,t)$ and the approximate piston speed $u_{\rm p}^{\mu,\Delta t}$ in the $\bm n_{\LL}$--direction, 
we solve the forward-backward Riemann-type problems and apply the boundary condition \eqref{BoundaryC} iteratively.

Define
\begin{equation*}
B(\bm n_{\LL},r;\,(\bar{x},\bar{t})):=O_r((\bar x,\bar t))\cap\{(x,t)\,:\,(x-\bar x,t-\bar t)\cdot\bm n_{\LL}>0\}.
\end{equation*}
We refer to $B(\bm n_{\LL},r;\,(\bar{x},\bar{t}))$ as the $\bm n_{\LL}$-forward neighborhood of $(\bar x,\bar t)$, 
where $\bm n_{\LL}$ is defined in \eqref{eq:space-time-direction}.

The approximate solutions are piecewise constant vector-valued functions whose discontinuities are
represented by wavefronts. 
Each wavefront propagates in the $\bm n_{\LL}$--direction until it interacts either with another wavefront or 
with the approximate piston trajectory $\{(0,t)\,:\,t\geq 0\}$. At each interaction point, a new forward-backward Riemann-type problem 
is generated.

To resolve these subsequent Riemann problems, we adopt the accurate Riemann solvers ({\it cf.} \cite{Bressan2000}). 
These solvers accommodate both shock waves and rarefaction fronts.

Let $\delta = \delta(\Delta t) > 0$ denote a parameter (to be specified later) exceeding the maximum rarefaction front strength. 

Assume that $\Phi_1(\beta_1;\,\Xi_2(-\beta_2;\,U_b))=U_a$ with $\beta_2<\delta$. 
Then, in an $\bm n_{\LL}$-forward neighborhood of $(\bar x,\bar t)$, the approximate solution to the forward-backward Riemann-type 
problem with initial data \eqref{eq:inverse-riemann-data} has the following explicit formula:
\begin{align}
U^{\delta}_{\rm Rie}(x,t)=\left\{\begin{aligned}
&U_{m_0}\ &&\mbox{for $\lambda^\n<\xi<\sigma_1$},\\
&U_{m_1}\ &&\mbox{for $\sigma_1<\xi$},\\
&U_{m_1}\ &&\mbox{for $\xi<\sigma_2^-$},\\
&\Phi_1\left(\xi-\lambda_1(U_{m_1});\,U_{m_1}\right)\ &&\mbox{for $\sigma_2^-<\xi<\sigma_2^+$},\\
&U_{m_2}\ &&\mbox{for $\sigma_2^+<\xi<\lambda^\n$},
\end{aligned}
\right.\label{eq:self-similar-riemann-solution}
\end{align}
where
\begin{equation*}
\xi=\frac{x-\bar x}{t-\bar t},\quad U_{m_0}=U_b,\quad U_{m_1}=\Xi_2(-\beta_2;\,U_b),\quad
U_{m_2}=U_a,
\end{equation*}
and 
\begin{align*}
\sigma_1={}&\left\{
\begin{aligned}
&\,\text{the speed of the weak shock $\beta_2$ $\,\,\,\,$ when $\beta_2<0$},\\
&\,\lambda_2(U_{m_0}) \qquad\qquad\qquad\qquad\qquad\quad \text{ when $\beta_2>0$},
\end{aligned}
\right.&&\\
\sigma_2^+={}&\sigma_2^-=\sigma_2,\quad \sigma_2\text{ is the speed of the weak shock $\beta_1$ $\,\,\,\,$
when $\beta_1<0$},&&\\
\sigma_2^-={}&\lambda_1(U_{m_{1}}),\quad \sigma_2^+=\lambda_1(U_{m_{2}})
\qquad\qquad\qquad\qquad\qquad\quad\, \text{when $\beta_1>0$}.&&
\end{align*}

\subsection{Accurate Riemann solver} In this subsection, we introduce the accurate Riemann solvers, 
which are used as the building blocks to construct approximate solutions.

The accurate Riemann solver is an approximate solution of the forward-backward Riemann-type problem \eqref{eq:inverse-riemann-data}, 
in which each rarefaction wave is partitioned into piecewise constant states separated by rarefaction fronts. 
Specifically, consider a $1$-wave $\beta_1$, originating at $(\bar x,\bar t)$, 
that connects two constant states $U_{m_1}$ and $U_{m_2}$. 
Taking the smallest $n\in\mathbb{N}_+$ such that $\beta_1 < n\delta$, we then define
\begin{equation}\label{eq-accurate-riemann-solver}
U^\delta_{1,\beta_1}(x,t)\triangleq
U_{m_{1}}+\sum_{j=1}^n(U_{m_{1},j}-U_{m_{1},j-1})H(x-\bar x-(t-\bar t)\,\lambda_1(U_{m_{1},j-1})),
\end{equation}
where  $H$ is the Heaviside function and
\begin{equation*}
U_{m_{1},j}=\Phi_1(\frac{j\beta_1}{n};\,U_{m_1})\qquad \mbox{for $j=0,1,\cdots,n$};
\end{equation*}
see Fig.~\ref{fig:accurate-riemann-solver}.
\begin{figure}[ht]
\centering
\begin{subfigure}[b]{0.45\textwidth} 
\centering
\includegraphics[width=\textwidth]{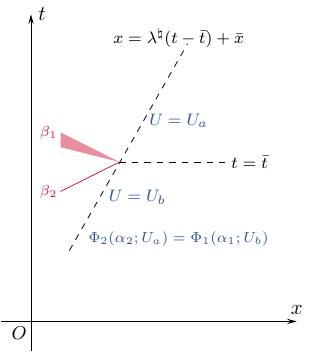}
\caption{Riemann solver} 
\end{subfigure}
\hfill 
\begin{subfigure}[b]{0.45\textwidth} 
\centering
\includegraphics[width=\textwidth]{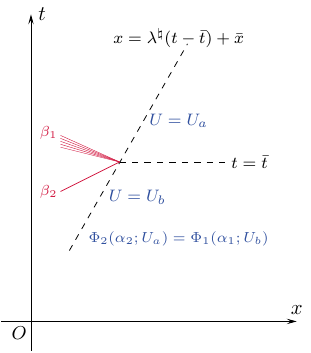}
\caption{Accurate Riemann solver} 
\end{subfigure}
\caption{Accurate Riemann solver} 
\label{fig:accurate-riemann-solver}
\end{figure}

To obtain an accurate Riemann solver $U^\delta_{\rm A}(U_{b},U_{a};\,x,t)$,
we use $U^\delta_{1,\beta_1}(x,t)$ to substitute
$\Phi_1\left(\xi-\lambda_1(U_{m_1});\,U_{m_1}\right)$ in the approximate 
solution \eqref{eq:self-similar-riemann-solution} when $(x,t)$ belongs to the rarefaction region
$\sigma_2^-(t-\bar t)<x-\bar x<\sigma_2^+(t-\bar t)$.

\subsection{Construction of the approximate solutions}
We now define the approximate solutions inductively. 
For simplicity of notation, in this section, we omit the superscript $\delta$ 
and assume that $\|s(\cdot)-s_0\|_{L^{\infty} \cap BV}$ is sufficiently small 
to satisfy the conditions of Lemma \ref{lem:strong-wave-interaction-estimate}. 

We first approximate the leading shock by piecewise linear functions. 
For any $\Delta t>0$, there exists $N_{\Delta t}>0$ such that 
\begin{equation*}
\text{TV}\{s\,:\, [N_{\Delta t}\,\Delta t,\,\infty)\}<\Delta t.
\end{equation*}

\begin{figure}[ht]
\centering  
\includegraphics[width=0.6\textwidth]{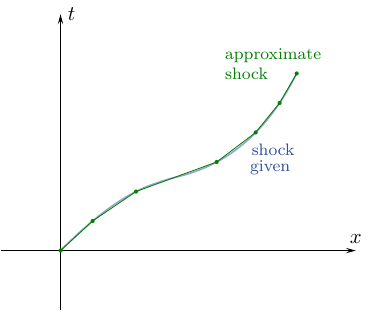}
\caption{Leading shock approximation}
\label{fig:leading-shock-approximation}
\end{figure}

We connect the points $\{(\chi(h\Delta t), h\Delta t)\,:\,\,h=0,\,1,\,2,\,\cdots,\,N_{\Delta t}\}$ by line segments 
to obtain a piecewise linear function $\chi^{\Delta t}$; 
see Fig.~\ref{fig:leading-shock-approximation}. 
Then the speed of the approximate leading shock is
\begin{equation}\label{eq-approximate-shock-speed}
\begin{aligned}
s^{\Delta t}(t)={}&s_{h}^{\Delta t}&&\,\,\,\mbox{for $t\in((h-1)\Delta t,\,h\Delta t]$ and $h=1,\,2,\,\cdots,\,N_{\Delta t}$},\\
s^{\Delta t}(t)={}&s(N_{\Delta t}\Delta t)&&\,\,\,\mbox{for $t\in (N_{\Delta t}\Delta t,\,\infty)$},
\end{aligned}  
\end{equation}
where 
\begin{align*}
s_{h}^{\Delta t}=\left\{
\begin{aligned}
&\frac{1}{\Delta t}\int_{(h-1)\Delta t}^{h\Delta t}s(r)\,\dd r\qquad \mbox{for $h=1,\,2,\,\cdots,\,N_{\Delta t}$},\\
&s(N_{\Delta t}\, \Delta t)\qquad\qquad\quad\,\,\,\,\,\mbox{for $h>N_{\Delta t}$}.
\end{aligned}\right.
\end{align*}

\begin{lemma}\label{lem:approximate-leading-shock}
The approximate shock speed constructed above has the following properties{\rm :} 
\begin{itemize}
\item [{\rm (i)}] $\displaystyle TV(s^{\Delta t})\le TV(s)${\rm ;}
\item [{\rm (ii)}] $\displaystyle s_{h+1}^{\Delta t}-s_h^{\Delta t}\geq-\frac{\Lambda \Delta t}{1+(h-1)\Delta t}$, 
$h=1,\,2,\,\cdots,\,N_{\Delta t}${\rm ;}
\item [{\rm (iii)}] For any fixed $T>0$, $\displaystyle \lim_{\Delta t\to 0}\int_0^T |s^{\Delta t}(t)-s(t)|\,\dd t=0$.
\end{itemize}
\end{lemma}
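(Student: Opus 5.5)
We prove the three properties separately; each follows from the fact that $s^{\Delta t}$ is built from cell averages of $s$ on the intervals $I_h=((h-1)\Delta t,h\Delta t]$, with a constant tail.

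\textbf{Property (i).} Since $s^{\Delta t}$ is piecewise constant,
\[
TV(s^{\Delta t})=\sum_{h=1}^{N_{\Delta t}-1}|s^{\Delta t}_{h+1}-s^{\Delta t}_h|+|s(N_{\Delta t}\Delta t)-s^{\Delta t}_{N_{\Delta t}}|.
\]
Each average $s^{\Delta t}_h$ lies between $\operatorname{ess\,inf}_{I_h}s$ and $\operatorname{ess\,sup}_{I_h}s$. Hence there are points $\tau_h\in \bar I_h$ (or one-sided limits) with $s(\tau_h)=s^{\Delta t}_h$, using one-sided limits of the BV function if needed. Taking $\tau_{N_{\Delta t}+1}=N_{\Delta t}\Delta t$, the points satisfy $\tau_1\le\tau_2\le\cdots$. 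The sum above is then bounded by the variation of $s$ along an increasing partition, which is at most $TV(s)$. The only care needed is the endpoint $N_{\Delta t}\Delta t$, which may coincide with $\tau_{N_{\Delta t}}$. This is harmless, since working with a right-continuous representative keeps the partition monotone.

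\textbf{Property (ii).} This is the step that uses the Ole\u{\i}nik-type hypothesis. Since $s+\qq$ is nondecreasing,
\[
s(r+\Delta t)-s(r)\ge -\big(\qq(r+\Delta t)-\qq(r)\big).
\]
Averaging over $r\in I_h$ gives
\[
s^{\Delta t}_{h+1}-s^{\Delta t}_h=\frac1{\Delta t}\int_{I_h}\big(s(r+\Delta t)-s(r)\big)\,\dd r\ge-\frac1{\Delta t}\int_{I_h}\big(\qq(r+\Delta t)-\qq(r)\big)\,\dd r .
\]
By \eqref{eq:q-condition}, $\qq'(\rho)\le \Lambda/(1+\rho)$. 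For $r\in I_h$ and $\rho\in[r,r+\Delta t]$ we have $\rho\ge (h-1)\Delta t$, so $\qq(r+\Delta t)-\qq(r)\le \Lambda\Delta t/(1+(h-1)\Delta t)$, which gives the stated bound. For $h=N_{\Delta t}$, the value $s^{\Delta t}_{N_{\Delta t}+1}=s(N_{\Delta t}\Delta t)$ is a point value rather than an average. Monotonicity of $s+\qq$ still gives $s(N_{\Delta t}\Delta t)\ge s(r)-(\qq(N_{\Delta t}\Delta t)-\qq(r))$ for $r\in I_{N_{\Delta t}}$, and averaging yields the same estimate. If $s$ is only defined a.e., we interpret $s(N_{\Delta t}\Delta t)$ as a one-sided limit.

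\textbf{Property (iii).} Fix $T$. On $[0,\min(T,N_{\Delta t}\Delta t)]$, the standard averaging estimate for BV functions gives
\[
\int_{I_h}|s-s^{\Delta t}_h|\,\dd t\le \Delta t\, TV(s;I_h).
\]
Summing over $h$ gives a contribution of at most $\Delta t\,TV(s)$. On $(N_{\Delta t}\Delta t,T]$, the choice of $N_{\Delta t}$ gives $|s(t)-s(N_{\Delta t}\Delta t)|\le TV\{s:[N_{\Delta t}\Delta t,\infty)\}<\Delta t$, so this part contributes at most $T\Delta t$. Altogether
\[
\int_0^T|s^{\Delta t}-s|\,\dd t\le (TV(s)+T)\,\Delta t\to0 .
\]

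The main obstacle is the endpoint and representative bookkeeping at $t=N_{\Delta t}\Delta t$ in (i) and (ii). The core inequalities themselves are immediate.
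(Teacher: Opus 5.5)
\textbf{Gap in (i).} From $\mathrm{ess\,inf}_{I_h}s\le s_h^{\Delta t}\le\mathrm{ess\,sup}_{I_h}s$ you conclude that $s_h^{\Delta t}=s(\tau_h)$ for some $\tau_h\in\bar I_h$, or equals a one-sided limit there. That is an intermediate value property, and BV functions do not have it. Take, for example, $s=s_0$ for $t\le(h-\frac12)\Delta t$ and $s=s_0+\epsilon$ for $t>(h-\frac12)\Delta t$. This upward jump is fully compatible with the Ole\u{\i}nik-type condition. Then $s_h^{\Delta t}=s_0+\epsilon/2$, which is neither a value nor a one-sided limit of $s$ anywhere. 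So the increasing partition on which your bound rests need not exist. The real difficulty is this, not the endpoint bookkeeping at $N_{\Delta t}\Delta t$ that you single out.

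\textbf{Repairs.} The conclusion is true, and there are two ways to close the gap.

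One way is to let $\tau_h$ be a jump point and attach to it the value $y_h=s_h^{\Delta t}$ lying between $s(\tau_h-)$ and $s(\tau_h+)$. Such a point exists because the graph of $s$ over $I_h$, with its jumps filled in by vertical segments, is connected. At a shared grid point, use the left limit for $I_h$ and the right limit for $I_{h+1}$. Since $|s(\tau+)-y|+|y-s(\tau-)|=|s(\tau+)-s(\tau-)|$ for such $y$, the sampled variation is still at most $TV(s)$.

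The paper's route is cleaner. Let $\tilde s=s$ on $[0,N_{\Delta t}\Delta t]$ and freeze it at $s(N_{\Delta t}\Delta t)$ afterwards. Then every increment, including the last, equals $\frac{1}{\Delta t}\int_{I_h}(\tilde s(r+\Delta t)-\tilde s(r))\,\dd r$, which is the same identity you already use in (ii). Summing and applying the translation estimate $\int_0^\infty|\tilde s(r+\Delta t)-\tilde s(r)|\,\dd r\le\Delta t\,TV(\tilde s)\le\Delta t\,TV(s)$ gives (i) without choosing any pointwise representatives.

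\textbf{Parts (ii) and (iii).} Your (ii) is exactly the paper's argument, including the separate treatment of $h=N_{\Delta t}$. Your (iii) is correct by a genuinely different route. The paper uses the Lebesgue differentiation theorem plus dominated convergence, which is purely qualitative. Your cellwise bound $\int_{I_h}|s-s_h^{\Delta t}|\,\dd t\le\Delta t\,TV(s;I_h)$, together with the tail estimate, gives the explicit rate $(TV(s)+T)\,\Delta t$.
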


\begin{proof} Let 
\begin{equation*}
\tilde{s}(t)=\left\{
\begin{aligned}
&s(t)  &&\mbox{for $t\in[0,\,N_{\Delta t}\, \Delta t]$},\\
&s(N_{\Delta t}\,\Delta t)\quad &&\mbox{for $t\in(N_{\Delta t}\, \Delta t,\,\infty)$}.
\end{aligned}\right.
\end{equation*}
Then we have
\begin{align*}
TV(s^{\Delta t})={}&\sum_{h=1}^{N_{\Delta t}}\Big|\frac{1}{\Delta t}\int_{(h-1)\Delta t}^{h\Delta t}
\big(\tilde{s}(r+\Delta t)-\tilde{s}(r)\big)\,\dd r\Big|\\
\le{}&\frac{1}{\Delta t}\int_{0}^{\infty}|\tilde{s}(r+\Delta t)-\tilde{s}(r)|\,\dd r\\
\le{}&TV(s).
\end{align*}
Then {\rm (i)} follows. 

Moreover, the assumptions on $s$ imply
\begin{equation*}
s(r_1)-s(r_2)\geq \qq(r_2)-\qq(r_1)\qquad\mbox{for }r_1>r_2,
\end{equation*}
which, together with \eqref{eq:q-condition}, yields
\begin{align*}
s_{h+1}^{\Delta t}-s_h^{\Delta t}={}&
\frac{1}{\Delta t}\int_{(h-1)\Delta t}^{h\Delta t}\big(s(r+\Delta t)-s(r)\big)\,\dd r\\
\geq{}&\frac{1}{\Delta t}\int_{(h-1)\Delta t}^{h\Delta t}\big(\qq(r)-\qq(r+\Delta t)\big)\,\dd r\\
\geq{}&-\frac{\Lambda \Delta t}{1+(h-1)\Delta t}\qquad\mbox{for  }h=1,\,2,\,\cdots,\,N_{\Delta t}-1,\\
s_{N_{\Delta t}+1}^{\Delta t}-s_{N_{\Delta t}}^{\Delta t}={}
&\frac{1}{\Delta t}\int_{(N_{\Delta t}-1)\Delta t}^{N_{\Delta t}\Delta t}\big(s(N_{\Delta t}\Delta t)-s(r)\big)\,\dd r\\
\geq{}&\frac{1}{\Delta t}\int_{(N_{\Delta t}-1)\Delta t}^{N_{\Delta t}\Delta t}\big(\qq(r)-\qq(N_{\Delta t}\Delta t)\big)\,\dd r\\
\geq{}&-\frac{\Lambda \Delta t}{1+(N_{\Delta t}-1)\Delta t}\qquad\mbox{for  }h=N_{\Delta t},
\end{align*}
leading to {\rm (ii)}.

Finally, by the Lebesgue differentiation theorem, 
\begin{equation*}
\lim_{\Delta t\to 0}s^{\Delta t}(t)=s(t)\qquad a.e.
\end{equation*}
Since $\|s(\cdot)-s_0\|_{L^{\infty} \cap BV}$ is bounded, by the dominated convergence theorem, 
we conclude
\begin{equation*}
\lim_{\Delta t\to 0}\|s^{\Delta t}(\cdot)-s(\cdot)\|_{L^1([0,T])}=0.
\end{equation*}
The proof is complete.
\end{proof}

By Lemma \ref{lem:approximate-leading-shock}, for any $\mu>0$, we may approximate the speed of 
the leading shock by $s^{\mu,\Delta t}$ satisfying properties {\rm (i)}--{\rm (ii)} 
and $ \|s^{\mu,\Delta t}(\cdot)-s(\cdot)\|_{L^1([0,T])}<\mu$. 
Hence, the approximate leading shock trajectory is given by $\chi^{\mu,\Delta t}(t):=\int_0^t s^{\mu,\Delta t}(r)\,\dd r$.

Recalling \eqref{eq:separate}, it can be seen that, as $\|s(\cdot)-s_0\|_{L^{\infty}}$ is sufficiently small, for each $t$, 
there is a unique $\tau_t$ such that $\chi^{\mu,\Delta t}(t)=\lambda^\n\,(t-\tau_t)$, 
where $\tau_t$ is the {\it time} at which $\LL_{\tau_t}$ intersects the $t$-axis.

\begin{figure}[ht]
\centering  
\includegraphics[width=0.60\textwidth]{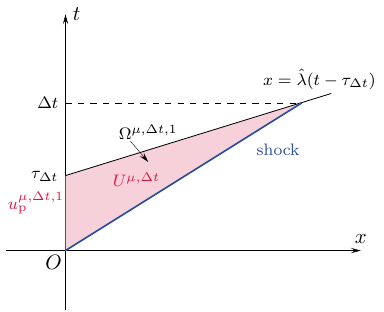}
\caption{The approximate solution near the origin}
\label{fig:approximate-solution-origin}
\end{figure}

When $h=1$, given $s^{\mu,\Delta t}_{1}\in O_{\epsilon_s}(s_0)$, 
according to Lemma \ref{lem:strong-wave-interaction-estimate}, we define
\begin{equation*}
u_{\rm p}^{\mu,\Delta t,1}:= G^{(2)}(s^{\mu,\Delta t}_1;\,U_0),
\end{equation*} 
where $G^{(k)}$ is the $k$-th component of $G$ for $k=1,\,2$.

Let $u_{\rm p}^{\mu,\Delta t}(t)\equiv u_{\rm p}^{\mu,\Delta t,1}$ for $0< t \leq\tau_{\Delta t}$,
and let
\begin{equation*}
U^{\mu,\Delta t}(x,t)=G(s^{\mu,\Delta t}_1;\,U_0)\qquad\text{for $(x,t)\in \Omega^{\mu,\Delta t,1}$},
\end{equation*} 
where 
\begin{equation*}
\Omega^{\mu,\Delta t,1}:=\big\{(x,t)\,:\,\,x>0,\,x<\chi^{\mu,\Delta t}(t),\, x\geq\lambda^\n\,(t-\tau_{\Delta t}),\, 0< t\leq \Delta t\big\};
\end{equation*} 
see Fig.~\ref{fig:approximate-solution-origin}.

Suppose now that the approximate piston speed $u_{\rm p}^{\mu,\Delta t}$ is defined for $t\in[0,\tau_{h\Delta t})$ 
and the approximate solution $U^{\mu,\Delta t}(x,t)$ is constructed on 
\begin{equation*}
\bigcup_{k=1}^{h}\Omega^{\mu,\Delta t,k}\qquad \text{for $h\in \mathbb{N}_+$},
\end{equation*}
we extend the approximate piston speed $u_{\rm p}^{\mu,\Delta t}$ to $t\in[\tau_{h\Delta t},\tau_{(h+1)\Delta t})$ and the approximate solution to 
\begin{align*}
\Omega^{\mu,\Delta t,h+1}={}&\big\{(x,t)\,:\, x>0,\,0< t< (h+1)\Delta t\big\}\\
&\cap\big\{(x,t)\,:\,\lambda^\n\,(t-\tau_{(h+1)\Delta t})<x\leq\lambda^\n\,(t-\tau_{h\Delta t}),\,0< t< (h+1)\Delta t\big\}\\
&\cap\big\{(x,t)\,:\,x<\chi^{\mu,\Delta t}(t),\,0< t< (h+1)\Delta t\big\},
\end{align*}
where $\tau_{h\Delta t}$ is the time at which line $\LL_{\tau_{h\Delta t}}$, passing through $(\chi^{\mu,\Delta t}(h\Delta t),h\Delta t)$, 
intersects the $t$-axis.

Denote
\begin{equation*}
\varepsilon_0=\min\{\varepsilon,\,\epsilon_0\},
\end{equation*}
where $\varepsilon$ and $\epsilon_0$ are introduced in \eqref{eq:separate} and Lemma \ref{lem:inverse-riemann-problem}, respectively.
Moreover, we assume that $u_{\rm p}^{\mu,\Delta t}$ and $U^{\mu,\Delta t}(x,t)$ satisfy the following:

\smallskip
\begin{itemize}
\item[$\mathrm{H}_1(h)$:] The approximate solution is piecewise constant vectors such that
$U^{\mu,\Delta t}\in O_{\varepsilon_0}(\overline{U})$ and, in particular, 
the approximate speed $u_{\rm p}^{\mu,\Delta t}(t)$ is piecewise constant with
$u_{\rm p}^{\mu,\Delta t}\in O_{\varepsilon_0}(\overline{u})$.

\vspace{2pt}  
\item[$\mathrm{H}_2(h)$:] The wavefronts of the same family do not intersect in \(\bigcup_{k=1}^{h}\Omega^{\mu,\Delta t,k}\).

\vspace{2pt}
\item[$\mathrm{H}_3(h)$:] There are finitely many jump discontinuities classified as rarefaction fronts $\mathcal{R}$ 
and weak shock fronts $\mathcal{S}$ with total wavefront count denoted by $N_h$. 
\end{itemize}

Slightly abusing notation, we denote a front by $\alpha$ and its strength by $|\alpha|$, 
and apply a modified wavefront tracking method to extend the approximate solutions; 
see Fig.~\ref{fig:wave-front-tracking}.
\begin{figure}[ht]
\centering  
\includegraphics[width=0.9\textwidth]{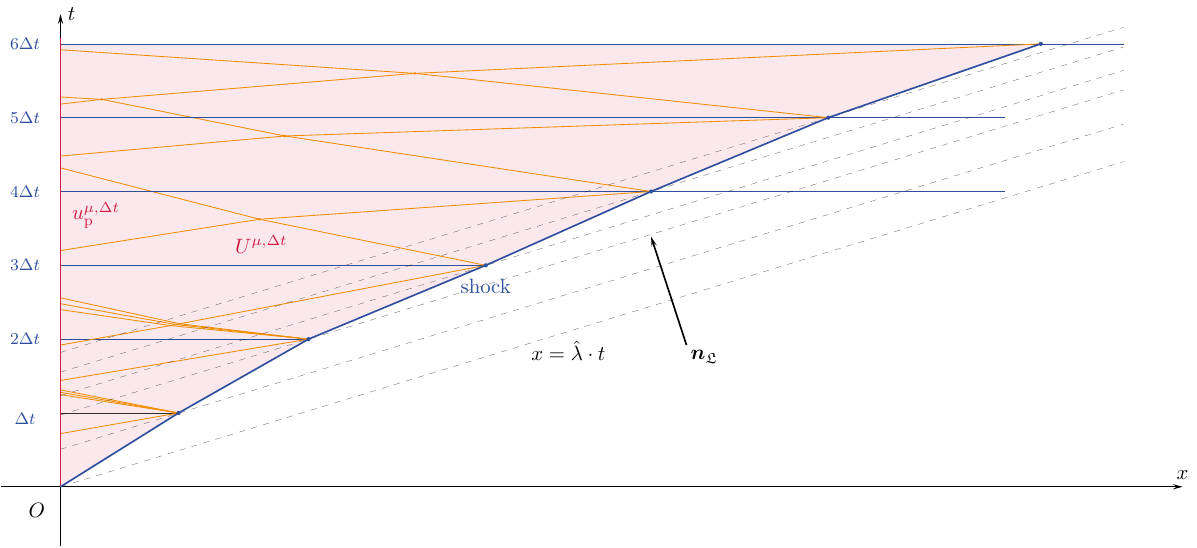}
\caption{Wavefront tracking}
\label{fig:wave-front-tracking}
\end{figure}
Wavefronts are typically propagated until they collide. 
When two wavefronts collide at a point $(\bar{x},\bar{t})\in\Omega^{\mu,\Delta t,h+1}$, 
they separate three states, denoted $U_b$, $U_m$, and $U_a$ (ordered from bottom to top).  
We then employ the accurate Riemann solver to continue the wavefront tracking construction; see also \cite{Bressan2000}.\par

\smallskip
\textit{Case 1. Weak waves are generated from the leading shock at $(\chi^{\mu,\Delta t}(k\Delta t),k\Delta t)$}: 
An accurate Riemann solver is applied.

\smallskip
\textit{Case 2. $\alpha$ and $\varpi$ are weak waves of different families colliding at $(\bar{x},\bar{t})$}: 
We solve the forward-backward Riemann-type problem with initial data \eqref{eq:inverse-riemann-data} 
by using the accurate Riemann solver. 
Moreover, the newly generated rarefaction fronts are never partitioned, regardless of their strength (even if their strength exceeds $\delta$).

\smallskip
\textit{Case 3. A weak wave $\alpha$ separating states $U_a$ and $U_b$ hits the boundary}: 
Let $\alpha$ cross the boundary and update $u_{\rm p}^{\mu,\Delta t}$ from $U_b^{(2)}$ to $U_a^{(2)}$.

\begin{remark}\label{rem-no-perturbation}
In our setting, waves belonging to the same family do not interact. 
Consequently, unlike in the general wavefront tracking algorithm, there is no need to adjust 
wave speeds or introduce non-physical waves. 
\end{remark}

\begin{remark}
The boundary constructed in \textit{Case 3} satisfies the non-reflection boundary condition: 
waves impinging on the boundary are absorbed, and no reflected wavefronts are generated.
\end{remark}

\section{Validity of the Extension}\label{sect-extension}
To prove that the approximate piston speed $u_{\rm p}^{\mu,\Delta t}$ is well defined on $(0,\tau_{(h+1)\Delta t}]$ and 
that the approximate solutions are well defined on
\begin{equation*}
\bigcup_{k=1}^{h+1}\Omega^{\mu,\Delta t,k},
\end{equation*}
via the above
construction, it remains to establish a suitable uniform bound.

We next show that $U^{\mu,\Delta t}$ can be extended to $\Omega^{\mu,\Delta t,h+1}$ while preserving 
hypotheses $\mathrm{H}_1(h+1)$--$\mathrm{H}_3(h+1)$. 
To this end, we first introduce a Glimm-type functional and establish its non-increasing property.
This yields a uniform bound on the total variation 
of the approximate solutions. We then prove that the wavefronts
of the same family do not interact within $\Omega^{\mu,\Delta t,h+1}$. 

The following lemma is used in our proofs:
\begin{lemma}\label{lem:bv-estimate}
If $U_2 =\Phi_1(\alpha_1;\,\Xi_2(-\alpha_2;\,U_1))$ with both $U_1, U_2 \in O_{\varepsilon_0}(\overline{U})$, then
\begin{equation*}
|U_1 - U_2| \le C_1 (|\alpha_1|+|\alpha_2|),
\end{equation*}
where 
$C_1> 0$ depends only on $\overline{U}$ and the system \eqref{eq:p-system}.
\end{lemma}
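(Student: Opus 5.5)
The estimate is a Lipschitz bound for a smooth composite map, so I would argue by a mean value identity along the two wave curves, exactly as at the end of the proof of Lemma~\ref{lem:inverse-riemann-problem}. Set $U_m=\Xi_2(-\alpha_2;\,U_1)$, so that $U_2=\Phi_1(\alpha_1;\,U_m)$. I will split
\begin{equation*}
U_2-U_1=\big(\Phi_1(\alpha_1;\,U_m)-\Phi_1(0;\,U_m)\big)+\big(\Xi_2(-\alpha_2;\,U_1)-\Xi_2(0;\,U_1)\big),
\end{equation*}
using $\Phi_1(0;\,U_m)=U_m$ and $\Xi_2(0;\,U_1)=U_1$, and write each bracket as an integral of the derivative along the curve:
\begin{equation*}
U_2-U_1=\alpha_1\int_0^1\frac{\partial\Phi_1}{\partial\alpha_1}(\theta\alpha_1;\,U_m)\,\dd\theta-\alpha_2\int_0^1\frac{\partial\Xi_2}{\partial\alpha_2}(-\theta\alpha_2;\,U_1)\,\dd\theta .
\end{equation*}
The claim then reduces to a uniform bound on these two partial derivatives.

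The key step is that uniform bound over the relevant parameter range. The curves $\Phi_1$ and $\Xi_2$ are made of rarefaction and shock branches. By the classical theory (\cite[Ch.~5]{Bressan2000}), each is $C^2$ jointly in $(\sigma,U_-)$ near $\sigma=0$, and the branches match at $\sigma=0$ to second order. At $\sigma=0$ their $\sigma$-derivatives equal $\bm r_1$ and $\bm r_2$ by \eqref{eq:wave-curve-parameterization-derivative}. Since $p$ is smooth and $p''>0$ on $v>0$, the vectors $\bm r_i$ are bounded near $\overline{U}$. The parameterization \eqref{eq:parameterization} is well defined and smooth because of genuine nonlinearity \eqref{eq:genuinely-nonlinear}: $\frac{\dd\lambda_i}{\dd\sigma}=1$ is a nondegenerate reparameterization of the arc.

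So on a compact set $\{|\sigma|\le\sigma_0,\ U\in\overline{O_{2\varepsilon_0}(\overline{U})}\}$, both derivatives are bounded by a constant $C_1$ that depends only on $\overline{U}$ and $p$.

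The main obstacle is that the hypotheses only place the endpoints $U_1,U_2$ in $O_{\varepsilon_0}(\overline{U})$, not the intermediate state $U_m$, and they give no a priori smallness of $\alpha_1,\alpha_2$. I would handle this with the uniqueness part of Lemma~\ref{lem:inverse-riemann-problem}, which applies because $\varepsilon_0\le\epsilon_0$. That lemma gives a unique small solution $\bm\beta$ with $|\beta_1|+|\beta_2|\le O(1)|U_2-U_1|\le O(1)\varepsilon_0$. Taking the $\alpha_i$ to be that solution, as they are in all uses in the construction, they are small. Then $U_m$ lies in $O_{2\varepsilon_0}(\overline{U})$ after shrinking $\varepsilon_0$ if necessary, and all intermediate points stay in the compact set above.

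With this in place the estimate follows immediately from the integral identity. The constant is $C_1=\max\big\{\sup|\partial_\sigma\Phi_1|,\ \sup|\partial_\sigma\Xi_2|\big\}$, with both suprema taken over that compact set.
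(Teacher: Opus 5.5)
Your proof is correct and is essentially the paper's argument. The paper states this lemma without a separate proof; it relies on exactly the integral identity along the two wave curves that closes the proof of Lemma~\ref{lem:inverse-riemann-problem}, combined with uniform bounds on $\partial_\sigma\Phi_1$ and $\partial_\sigma\Xi_2$ near $\overline{U}$. Two small remarks: your detour through the small solution of Lemma~\ref{lem:inverse-riemann-problem} is harmless but unnecessary, since if $|\alpha_1|+|\alpha_2|\ge\sigma_0$ the bound is trivial from $|U_1-U_2|<2\varepsilon_0$; and $U_m$ only lands in $O_{C\varepsilon_0}(\overline{U})$ for some fixed $C$, not necessarily $O_{2\varepsilon_0}(\overline{U})$, which changes nothing in the argument.
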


\subsection{Glimm-type functionals}
We now introduce the notion of approaching waves, which is crucial in the definition of the Glimm-type functionals (also see \cite{Dafermos2026,Glimm1965}).

\begin{figure}[ht]
\centering  
\includegraphics[width=0.65\textwidth]{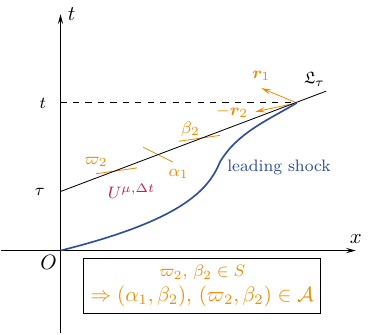}
\caption{Approaching waves}
\label{fig:approaching-wave}
\end{figure}
\begin{definition}[Approaching] Two fronts $\alpha$ and $\varpi$ located on the same line $\LL_{\tau}$ at 
positions $(x_{\alpha},t_{\alpha})\in \LL_{\tau}$ and $(x_{\varpi},t_{\varpi})\in \LL_{\tau}$ of families
$i_{\alpha}, i_{\varpi} \in \{1,\,2\}$, respectively, are \textit{approaching} if $i_{\alpha} > i_{\varpi}$ and $t_{\alpha} > t_{\varpi}$. 
This approaching relation is denoted by $(\alpha,\varpi)\in \mathcal{A}$. In particular, $\mathcal{A}(\alpha)$ denotes 
the set of $\varpi$ satisfying $(\alpha,\varpi)\in \mathcal{A}$ (see Fig. \ref{fig:approaching-wave}). 
\end{definition}

\begin{remark}
In the classical definition of approaching waves, the case $i_{\alpha} = i_{\varpi}$ is included whenever at least 
one of the corresponding wavefronts is a shock. 
In our setting, however, this case is excluded, since we will show that wavefronts of the same family never intersect.
\end{remark}

The Glimm-type functionals are defined as follows:
\begin{definition}[Glimm-type functionals]
\label{def:glimm-functional}
For each $\tau \in[0,\tau_{(h+1)\Delta t}]$,
define
\begin{align*}
L(\tau)={}&\sum_{\alpha\text{ is a weak wave crossing } \LL_{\tau}}|\alpha|,\\
Q(\tau)={}&\sum_{\substack{\alpha,\,\varpi\text{ are weak waves crossing } \LL_{\tau}\\
	(\alpha,\varpi)\in \mathcal{A}}}|\alpha||\varpi|,\\
	P(\tau)={}&\sum_{k\ge\min\{l\,:\,\tau<\tau_{l\Delta t}\}}|s^{\mu,\Delta t,k+1}-s^{\mu,\Delta t,k}|,\\
	F(\tau)={}&L(\tau)+K_1Q(\tau)+K_2P(\tau),
\end{align*}
where $K_1,\,K_2>0$  are to be determined later.
\end{definition}

To trace the evolution of the Glimm-type functionals, we need to analyze the wave interactions and the wave generation 
from the leading shock, as well as a weak wave impinging onto the piston boundary $\Gamma$. 
\begin{definition}\label{def-regular-interaction-time}
A point  \((\bar{x},\bar{t})\) is called a regular interaction point if one of the following interactions occurs{\rm :}
\begin{itemize}
\item[\rm (i)] two weak waves of the different families interact at \((\bar{x},\bar{t})\);
\item[\rm (ii)] new waves are generated from the leading shock at \((\bar{x},\bar{t})\);
\item[\rm (iii)] a weak wave interacts with the boundary $\{(0,t)\,:\,t\geq 0\}$ at \((\bar{x},\bar{t})\). 
\end{itemize}
Moreover, if two weak waves of the same family interact at \((\bar{x},\bar{t})\), 
then \((\bar{x},\bar{t})\) is called an irregular interaction point.
\end{definition}

Then, for the Glimm-type functional $F(\tau)$, we have
\begin{proposition}
\label{prop:glimm-functional-decreasing}
There exist positive constants $K_1$, $K_2$ suitably large and positive constant $\varepsilon_{\rm g}$ sufficiently small 
such that, for any regular interaction point $(\bar{x},\bar{t})\in \LL_{\tau}$, whenever $F(\tau-) \le \varepsilon_{\rm g}$, 
\begin{equation*}
F(\tau+) \leq F(\tau-).
\end{equation*}
\end{proposition}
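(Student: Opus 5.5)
The proof is a case analysis over the three kinds of regular interaction point in Definition~\ref{def-regular-interaction-time}. Throughout, the a priori bound $F(\tau-)\le\varepsilon_{\rm g}$ gives $L(\tau-)\le\varepsilon_{\rm g}$. Hence every state on $\LL_\tau$ stays in $O_{\varepsilon_0}(\overline{U})$ by Lemma~\ref{lem:bv-estimate}, and Lemmas~\ref{lem:inverse-riemann-problem}--\ref{lem:strong-wave-interaction-estimate} apply. In each case, crossing the interaction point changes the set of fronts on $\LL_\tau$ only locally. The waves meeting the point are replaced by the outgoing waves, and every other front keeps its strength and its order along $\LL_\tau$. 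I would therefore compute $\Delta L$, $\Delta Q$ and $\Delta P$ separately and show that their weighted sum is nonpositive.

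\emph{Case (i): two weak waves of different families interact.} Along $\LL_\tau$ the incoming waves are a $2$-wave $\alpha_2$ above a $1$-wave $\alpha_1$, so the pair is approaching. By Lemma~\ref{lem:wave-interaction-estimate-1} the outgoing waves satisfy $\beta_i=\alpha_i+O(1)|\alpha_1||\alpha_2|$, which gives $\Delta L\le C|\alpha_1||\alpha_2|$. In $Q$ the product $|\alpha_1||\alpha_2|$ disappears, because after the interaction the $1$-wave lies above the $2$-wave and they are no longer approaching. The remaining change in $Q$ is bounded by $C|\alpha_1||\alpha_2|\,L(\tau-)$, and $P$ is unchanged. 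Thus
\[
\Delta F\le |\alpha_1||\alpha_2|\big(C-K_1+K_1C\varepsilon_{\rm g}\big)\le0
\]
for $K_1>2C$ and $\varepsilon_{\rm g}$ small.

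\emph{Case (iii): a weak wave $\alpha$ hits the boundary $x=0$.} The front is absorbed with no reflected wave, since the boundary condition is non-reflecting. Hence $\Delta L=-|\alpha|$, $\Delta Q\le0$ and $\Delta P=0$.

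\emph{Case (ii): new waves are generated at the shock point $(\chi^{\mu,\Delta t}(k\Delta t),k\Delta t)$.} This case needs care because $L$ and $Q$ increase. Only the backward $2$-wave $\beta_2$ enters the region, and by Lemma~\ref{lem:strong-wave-interaction-estimate} together with Lemma~\ref{lem:inverse-riemann-problem} its strength satisfies $|\beta_2|\le C_2|s^{\mu,\Delta t,k+1}-s^{\mu,\Delta t,k}|$. The incoming $1$-wave reflection at the shock is a similar situation and would also be absorbed into this estimate. This gives
\[
\Delta L\le C_2|\Delta s_k|,\qquad \Delta Q\le C_2|\Delta s_k|\,L(\tau-).
\]
Meanwhile $P$ drops exactly the term $|\Delta s_k|$, since $\tau$ passes $\tau_{k\Delta t}$. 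Therefore
\[
\Delta F\le|\Delta s_k|\big(C_2+K_1C_2\varepsilon_{\rm g}-K_2\big)\le0
\]
once $K_2>2C_2$, with $K_1$ chosen first.

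The main obstacle is the bookkeeping in Case~(ii), in particular deciding correctly which fronts are newly placed on $\LL_\tau$ and their approaching relations. This relies on the geometry from \eqref{eq:separate}: the lines $\LL_\tau$ are spacelike relative to both families and to the shock, so every new front lies on one side of each $\LL_{\tau'}$ with a definite order. Two further checks are needed. First, the accurate Riemann solver splits rarefactions without changing total strength. Second, a $1$-wave incoming to the shock, if that configuration can occur, must be handled by combining Lemma~\ref{lem:wave-interaction-estimate-1} with Lemma~\ref{lem:strong-wave-interaction-estimate}, which contributes a term $O(1)|\alpha_1|$ that has to be controlled. I would first try to show that this configuration does not occur, since $\lambda_1<0<s$ means $1$-waves move away from the shock. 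Choosing $K_1$, then $K_2$, then $\varepsilon_{\rm g}$ completes the argument.
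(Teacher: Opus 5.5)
Your proposal is correct and follows the paper's proof: the same three cases (an interior interaction handled by Lemma \ref{lem:wave-interaction-estimate-1}, with the approaching product dropping out of $Q$; absorption at the piston; and generation at a shock kink, paid for by the drop $|s^{\mu,\Delta t,k+1}-s^{\mu,\Delta t,k}|$ in $P$), with constants chosen in the same order, $K_1$, then $K_2$, then $\varepsilon_{\rm g}$. One correction to your Case (ii): each kink emits both a forward $1$-wave $\beta_1$ and a backward $2$-wave $\beta_2$, both nonzero by Lemma \ref{lem:strong-wave-interaction-estimate}, and no front ever reaches the shock from inside, so there is no incoming or reflected $1$-wave to handle; since Lemmas \ref{lem:inverse-riemann-problem} and \ref{lem:strong-wave-interaction-estimate} give $|\beta_1|+|\beta_2|\le C|s^{\mu,\Delta t,k+1}-s^{\mu,\Delta t,k}|$, your bounds on $\Delta L$ and $\Delta Q$ are unaffected.
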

\begin{proof}
Let $C > 0$ denote a universal constant depending only on $\overline{U}$ and 
system \eqref{eq:p-system}, varying when necessary.

\medskip\noindent
\textit{Case 1. Interior interactions between weak waves}: 
A weak wavefront $\alpha_i$ ($i$-family) interacts with $\varpi_j$ ($j$-family, \(j\neq i\)) at $(\bar x,\bar t)\in \LL_{\tau}$ 
for $i,\,j \in \{1,\,2\}$ and $\tau_{h\Delta t}<\tau<\tau_{(h+1)\Delta t}$. By Lemma \ref{lem:wave-interaction-estimate-1}, we have
\begin{align*}
L(\tau+) - L(\tau-) \le{}& C|\alpha_i||\varpi_j|, \\
Q(\tau+) - Q(\tau-) \le{}& -|\alpha_i||\varpi_j|+CL(\tau-)|\alpha_i||\varpi_j|,
\end{align*}
which implies
\begin{equation*}
F(\tau+) - F(\tau-) \le -\big(K_1-C-CL(\tau-)\big) |\alpha_i||\varpi_j|.
\end{equation*}

\medskip\noindent
\textit{Case 2. Weak wavefronts generated from the shock}: 
Wavefronts $\alpha_1$ and $\alpha_2$ are generated at $(\chi^{\mu,\Delta t}(h\Delta t),h\Delta t)$. A direct calculation leads to
\begin{align*}
L(\tau+) - L(\tau-) ={}& |\alpha_1|+|\alpha_2|, \\
Q(\tau+) - Q(\tau-) \le{}& (|\alpha_1|+|\alpha_2|)L(\tau-),\\
P(\tau+) - P(\tau-) \le{}& -|s^{\mu,\Delta t,h+1}-s^{\mu,\Delta t,h}|,
\end{align*}
which implies
\begin{equation*}
F(\tau+) - F(\tau-) \le (|\alpha_1|+|\alpha_2|)\big(1+K_1L(\tau-)\big)-K_2|s^{\mu,\Delta t,h+1}-s^{\mu,\Delta t,h}|.
\end{equation*}
By Lemmas \ref{lem:inverse-riemann-problem} and \ref{lem:strong-wave-interaction-estimate}, we obtain 
\begin{equation*}
F(\tau+) - F(\tau-) \le -|s^{\mu,\Delta t,h+1}-s^{\mu,\Delta t,h}|\big(K_2-C-CK_1L(\tau-)\big).
\end{equation*}

\medskip\noindent
\textit{Case 3. Weak wavefronts interact with the boundary}:
A wavefront $\alpha$ interacts with the boundary $\{(0,t)\,:\,t\geq 0\}$. In this case, we have
\begin{align*}
L(\tau+) - L(\tau-) ={}& -|\alpha|, \\
Q(\tau+) - Q(\tau-) \le{}& 0,
\end{align*}
leading to 
\begin{equation*}
F(\tau+) - F(\tau-) \le -|\alpha|.
\end{equation*}
\medskip\noindent
Therefore, by taking 
\begin{equation*}
K_1>3C,\quad\,\, K_2>3C,\quad\,\, \varepsilon_{\rm g}<\min\{1,\,\frac{1}{2K_1}\},
\end{equation*}
we complete the proof.
\end{proof}

Notice that the number of wavefronts remains unchanged after
a regular interaction between wave fronts of different families. Proposition \ref{prop:glimm-functional-decreasing} 
immediately implies the following corollary.
\begin{corollary}\label{coro:estimate-glimm-functional}
Fix $\tau_{\rm i}>0$ and assume that $\LL_{\tau}$ contains no irregular interaction points for any $\tau\in(0,\tau_{\rm i}]$. 
Then the total number of wavefronts on $\LL_{\tau}$ is finite. Moreover, if $F(0) < \varepsilon_{\mathrm{g}}$, then $F(\tau+) \leq F(0)$. 
Consequently, $U^{\mu,\Delta t}|_{\LL_{\tau}}\in O_{\varepsilon_0}(\overline{U})$, provided that $\epsilon_{\rm inv}$ is chosen sufficiently small.
\end{corollary}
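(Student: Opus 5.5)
We argue on the family of lines $\LL_\tau$ and use only Proposition~\ref{prop:glimm-functional-decreasing} together with the local Riemann estimates. The corollary has three parts: finiteness of the wavefront count, the inequality $F(\tau+)\le F(0)$, and the smallness of $U^{\mu,\Delta t}$ on $\LL_\tau$. We prove them in that order.

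\emph{Finiteness.} By hypothesis, every interaction point on $\LL_\tau$ for $\tau\in(0,\tau_{\rm i}]$ is regular. We check how each type of regular interaction changes the number of fronts.
\begin{itemize}
\item[(i)] In an interior interaction of two fronts of different families, each front is a single shock or rarefaction front. The accurate Riemann solver outputs exactly one $1$-front and one $2$-front, since Case~2 of the construction forbids partitioning newly generated rarefactions. The number of fronts is therefore unchanged.
\item[(ii)] At a leading-shock vertex $(\chi^{\mu,\Delta t}(k\Delta t),k\Delta t)$, the solver produces finitely many fronts: at most $\lceil|\beta_i|/\delta\rceil+1$ per family. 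There are only finitely many such vertices with $\tau_{k\Delta t}\le\tau_{\rm i}$, because $\tau_{k\Delta t}\to\infty$ as $k\to\infty$ by \eqref{eq:separate}.
\item[(iii)] A front hitting the boundary is absorbed, so the count decreases by one.
\end{itemize}
Next we rule out accumulation of interaction points. Fronts of different families meet at most once, since their speeds are separated by \eqref{eq:separate}. The count cannot blow up, since it never increases except at the finitely many vertices. Hence only finitely many interactions occur in the region swept by $\LL_\tau$, $\tau\le\tau_{\rm i}$, and the count on each $\LL_\tau$ is finite.

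\emph{Monotonicity of $F$.} Because there are finitely many interaction points, the set of corresponding values of $\tau$ is a finite set $\tau^1<\dots<\tau^m$. Between consecutive values, $L$, $Q$ and $P$ are constant: $P$ changes only at the $\tau_{l\Delta t}$, and wavefronts do not cross one another away from interaction points. We then induct on $j$. Assume $F(\tau^{j}-)\le F(0)<\varepsilon_{\rm g}$. Proposition~\ref{prop:glimm-functional-decreasing} applies at $\tau^j$ and gives $F(\tau^j+)\le F(\tau^j-)\le F(0)$. The hypothesis $F<\varepsilon_{\rm g}$ is thus propagated, which closes the induction. If several interactions occur on the same line $\LL_{\tau^j}$, they are at distinct points, and we apply the proposition to each one successively, noting that the estimates in its proof are local. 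This gives $F(\tau+)\le F(0)$ for all $\tau\le\tau_{\rm i}$.

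\emph{Smallness of the states.} Along $\LL_\tau$, we start from the state adjacent to the leading shock, which is $G(s^{\mu,\Delta t};U_0)$. We then pass across the fronts one at a time, and by Lemma~\ref{lem:bv-estimate} each jump is at most $C_1$ times the front strength. This gives
\begin{equation*}
|U^{\mu,\Delta t}-\overline{U}|\le |G(s^{\mu,\Delta t};U_0)-G(s_0;U_0)|+C_1L(\tau)\le C\epsilon_{\rm inv}+C_1F(0),
\end{equation*}
where the first term is bounded using Lemma~\ref{lem:strong-wave-interaction-estimate}.

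It remains to bound $F(0)$. We have $F(0)\le C\,(L(0)+P(0))$, where $L(0)$ comes from the waves generated at the first vertex and $P(0)\le TV(s^{\mu,\Delta t})\le TV(s)<\epsilon_{\rm inv}$ by Lemma~\ref{lem:approximate-leading-shock}(i). Hence $F(0)\le C\epsilon_{\rm inv}$. Choosing $\epsilon_{\rm inv}$ small then gives both $F(0)<\varepsilon_{\rm g}$ and $U^{\mu,\Delta t}|_{\LL_\tau}\in O_{\varepsilon_0}(\overline U)$. The same chain bounds $u_{\rm p}^{\mu,\Delta t}$ on the boundary.

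The main obstacle is the finiteness step: we must exclude accumulation of interactions using only regular interactions and the non-splitting rule for newly generated rarefactions.
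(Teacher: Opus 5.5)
Your proposal is correct and follows essentially the same route as the paper, which observes only that regular interactions of different families preserve the front count and then applies Proposition~\ref{prop:glimm-functional-decreasing} inductively; you additionally justify why interactions cannot accumulate (each $1$-front/$2$-front pair crosses at most once, and only finitely many vertices generate fronts) and why $F(0)$ and the states on $\LL_\tau$ are small. A minor point: $\LL_0$ meets the region only at the origin, so $L(0)=Q(0)=0$ and $F(0)=K_2P(0)\le K_2\,TV(s)$, which is where your bound $F(0)\le C\epsilon_{\rm inv}$ actually comes from.
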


\subsection{Wavefronts of the same family are separated}
In this subsection, we show that there exist sufficiently small constants $\epsilon_{\rm inv}$ and $\Lambda$ such that, 
for any $T>0$ and sufficiently small $\Delta t$, the wavefronts of the same family remain separated. 
In particular, the set of irregular interaction points is empty.
Throughout this subsection and the remainder of the paper, we regard a characteristic curve as a rarefaction front of zero strength.

We prove by contradiction. Assume that there are two wavefronts of the same family that 
intersect at \((\bar{x},\bar{t})\in\LL_{\tau_{\rm i}}\) for some $\tau_{\rm i}\in(0,\infty)$. 
Then we take 
\begin{equation*}
\tau_{\rm c}=\inf\left\{\tau>0\, :\,\, 
\begin{aligned}
&\text{two wavefronts of the same family}\\
&\text{intersect on  }\LL_{\tau}\text{ with }x>0
\end{aligned}\right\},
\end{equation*}
and then there is no such intersection on $\LL_{\tau}$ for $\tau<\tau_{\rm c}$.

Denote
\begin{equation}\label{eq:lambda-range}
\begin{aligned}
\lambda_*  ={}&\inf_{U\in O_{\varepsilon_0}(\overline{U})}\{|\lambda_1(U)|,\,|\lambda_2(U)|\}-\varepsilon_0,\\
\lambda^*={}&\sup_{U\in O_{\varepsilon_0}(\overline{U})}\{|\lambda_1(U)|,\,|\lambda_2(U)|\}+\varepsilon_0.
\end{aligned}
\end{equation}
We know that, if $\varepsilon_0$ is sufficiently small, then $\lambda^*>\lambda_*>0$.

To proceed further, let $x=\mathcal{C}_1(t)$ be a generalized 1-characteristic curve such that 
\begin{equation*}
\dot{\mathcal{C}}_1(t)\in [\lambda_1(U^{\mu,\Delta t}(\mathcal{C}_1(t)+,t)),\,\lambda_1(U^{\mu,\Delta t}(\mathcal{C}_1(t)-,t))];
\end{equation*}
see also \cite[(10.48)]{Bressan2000}. 
Assume that the generalized $1$-characteristic curve $x=\mathcal{C}_1(t)$ intersects the approximate leading shock at time $t_1$. 
Then we define
\begin{align*}
L_2(\tau;\,\mathcal{C}_1,t_1)={}&\sum_{\substack{\alpha_2\text{ crosses }\LL_{\tau}\text{ and}\\
\text{strictly above }x\,=\,\mathcal{C}_1(t),\,t\ge t_1}}|\alpha_2|,\\
F_2(\tau;\,\mathcal{C}_1,t_1)={}&L_2(\tau;\,\mathcal{C}_1,t_1)+K_1Q(\tau)+K_2P(\tau).
\end{align*}
Next, we show 
\begin{lemma}\label{lem:refined-estimates}
Let $K_1$, $K_2$, and $\varepsilon_{\rm g}$ be taken as in {\rm Proposition \ref{prop:glimm-functional-decreasing}}. 
Then
\begin{equation*}
F_2(\tau+;\,\mathcal{C}_1,t_1) \leq F_2(\tau-;\,\mathcal{C}_1,t_1)\qquad\mbox{for \(\, 0<\tau<\tau_{\rm c}\)},
\end{equation*}
\end{lemma}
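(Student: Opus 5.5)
We will follow the case analysis in the proof of Proposition~\ref{prop:glimm-functional-decreasing}, tracking only the $2$-waves that lie strictly above the generalized $1$-characteristic $x=\mathcal{C}_1(t)$, $t\ge t_1$. For $0<\tau<\tau_{\rm c}$, no two fronts of the same family meet on $\LL_\tau$. Hence every interaction point on $\LL_\tau$ is regular, and Corollary~\ref{coro:estimate-glimm-functional} applies. In particular $F(\tau-)\le F(0)<\varepsilon_{\rm g}$ and $L(\tau-)\le\varepsilon_{\rm g}$, and these are the only smallness facts we will need. Since $F_2$ differs from $F$ only in its linear part, and $L_2\le L$, the quadratic and shock-variation terms $K_1Q$ and $K_2P$ give exactly the same negative contributions as before. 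It therefore suffices to bound the jump of $L_2$ at each regular interaction point by the available negative terms $-K_1|\alpha||\varpi|$ or $-K_2|s^{\mu,\Delta t,h+1}-s^{\mu,\Delta t,h}|$.

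We split into cases according to where the interaction point $(\bar x,\bar t)\in\LL_\tau$ lies relative to the curve $\mathcal{C}_1$.
\begin{enumerate}
\item[(a)] If $(\bar x,\bar t)$ lies strictly above $\mathcal{C}_1$ and is an interior interaction of $\alpha_1$ with $\varpi_2$, Lemma~\ref{lem:wave-interaction-estimate-1} gives $|\beta_2|\le|\varpi_2|+C|\alpha_1||\varpi_2|$. The outgoing $2$-wave stays above $\mathcal{C}_1$ locally, so $L_2$ increases by at most $C|\alpha_1||\varpi_2|$. This is absorbed by $-K_1(1-CL)|\alpha_1||\varpi_2|$ exactly as in Case~1 of Proposition~\ref{prop:glimm-functional-decreasing}.
\item[(b)] If $(\bar x,\bar t)$ lies strictly below $\mathcal{C}_1$, then $L_2$ does not change, while $Q$ and $P$ behave as before, so $F_2$ does not increase.
\item[(c)] If $(\bar x,\bar t)$ lies on $\mathcal{C}_1$, the characteristic is either a $1$-front or passes through the interaction. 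We choose the generalized characteristic after the interaction to follow the outgoing $1$-front, or to lie between the outgoing $1$- and $2$-waves when it passes through. Then the outgoing $2$-wave $\beta_2$, which moves backward in the sense of the $\bm n_{\LL}$ foliation, lies above $\mathcal{C}_1$ only if the incoming $\varpi_2$ was already counted. The estimate therefore reduces to (a) or (b).
\item[(d)] If the interaction is at the leading shock, the new $\alpha_2$ emanates at times $k\Delta t\ge t_1$ and lies above $\mathcal{C}_1$. Its strength is bounded by $C|s^{\mu,\Delta t,h+1}-s^{\mu,\Delta t,h}|$ via Lemmas~\ref{lem:inverse-riemann-problem} and~\ref{lem:strong-wave-interaction-estimate}, which $K_2P$ absorbs as in Case~2.
\item[(e)] If the interaction is at the boundary $x=0$, then $L_2$ can only decrease.
\end{enumerate}

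The main obstacle is case (c): making precise that the region \emph{strictly above} $\mathcal{C}_1$ is not entered by new $2$-waves except through interactions already paid for by $Q$ or $P$. We would handle it using $\dot{\mathcal{C}}_1\in[\lambda_1(U^+),\lambda_1(U^-)]<0<\lambda^\n<\lambda_2$ from \eqref{eq:separate}. A $2$-front crossing $\mathcal{C}_1$ would have to travel from below to above along increasing $\tau$. Since $2$-fronts propagate backward relative to $\LL_\tau$ while $\mathcal{C}_1$ propagates forward, such a crossing happens only at an interaction point on $\mathcal{C}_1$. There the outgoing $2$-strength is controlled by Lemma~\ref{lem:wave-interaction-estimate-1} by the incoming $2$-strength plus $C|\alpha_1||\varpi_2|$. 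Summing over the finitely many interaction points on $\LL_\tau$, which is legitimate by Corollary~\ref{coro:estimate-glimm-functional}, yields $F_2(\tau+;\mathcal{C}_1,t_1)\le F_2(\tau-;\mathcal{C}_1,t_1)$.
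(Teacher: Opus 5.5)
Your decomposition matches the paper's: every event reduces to Proposition~\ref{prop:glimm-functional-decreasing} except a $2$-front meeting $x=\mathcal{C}_1(t)$. At exactly that step, however, your geometry is backwards. Write $\tau=t-x/\lambda^\n$. Along a curve of speed $\lambda$ one has $\dd\tau/\dd t=1-\lambda/\lambda^\n$. This is positive on $\mathcal{C}_1$ (speed $<0$) and negative on $2$-fronts (speed $\lambda_2>\lambda^\n$ by \eqref{eq:separate}). So as $\tau$ increases, the point $\mathcal{C}_1\cap\LL_\tau$ moves up along $\LL_\tau$ while every $2$-front moves down. A $2$-front meeting $\mathcal{C}_1$ therefore always passes from \emph{above} to \emph{below}. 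At such a point the incoming $\varpi$ is counted in $L_2(\tau-)$ and the outgoing $2$-wave is not counted in $L_2(\tau+)$. Hence $\Delta L_2=-|\varpi|$, while $K_1\Delta Q+K_2\Delta P\le 0$ exactly as in Case~1 of the Proposition. That is the paper's whole proof.

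Your final paragraph asserts the opposite: that a $2$-front crossing $\mathcal{C}_1$ travels from below to above. You then propose to control this with Lemma~\ref{lem:wave-interaction-estimate-1}. Under that geometry the argument does not close. The incoming $\varpi_2$ is absent from $L_2(\tau-)$ while the outgoing $\beta_2$ is present in $L_2(\tau+)$. So $L_2$ jumps by the full $|\beta_2|=|\varpi_2|+O(1)|\alpha_1||\varpi_2|$, a first-order quantity that neither $-K_1|\alpha_1||\varpi_2|$ nor $K_2P$ can absorb.

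Your case~(c) instead claims that the outgoing $\beta_2$ lies above $\mathcal{C}_1$ only if $\varpi_2$ was already counted. The only justification offered is the contradictory final paragraph. The reduction ``to (a) or (b)'' also misses the case that actually occurs: incoming counted, outgoing not. What you need is the directional fact above.

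Note also that the paper relies on the exact decrease $\Delta L_2=-|\varpi|$, not merely a bound $\Delta L_2\le C|\alpha_1||\varpi_2|$. Summing these decreases is what yields Lemma~\ref{coro:total-wave-cross-C1-bounded}, the $\varepsilon_{\rm g}$-bound on the total strength of $2$-fronts crossing $\mathcal{C}_1$.
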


\begin{proof}  The proof is similar to the proof of Proposition \ref{prop:glimm-functional-decreasing}, 
except for the case that a $2$-wave $\varpi$ on $\LL_{\tau}$ crosses $x=\mathcal{C}_1(t)$. 
In such a case,
\begin{equation*}
\Delta L_2(\tau;\,\mathcal{C}_1,t_1)= -|\varpi|, \qquad K_1\Delta Q(\tau)+K_2 \Delta P(\tau)\le 0.
\end{equation*} 
Then the proof is complete.
\end{proof}

Under the assumptions of Lemma \ref{lem:refined-estimates}, it is direct to show
\begin{lemma}\label{coro:total-wave-cross-C1-bounded}
The total strength of $2$-wavefronts that cross $x=\mathcal{C}_1(t)$ for \( t\geq t_1\) is bounded{\rm :}
\begin{equation*}
\sum_{\substack{\alpha_2\in\mathcal{R}\cup\mathcal{S}\text{ crossing }\\
	x\,=\, \mathcal{C}_1(t),\,t\ge t_1}}|\alpha_2|\le \varepsilon_{\rm g}
\end{equation*}
\end{lemma}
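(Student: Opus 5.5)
The plan is to use the monotone functional $F_2(\tau;\,\mathcal{C}_1,t_1)$ of Lemma \ref{lem:refined-estimates} as a potential that pays for each $2$-wavefront as it crosses $x=\mathcal{C}_1(t)$. We work on the range $0<\tau<\tau_{\rm c}$, where no irregular interaction points occur. On this range the construction proceeds only through the three regular cases, and the number of fronts on each $\LL_\tau$ is finite by Corollary \ref{coro:estimate-glimm-functional}. We also have $F(0)<\varepsilon_{\rm g}$, provided $\epsilon_{\rm inv}$ is small, since $F(0)$ is controlled by $K_2\,\mathrm{TV}(s^{\mu,\Delta t})\le K_2\,\mathrm{TV}(s)$ through Lemma \ref{lem:approximate-leading-shock}(i).

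\emph{Initial bound.} Let $\tau_1$ be the parameter of the line $\LL_{\tau_1}$ through $(\mathcal{C}_1(t_1),t_1)$. Since $F_2\le F$, we get
\[
F_2(\tau_1;\,\mathcal{C}_1,t_1)\le F(\tau_1)\le F(0)<\varepsilon_{\rm g}.
\]

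\emph{Accounting along the characteristic.} Because $\lambda_1<0<\lambda^\n<\lambda_2$ by \eqref{eq:separate}, the curve $x=\mathcal{C}_1(t)$ is transversal to each line $\LL_\tau$ and meets it exactly once. A $2$-front lies strictly above $\mathcal{C}_1$ before it reaches the curve, and it lies below afterwards. Each such crossing happens at a single parameter value $\tau$, either at an isolated point or as part of an interaction that also involves $\mathcal{C}_1$ when $\mathcal{C}_1$ runs along a $1$-front.

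At every crossing, the argument in Lemma \ref{lem:refined-estimates} gives
\[
F_2(\tau+)-F_2(\tau-)\le -|\varpi|,
\]
with no positive contribution from $Q$ or $P$. At all other regular points $F_2$ does not increase, by the same case analysis as in Proposition \ref{prop:glimm-functional-decreasing}. Newly generated $2$-waves from the shock for $t\ge t_1$ are counted in $L_2$ once they are created. Their strength is paid for by the decrease of $K_2P$, exactly as in Case 2 of that proposition.

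\emph{Telescoping.} We order the finitely many crossing parameters in any interval $[\tau_1,\tau]$ and sum the jumps. This gives
\[
\sum_{\text{crossings up to }\tau}|\varpi|\le F_2(\tau_1)-F_2(\tau+)\le F_2(\tau_1)\le\varepsilon_{\rm g},
\]
using $F_2\ge0$. The bound is uniform in $\tau<\tau_{\rm c}$, so letting $\tau\uparrow\tau_{\rm c}$ (or $\tau\to\infty$ when $\tau_{\rm c}=\infty$) yields the claim.

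The main obstacle is justifying that each crossing is registered as a strict loss $-|\varpi|$ even when the crossing point coincides with an interaction. This happens when $\mathcal{C}_1$ follows a $1$-front $\alpha_1$ and $\varpi$ hits that front. In that case the outgoing $2$-wave $\varpi'$ differs from $\varpi$ by $O(1)|\alpha_1||\varpi|$. I would handle it by combining the two estimates: the $Q$-decrease $-K_1|\alpha_1||\varpi|$ absorbs the error, and the outgoing $\varpi'$ lies below $\mathcal{C}_1$, so it is no longer in $L_2$. The net change is then at most $-|\varpi|+(C-K_1+CK_1L)|\alpha_1||\varpi|\le-|\varpi|$, by the choice of $K_1$ and $\varepsilon_{\rm g}$. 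The sum in the claim should be read as summing the incoming strengths $|\varpi|$.
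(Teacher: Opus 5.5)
Your proposal is correct and is essentially the paper's argument. The paper only says the lemma is direct from Lemma~\ref{lem:refined-estimates}, meaning: telescope the jumps $F_2(\tau+)-F_2(\tau-)\le-|\varpi|$ at each crossing and use $0\le F_2\le F\le F(0)<\varepsilon_{\rm g}$, which is exactly what you do. The paper leaves implicit the case where a crossing coincides with an interaction, i.e.\ when $\mathcal{C}_1$ runs along a $1$-front, as it does for $\mathcal{C}_1=\mathcal{X}_2$ in Proposition~\ref{prop:wave-front-separate}; your explicit treatment of that case is sound.
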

Similarly, we have 
\begin{lemma}\label{coro:total-wave-cross-C2-bounded}
The total strength of $1$-wavefronts that cross $x=\mathcal{C}_2(t)$ for \(t\geq\bar{t}\) is bounded{\rm :}
\begin{equation*}
\sum_{\substack{\varpi_1\in\mathcal{R}\cup\mathcal{S}\text{ crossing }\\
	x\,=\,\mathcal{C}_2(t),\,t\ge\bar{t}}}|\varpi_1|\le \varepsilon_{\rm g}
\end{equation*}
\end{lemma}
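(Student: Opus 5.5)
The plan is to mirror the proof of Lemma~\ref{coro:total-wave-cross-C1-bounded}, interchanging the roles of the two families. Let $x=\mathcal{C}_2(t)$ be a generalized $2$-characteristic through $(\bar x,\bar t)$, with
\[
\dot{\mathcal{C}}_2(t)\in[\lambda_2(U^{\mu,\Delta t}(\mathcal{C}_2(t)+,t)),\,\lambda_2(U^{\mu,\Delta t}(\mathcal{C}_2(t)-,t))].
\]
Since $\lambda_2>\lambda^\n$ by \eqref{eq:separate}, this curve is transversal to every $\LL_\tau$ and crosses each such line exactly once. I would therefore introduce the analogue of $L_2$: the quantity $L_1(\tau;\mathcal{C}_2,\bar t)$, defined as the sum of $|\varpi_1|$ over $1$-fronts crossing $\LL_\tau$ strictly on the appropriate side of $\mathcal{C}_2$ for $t\ge\bar t$. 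This is the side from which $1$-fronts, which travel with speed $\lambda_1<0<\lambda^\n$, move toward $\mathcal{C}_2$ as $\tau$ increases. Then set $F_1=L_1+K_1Q+K_2P$.

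The first step is to show that $F_1(\tau+)\le F_1(\tau-)$ for $0<\tau<\tau_{\rm c}$. I would go through the case list of Proposition~\ref{prop:glimm-functional-decreasing}. In interior interactions, $L_1$ grows by at most the growth of $L$, which is $C|\alpha||\varpi|$, and this is absorbed by $-K_1|\alpha||\varpi|$ from $Q$. When waves are generated at the shock, the increase in $L_1$ is at most $|\alpha_1|+|\alpha_2|\le C|\Delta s|$, which is absorbed by $K_2P$. A boundary impingement only removes fronts. The one new event is a $1$-front $\varpi_1$ crossing $\mathcal{C}_2$. At such a crossing, $L_1$ drops by $|\varpi_1|$ and $Q$ does not increase, because the crossing is merely an interaction with a characteristic, that is, a zero-strength front. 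Hence $\Delta F_1\le-|\varpi_1|$.

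The second step is to sum these decreases. Each crossing contributes $-|\varpi_1|$, so the total strength of $1$-fronts crossing $\mathcal{C}_2$ for $t\ge\bar t$ is at most $F_1$ evaluated just before the first crossing. That value is bounded by $F(0)$, or by the $F$-value on the relevant line, which is at most $\varepsilon_{\rm g}$ by Corollary~\ref{coro:estimate-glimm-functional}.

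The main point needing care is the geometry of the crossing. I need to confirm that $\mathcal{C}_2$ is well defined as a forward curve starting from $(\bar x,\bar t)$ for $t\ge\bar t$ until it reaches the leading shock or the region's boundary. I also need to check that every $1$-front crosses $\mathcal{C}_2$ at most once and in a single direction. Both facts follow from the strict speed separation $\lambda_1<0<\lambda^\n<\lambda_2$ in \eqref{eq:separate}, together with $\mathrm{H}_2$, which rules out irregular points before $\tau_{\rm c}$.
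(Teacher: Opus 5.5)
Your proposal is correct and is essentially the paper's argument. The paper obtains this lemma ``similarly'' to the $\mathcal{C}_1$ case: it mirrors Lemma~\ref{lem:refined-estimates} with the functional $L_1+K_1Q+K_2P$, where $L_1$ counts the $1$-fronts on the piston side of $\mathcal{C}_2$ along $\LL_\tau$, observes that each crossing lowers this functional by $|\varpi_1|$, and bounds the sum by $F(0)\le\varepsilon_{\rm g}$. One minor correction: when $\mathcal{C}_2$ runs along a genuine $2$-front (as in the application $\mathcal{C}_2=\mathcal{X}_1$), the crossing is a real interaction rather than one with a zero-strength front. The interaction estimate still gives $\Delta Q\le 0$, and hence $\Delta F_1\le-|\varpi_1|$, so your conclusion is unaffected.
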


Moreover, we can further show that the wavefronts of the same family are separated.
\begin{lemma}\label{lem:wave-size}
Assume that $\alpha_i(\tau)\in \mathcal{R}\cup\mathcal{S}$ which is initially generated on $\LL_{\tau_0}$. 
Define 
\begin{equation*}
L_{\alpha_i}(\tau) \coloneqq \sum_{\substack{\varpi\text{ crosses } \LL_\tau\\
	\varpi \in \mathcal{A}(\alpha_i)}} |\varpi(\tau)|.
\end{equation*}
If $\alpha_i(\tau)$ never intersects with the wavefronts of family $i$, 
then there is a sufficiently large constant $M$ such that, as $\varepsilon_{\rm g}$ is sufficiently small,
\begin{enumerate}
\item[\rm (i)] $\tau\mapsto |\alpha_i(\tau)|\exp(M(L_{\alpha_i}(\tau)+K_1 Q(\tau)+K_2 P(\tau)))$ 
is monotonically decreasing for $\tau>\tau_0$.
\item[\rm (ii)]  $\tau\mapsto |\alpha_i(\tau)|\exp(-M(L_{\alpha_i}(\tau)+K_1 Q(\tau)+K_2 P(\tau)))$ 
is monotonically increasing for $\tau>\tau_0$. 
\end{enumerate} 
\end{lemma}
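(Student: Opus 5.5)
Both statements follow from one comparison: at each interaction point on $\LL_\tau$, the change in $|\alpha_i|$ is at most a constant times the decrease of the exponent $E(\tau):=L_{\alpha_i}(\tau)+K_1Q(\tau)+K_2P(\tau)$. Both $|\alpha_i(\tau)|$ and $E(\tau)$ are piecewise constant in $\tau$. They jump only at regular interaction points, because by hypothesis $\alpha_i$ never meets a front of its own family. So it suffices to show, at each such $\tau$,
\begin{equation*}
\big|\,|\alpha_i(\tau+)|-|\alpha_i(\tau-)|\,\big|\le C\,|\alpha_i(\tau-)|\,\big(E(\tau-)-E(\tau+)\big),\qquad E(\tau+)\le E(\tau-).
\end{equation*}
Given this, with $M\ge 2C$ and $\varepsilon_{\rm g}$ small so that $ME\le 1$, the elementary inequalities $1+x\le e^{x}$ and $e^{-x}\le 1-x/2$ for small $x\ge0$ yield (i) and (ii) at each jump. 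Monotonicity then follows by chaining over the finitely many interaction points (Corollary \ref{coro:estimate-glimm-functional}).

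I would split into cases following Proposition \ref{prop:glimm-functional-decreasing}.

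\emph{Case (a): $\alpha_i$ itself interacts with a front $\varpi_j$, $j\ne i$.} Here $\varpi_j\in\mathcal{A}(\alpha_i)$ or $\alpha_i\in\mathcal{A}(\varpi_j)$ with respect to the ordering on $\LL_\tau$.
\begin{itemize}
\item By Lemma \ref{lem:wave-interaction-estimate-1}, the outgoing strength is $|\alpha_i|+O(1)|\alpha_i||\varpi_j|$.
\item If $\varpi_j\in\mathcal{A}(\alpha_i)$ before the interaction, then after crossing it no longer approaches $\alpha_i$, so $L_{\alpha_i}$ drops by $|\varpi_j|$ up to $O(1)|\alpha_i||\varpi_j|$.
\item In either ordering, $Q$ drops by $|\alpha_i||\varpi_j|\big(1-CL\big)$.
\end{itemize}
Hence $E$ decreases by at least $c|\varpi_j|$ in the first ordering. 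In the second, the quadratic drop in $K_1Q$ alone only controls $|\alpha_i||\varpi_j|$. That is still enough for the relative change $|\Delta|\alpha_i||/|\alpha_i|\le C|\varpi_j|$, provided I choose $M$ large relative to $1/K_1$.

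\emph{Case (b): interactions not involving $\alpha_i$.} These are interior interactions elsewhere, generation at the leading shock, and absorption at the boundary. Then $|\alpha_i|$ is unchanged, and I only need $E(\tau+)\le E(\tau-)$.
\begin{itemize}
\item For interior interactions, $L_{\alpha_i}$ grows by at most $C|\alpha||\varpi|$, which $K_1\Delta Q\le-\tfrac12 K_1|\alpha||\varpi|$ absorbs.
\item For shock generation, the new fronts may join $\mathcal{A}(\alpha_i)$. The increase $C|s^{k+1}-s^k|(1+K_1L)$ is absorbed by $K_2\Delta P$, as in Case 2 of Proposition \ref{prop:glimm-functional-decreasing} (with $K_2$ enlarged if needed).
\item Boundary absorption only decreases the terms.
\end{itemize}

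The main obstacle is Case (a), specifically keeping the bookkeeping of $L_{\alpha_i}$ consistent with the forward–backward geometry. I must check which fronts cross from ``approaching $\alpha_i$'' to ``not approaching'' along the $\bm n_{\LL}$-direction, and ensure no front enters $\mathcal{A}(\alpha_i)$ except through shock generation. The non-intersection hypothesis is essential here: without it, a same-family merger would change $|\alpha_i|$ by an $O(1)$ factor unrelated to $E$. I would verify this via the ordering on $\LL_\tau$ given by the definition of approaching, together with the speed separation \eqref{eq:separate}.
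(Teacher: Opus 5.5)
Your overall structure is the paper's: interactions not involving $\alpha_i$ leave $|\alpha_i|$ unchanged and do not increase the exponent, and a collision of $\alpha_i$ with a front of the other family is absorbed by the exponential weight. There is, however, a genuine gap in the subcase you call the ``second ordering.''

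\textbf{Why the second ordering fails.} There $\varpi_j\notin\mathcal{A}(\alpha_i)$. The only decrease of $E$ available to you is then about $K_1|\alpha_i||\varpi_j|$, while $|\alpha_i|$ may change by a factor $1+C|\varpi_j|$. Monotonicity of $|\alpha_i|\exp(ME)$ requires $1+C|\varpi_j|\le \exp\big(M(E(\tau-)-E(\tau+))\big)$. This amounts to $C|\varpi_j|\le MK_1|\alpha_i||\varpi_j|$, that is, $M\ge C/(K_1|\alpha_i|)$. Your own sufficient inequality would likewise need $|\alpha_i||\varpi_j|\le C K_1|\alpha_i|^2|\varpi_j|$. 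So no $M$ independent of the front works.

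Uniformity is exactly what is needed later. The lemma is applied to fronts of strength $O(\Lambda\Delta t)$ to get $\sigma_2(t)\le C\sigma_2(t_2)$, and again in Lemma \ref{coro:rarefaction-front-small}.

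Under the literal one-sided definition, the relation $(\alpha,\varpi)\in\mathcal{A}$ requires $i_\alpha>i_\varpi$. Hence $\mathcal{A}(\alpha_1)=\emptyset$ and $L_{\alpha_1}\equiv 0$. Your second ordering is precisely the case $i=1$. A weak $1$-front crossing $2$-fronts then has its strength changed by a generically nonzero multiple of $|\alpha_1||\varpi_2|$, while $E$ drops only by about $K_1|\alpha_1||\varpi_2|$. So this is not something a larger constant can repair.

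\textbf{The missing idea.} Take $\mathcal{A}(\alpha_i)$ to be the set of \emph{all} fronts approaching $\alpha_i$, in either role. For $\alpha_2$ these are the $1$-fronts below it on $\LL_\tau$. For $\alpha_1$ these are the $2$-fronts $\varpi$ above it, i.e.\ those with $(\varpi,\alpha_1)\in\mathcal{A}$. This is what the paper's proof uses when it writes $\Delta L_{\alpha_i}=-|\varpi|$ in its Case 2, for both families.

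As $\tau$ increases, $1$-fronts move toward larger $t$ along $\LL_\tau$ and $2$-fronts toward smaller $t$. Hence any front that collides with $\alpha_i$ lies in this set just before the collision and leaves it afterwards. The exponent therefore always drops by at least $|\varpi_j|(1-CL)$, and any $M$ somewhat larger than $C$, independent of $|\alpha_i|$, suffices. Your second subcase disappears.

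With this reading, the rest of your proposal goes through as written. This includes Case (b): new $2$-fronts emitted by the leading shock now do enter $\mathcal{A}(\alpha_1)$, and their contribution is absorbed by $K_2\Delta P$ as you describe. It also includes the elementary exponential inequalities under the smallness $ME\le 1$.
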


\begin{proof}
We trace the front and consider the interactions on $\LL_\tau$:

\medskip\noindent
\textit{Case 1. The interaction does not involve $\alpha_i$}: The interaction estimates give
\begin{equation*}
\Delta \alpha_i(\tau) = 0, \qquad \Delta L_{\alpha_i}(\tau) + K_1 \Delta Q(\tau)\le 0,
\end{equation*}
which implies
\begin{align*}
&\Delta\left(|\alpha_i(\tau)|\exp(M(L_{\alpha_i}(\tau)+K_1 Q(\tau)+K_2 P(\tau)))\right)\\
&\,\,\,=|\alpha_i(\tau-)|\,\Delta \exp(M(L_{\alpha_i}(\tau)+K_1 Q(\tau)+K_2 P(\tau)))\\
&\,\,\,\le 0,\\[2mm]
&\Delta\left(|\alpha_i(\tau)|\exp(-M(L_{\alpha_i}(\tau)+K_1 Q(\tau)+K_2 P(\tau)))\right)\\
&\,\,\,=|\alpha_i(\tau-)|\,\Delta \exp(-M(L_{\alpha_i}(\tau)+K_1 Q(\tau)+K_2 P(\tau)))\\
&\,\,\, \ge 0.
\end{align*}

\medskip
\noindent
\textit{Case 2. $\alpha_i$ collides with a weak wave $\varpi$ of the other family}: Then
\begin{equation*} 
\Delta L_{\alpha_i}(\tau) = -|\varpi|, \quad \Delta Q(\tau)< 0, \quad \Delta|\alpha_i(\tau)| \le C|\alpha_i(\tau-)||\varpi|.
\end{equation*}
This leads to 
\begin{align*}
&\Delta\big(|\alpha_i(\tau)|\exp(M(L_{\alpha_i}(\tau)+K_1 Q(\tau)+K_2 P(\tau)))\big)\\
&\,\,\,= |\Delta (\alpha_i(\tau))|\,\exp(M(L_{\alpha_i}(\tau+)+K_1 Q(\tau+)+K_2 P(\tau+)))\\
&\quad\,\,\,\,+|\alpha_i(\tau-)|\, \Delta \exp(M(L_{\alpha_i}(\tau)+K_1 Q(\tau)+K_2 P(\tau)))\\
&\,\,\,\le C|\alpha_i(\tau-)||\varpi|\,\exp(M(L_{\alpha_i}(\tau+)+K_1 Q(\tau+)+K_2 P(\tau+)))\\
&\quad\,\,\,\, +|\alpha_i(\tau-)|\exp(M(L_{\alpha_i}(\tau+)+K_1 Q(\tau+)+K_2 P(\tau+)))\big(1-\exp(M|\varpi|)\big),\\[2mm]
&\Delta\big(|\alpha_i(\tau)|\exp(-M(L_{\alpha_i}(\tau)+K_1 Q(\tau)+K_2 P(\tau)))\big)\\
&\,\,\,=|\Delta (\alpha_i(\tau))|\,\exp(-M(L_{\alpha_i}(\tau+)+K_1 Q(\tau+)+K_2 P(\tau+)))\\
&\quad\,\,\,\, +|\alpha_i(\tau-)|\, \Delta \exp(-M(L_{\alpha_i}(\tau)+K_1 Q(\tau)+K_2 P(\tau)))\\
&\,\,\,\ge -C|\alpha_i(\tau-)||\varpi|\,\exp(-M(L_{\alpha_i}(\tau+)+K_1 Q(\tau+)+K_2 P(\tau+)))\\
&\quad\,\,\,\, +|\alpha_i(\tau-)|\exp(-M(L_{\alpha_i}(\tau+)+K_1 Q(\tau+)+K_2 P(\tau+)))\big(1-\exp(-M|\varpi|)\big).
\end{align*}
Therefore, noting that $\frac{x}{\ee}\le 1-\ee^{-x}$ for $0\le x\le 1$ and that $1-\ee^x\le -x$ for $x\ge 0$, 
as $M$ is sufficiently large and $|\varpi|$ is sufficiently small,
we have
\begin{align*}
&\Delta\big(|\alpha_i(\tau)|\exp(M(L_{\alpha_i}(\tau)+K_1 Q(\tau)+K_2 P(\tau)))\big)\\
&\,\,\, \le  \left((C-M)|\alpha_i(\tau-)||\varpi|\right)\, \exp(M(L_{\alpha_i}(\tau+)+K_1 Q(\tau+)+K_2 P(\tau+)))\\
&\,\,\,  <0,\\[2mm]
&\Delta\big(|\alpha_i(\tau)|\exp(-M(L_{\alpha_i}(\tau)+K_1 Q(\tau)+K_2 P(\tau)))\big)\\
&\,\,\,\ge
\big((-C+M/\ee)|\alpha_i(\tau-)||\varpi|\big)\, \exp(-M(L_{\alpha_i}(\tau+)+K_1 Q(\tau+)+K_2 P(\tau+)))\\
&\,\,\,>0.
\end{align*}
The proof is complete.
\end{proof}

We are now ready to verify that, for suitably small $\epsilon_{\rm inv}$ and $\Lambda$, and for any $T>0$, 
the wavefronts of the same family in $U^{\mu,\Delta t}$ remain separated, provided that $\Delta t$ is sufficiently small.
Suppose that two wavefronts $\mathcal{X}_1$ and $\mathcal{X}_2$ intersect with each other on $\LL_{\tau_{\rm c}}$. Then
the interaction may occur in one of the following cases:
\begin{itemize}
\item [\rm(i)]  at least one of $\mathcal{X}_1$ and $\mathcal{X}_2$ is a $1$-shock issuing from the leading shock;
\item [\rm(ii)] both $\mathcal{X}_1$ and $\mathcal{X}_2$ are $1$-rarefaction fronts issuing from the leading shock;
\item [\rm(iii)] at least one of $\mathcal{X}_1$ and $\mathcal{X}_2$ is a $2$-shock issuing from the leading shock;
\item [\rm(iv)] both $\mathcal{X}_1$ and $\mathcal{X}_2$ are $2$-rarefaction fronts issuing from the leading shock.
\end{itemize}

Since any backward $2$-shock front never interacts with another backward $2$-wavefront (due to the entropy condition for shock waves), 
and any two forward $1$-rarefaction fronts never interact by construction of the rarefaction fronts 
in \eqref{eq-accurate-riemann-solver}, it suffices to rule out cases {\rm(i)} and {\rm(iv)}.
Without loss of generality, we consider case {\rm(i)}. Suppose that $\mathcal{X}_1$ is a $1$-rarefaction 
(or $1$-characteristic) front with strength  $\sigma_1(t)$, $\mathcal{X}_2$ is a $1$-shock front with strength 
$\sigma_2(t)$.
Let $(x_1, t_1)$ and $(x_2, t_2)$ denote their respective points of generation on the approximate leading shock; 
see Fig.~\ref{fig:shock-wave-no-interaction}. By relabeling if necessary, we may assume that
\begin{equation*}
t_2=t_1+\Delta t.
\end{equation*} 

\begin{figure}[ht]
\centering  
\includegraphics[width=0.80\textwidth]{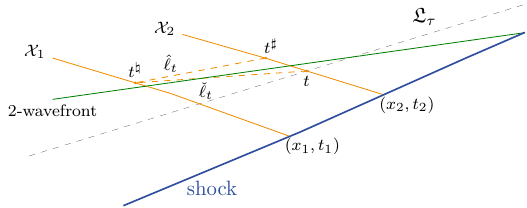}
\caption{A $1$-shock front does not intersect with another 1-wavefront}
\label{fig:shock-wave-no-interaction}
\end{figure}

We have 
\begin{proposition}\label{prop:wave-front-separate}
As $\Lambda$ and $\epsilon_{\rm inv}$ are suitably small, $\mathcal{X}_1$ and $\mathcal{X}_2$ do not intersect on \(\LL_{\tau_{\rm c}}\).
\end{proposition}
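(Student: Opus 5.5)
We argue by comparing the speeds of the two fronts on their lifespan up to $\LL_{\tau_{\rm c}}$, using the strength estimates of Lemma~\ref{lem:wave-size}. Since $\mathcal{X}_2$ is a $1$-shock issuing at $(x_2,t_2)$ with $t_2=t_1+\Delta t$, Lemma~\ref{lem:strong-wave-interaction-estimate}(ii) shows that it arises only when $s^{\mu,\Delta t}$ jumps down at $t_2$. The Ole\u{\i}nik-type bound of Lemma~\ref{lem:approximate-leading-shock}(ii) then gives $|\sigma_2(t_2)|\le C\Lambda\Delta t/(1+t_1)$. The front $\mathcal{X}_1$ is a $1$-rarefaction or characteristic front with $\sigma_1\ge 0$. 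For $\tau<\tau_{\rm c}$ no same-family intersections occur, so Lemma~\ref{lem:wave-size} applies to both fronts. Their strengths therefore remain comparable to their initial strengths up to the factor $\exp(\pm M\varepsilon_{\rm g})$, and in particular $|\sigma_2(t)|\le C\Lambda\Delta t/(1+t_1)$ for all later times.

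Next we estimate the initial horizontal gap and the rate of closure. Along $\LL$-lines, the point $(x_2,t_2)$ lies ahead of $\mathcal{X}_1$ by a distance of order $(\lambda_*+s_0)\Delta t$. This holds because $\mathcal{X}_1$ moves with speed $\approx\lambda_1<0$ while the leading shock moves with speed $\approx s_0>0$, so at time $t_2$ we have $x_2-\mathcal{C}_1(t_2)\ge c_*\Delta t$ with $c_*>0$ depending only on \eqref{eq:lambda-range}.

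Between the two fronts, the states differ by the $1$-waves emitted from the shock on $(t_1,t_2]$ together with the $2$-waves crossing in between. The speed difference $\dot{\mathcal{X}}_1-\dot{\mathcal{X}}_2$ is controlled by $\lambda_1(U^{\mu,\Delta t})$ on the two sides. Using \eqref{eq:parameterization} and the fact that a shock speed is the average of the side speeds up to quadratic error, we get
\[
\dot{\mathcal X}_1-\dot{\mathcal X}_2\le \tfrac12|\sigma_2|+C\,|\text{$2$-waves crossing the strip}|\cdot(\text{local state difference}).
\]
The state difference across the strip is $O(|\sigma_1|+|\sigma_2|)$. By Lemma~\ref{coro:total-wave-cross-C1-bounded}, each crossing $2$-wave $\varpi$ changes the gap speed only by $C|\varpi|(|\sigma_1|+|\sigma_2|)$, and the total of such $\varpi$ is at most $\varepsilon_{\rm g}$.

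Integrating in $t$ is the main obstacle, since the horizon is unbounded. Our first attempt is the weighted bound $\int_{t_2}^{t}\big(C\Lambda\Delta t/(1+t_1)+\dots\big)\dd t'$, which grows like $\Lambda\Delta t\,(t-t_1)/(1+t_1)$. This is not bounded by $c_*\Delta t$ as $t\to\infty$, so we must instead use decay along the trajectory. Here the $L_{\alpha_i}$ weight in Lemma~\ref{lem:wave-size} helps, but the key mechanism is the compression caused by the converging $2$-waves. We expect to replace the naive linear growth by comparing the gap with the gap between the $1$-characteristics issuing from $t_1$ and $t_2$. The rarefaction and characteristic gap grows at rate $\ge c\,\sigma_1$, and we need to show this dominates.

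Concretely, we will compare the gap with the reference profile $\Delta t\,(1+t)/(1+t_1)$ times a small constant. Its derivative $\Delta t/(1+t_1)$ matches the closure rate $C\Lambda\Delta t/(1+t_1)$ exactly when $\Lambda$ is small. The corrections from crossing $2$-waves are absorbed using $\varepsilon_{\rm g}$, which is itself controlled by $\epsilon_{\rm inv}$ via Corollary~\ref{coro:estimate-glimm-functional}. A Gronwall/comparison argument then yields $x_{\mathcal X_2}(t)-x_{\mathcal X_1}(t)>0$ up to $\LL_{\tau_{\rm c}}$, which contradicts the assumed intersection.

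Case (iv) will be handled symmetrically afterwards, with Lemma~\ref{coro:total-wave-cross-C2-bounded} in place of Lemma~\ref{coro:total-wave-cross-C1-bounded}.
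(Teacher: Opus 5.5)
Your preliminary steps match the paper: the bound $\sigma_2\le C\Lambda\Delta t/(1+t_1)$ from Lemma~\ref{lem:approximate-leading-shock}(ii), persistence of strengths via Lemma~\ref{lem:wave-size}, and an initial gap of order $\Delta t$. However, the step you flag as the main obstacle is resolved incorrectly, and this is a genuine gap.

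\textbf{The horizon is bounded.} By definition, $\tau_{\rm c}$ only concerns intersections with $\bar x>0$. The front $\mathcal{X}_2$ is a $1$-front with $\dot{\mathcal{X}}_2\le-\lambda_*$, starting from $x_2<(s_0+\varepsilon_0)t_2$. Hence any intersection must occur at $\bar t\le\tilde t_2=t_2(\lambda_*+s_0+\varepsilon_0)/\lambda_*$, i.e.\ within time $O(t_2)$ after generation; after that the front has been absorbed at the piston. Therefore
$\int_{t_2}^{\bar t}\sigma_2\,\dd z\le(\tilde t_2-t_2)\,C\Lambda\Delta t/(1+t_1-\Delta t)\le C\Lambda\Delta t$, uniformly in $t_1$. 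The lifespan $O(1+t_1)$ exactly cancels the decay $1/(1+t_1)$, and this is why the weight $(1+t)$ appears in \eqref{eq:q-condition}. So your ``first attempt'' is essentially the paper's argument once you use this bounded horizon.

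\textbf{Your substitute mechanism cannot work.} First, $\mathcal{X}_1$ may be a characteristic front with $\sigma_1=0$, so there is no spreading rate $c\,\sigma_1$ to dominate anything. Second, the reference profile $\kappa\Delta t(1+t)/(1+t_1)$ is increasing. The only information you have is the negative lower bound $\dot{\DD}\ge -C\Lambda\Delta t/(1+t_1)$. No comparison principle keeps the gap above an increasing barrier from that; the two rates have opposite signs and cannot ``match''.

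\textbf{The crossing $2$-waves are also mis-estimated.} There are no $1$-waves between the fronts. So the difference of $\lambda_1$ on the inner sides of $\mathcal{X}_2$ and $\mathcal{X}_1$ is the sum of the $\lambda_1$-jumps of the $2$-waves currently inside the strip. For the $p$-system ($\lambda_1=-\lambda_2$) each such wave contributes $\pm|\varpi|$, which is linear in $|\varpi|$, not $C|\varpi|(|\sigma_1|+|\sigma_2|)$. $2$-shocks in the strip only widen the gap, while $2$-rarefactions narrow it.

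The paper controls the $2$-rarefactions as follows. Each one is counted only for a $t$-interval of length $O(\varepsilon_0)\DD_1^*$, since the lines $\check\ell_t$ are nearly parallel to $2$-characteristics. This turns their contribution into a term $C\varepsilon_0\mathfrak{B}^\star(t)\DD_1^*(t)$, which is proportional to the gap, with $\int\mathfrak{B}^\star\le C\varepsilon_{\rm g}$ by Lemma~\ref{coro:total-wave-cross-C1-bounded}. Gr\"onwall then gives a multiplicative factor $\ee^{-C\varepsilon_0\varepsilon_{\rm g}}$ instead of an additive error. Combined with the bounded horizon and $\DD_1^*(t_2)\ge c\,\Delta t$, this yields $\DD_1^*(\bar t)\ge \ee^{-C\varepsilon_0\varepsilon_{\rm g}}\,c\,\Delta t-C\Lambda\Delta t>0$ for $\Lambda$ small.
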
 

\begin{proof} 
We know that there is no $1$-wavefront emanating from the leading shock between $\mathcal{X}_1$ and $\mathcal{X}_2$; otherwise, 
an intersection would occur on some $\LL_{\tilde{\tau}}$ with $\tilde{\tau}<\tau_{\rm c}$, contradicting the definition of $\tau_{\rm c}$.

Let $\check{\ell}_t$ be the line passing through $\mathcal{X}_2(t)$ with slope $\lambda^*$ (see \eqref{eq:lambda-range}).
This line intersects $\mathcal{X}_1$ at point $(\mathcal{X}_1(t^\n),t^\n)$.
Similarly, let $\hat{\ell}_t$ denote the line passing through $\mathcal{X}_1(t^\n)$ with slope $\lambda_*$, and
let $(\mathcal{X}_2(t^\s),t^\s)$ be its intersection point with $\mathcal{X}_2$. 

With a slight abuse of notation, we regard $\mathcal{X}_1$ and $\mathcal{X}_2$ as the corresponding maps: $\mathbb{R}_+\to \mathbb{R}$ 
that assign, to each time $t$, the $x$-coordinate of the wavefront position.
As in \cite{glimm1970,glass2007}, we define the \textit{horizontal} distance between $\mathcal{X}_1(t)$ and $\mathcal{X}_2(t)$ by
\begin{equation*}
\DD_1^*(t)=\mathcal{X}_2(t)-\mathcal{X}_1(t^\n).
\end{equation*}
Differentiating this with respect to $t$, we obtain 
\begin{equation*}
\frac{\dd}{\dd t}\DD_1^*(t)=\dot{\mathcal{X}_2}(t)-\dot{\mathcal{X}_1}(t^\n)\,\frac{\dd t^\n}{\dd t}.
\end{equation*}
Note that 
\begin{equation}\label{eq:t-flat}
\mathcal{X}_2(t)-\mathcal{X}_1(t^\n)=\lambda^*(t-t^\n),\quad
\mathcal{X}_2(t^\s)-\mathcal{X}_1(t^\n)=\lambda_*(t^\s-t^\n),
\end{equation}
which implies
\begin{equation*}
\frac{\dd t^\n}{\dd t}=\frac{\dot{\mathcal{X}_2}(t)-\lambda^*}{\dot{\mathcal{X}_1}(t^\n)-\lambda^*},\quad
\frac{\dd t^\s}{\dd t^\n}=\frac{\dot{\mathcal{X}_1}(t^\n)-\lambda_*}{\dot{\mathcal{X}_2}(t^\s)-\lambda_*}.
\end{equation*}
Therefore, we have
\begin{equation*}
\frac{\dd t^\s}{\dd t}=\frac{\dot{\mathcal{X}_2}(t)-\lambda^*}{\dot{\mathcal{X}_1}(t^\n)-\lambda^*}\, \frac{\dot{\mathcal{X}_1}(t^\n)-\lambda_*}{\dot{\mathcal{X}_2}(t^\s)-\lambda_*}>0,
\end{equation*}
so that
\begin{align}\label{eq:derivative-D}
\begin{aligned}
\frac{\dd\DD_1^*}{\dd t}={}&(\dot{\mathcal{X}_2}(t)-\dot{\mathcal{X}_1}(t^\n))\, \frac{\lambda^*}{\lambda^*-\dot{\mathcal{X}_1}(t^\n)}\\
={}&\frac{1}{2}\big(\dot{\mathcal{X}_2}(t)-\dot{\mathcal{X}_1}(t^\n)\big)\big(1+O(1)\varepsilon_0\big).
\end{aligned}  
\end{align}
As $\mathcal{X}_1$ is a $1$-rarefaction front and $\mathcal{X}_2$ is a $1$-shock front, from \eqref{eq-wave-curve} 
and the construction in \eqref{eq-accurate-riemann-solver}, we have 
\begin{align*}
\dot{\mathcal{X}_2}(t)-\dot{\mathcal{X}_1}(t^\n)
={}& \big(\dot{\mathcal{X}_2}(t)-\lambda_1(U^{\mu,\Delta t}(\mathcal{X}_2(t)-,t))\big)
-\big(\dot{\mathcal{X}_1}(t^\n)-\lambda_1(U^{\mu,\Delta t}(\mathcal{X}_1(t^\n)+,t^\n))\big)\\
&+\lambda_1(U^{\mu,\Delta t}(\mathcal{X}_2(t)-,t))-\lambda_1(U^{\mu,\Delta t}(\mathcal{X}_1(t^\n)+,t^\n))\\
={}&-\frac{1}{2}\left(\sigma_2(t)+O(1)\sigma_2^2(t)\right)+\sigma_1(t^\n)\\
&+\lambda_1(U^{\mu,\Delta t}(\mathcal{X}_2(t)-,t))-\lambda_1(U^{\mu,\Delta t}(\mathcal{X}_1(t^\n)+,t^\n)).
\end{align*}

Let $(x_\alpha, t_\alpha)$ denote the point at which wavefront $\alpha$ intersects line $\check{\ell}_t$; 
see Fig.~ \ref{fig:shock-wave-no-interaction}. 
Then 
\begin{align*}
&\lambda_1(U^{\mu,\Delta t}(\mathcal{X}_2(t)-,t))-\lambda_1(U^{\mu,\Delta t}(\mathcal{X}_1(t^\n)+,t^\n))\\
&=\sum_{ \substack{\text{wavefronts } \alpha\\\text{ crossing } \check{\ell}_t }}\lambda_1(U^{\mu,\Delta t}(x_\alpha+,t_\alpha))
-\lambda_1(U^{\mu,\Delta t}(x_\alpha-,t_\alpha))  \\
&=\sum_{ \substack{\varpi_2\in\mathcal{S}\\
	\text{crossing } \check{\ell}_t }}|\varpi_2|
	-\sum_{ \substack{\alpha_2\in\mathcal{R}\\
	\text{crossing } \check{\ell}_t }}|\alpha_2| 
	\\
	&\ge -\sum_{ \substack{\alpha_2\in\mathcal{R}\\
	\text{crossing } \check{\ell}_t }}|\alpha_2|.
\end{align*}
Consequently, we obtain 
\begin{align*}
\frac{\dd\DD_1^*}{\dd t}
\ge{}&\frac{1}{2}\bigg(-\frac{1}{2}\sigma_2(t)-\sum_{ \substack{\alpha_2\in\mathcal{R}\\
	\text{crossing } \check{\ell}_t }}|\alpha_2| \bigg)\, \big(1+O(1)\varepsilon_0\big)\\
	&+O(1) \frac{1}{2}\big(1+O(1)\varepsilon_0\big)\sigma_2^2(t) \\
	\ge{}&-\frac{1}{2}\big(1+O(1)\varepsilon_0\big)\bigg(\sigma_2(t)+\sum_{ \substack{\alpha_2\in\mathcal{R}\\
	\text{crossing } \check{\ell}_t }}|\alpha_2|\bigg),
\end{align*}
where we have used the fact that $\sigma_1$ and $\sigma_2$ are sufficiently small.

Since no $1$-shock front lies between $\mathcal{X}_1$ and $\mathcal{X}_2$, by Lemma \ref{lem:wave-size}, we have
\begin{equation*}
\sum_{ \substack{\alpha_2\in\mathcal{R}\\
	\text{crossing } \check{\ell}_t }}|\alpha_2|\leq C\sum_{ \substack{\alpha_2\in\mathcal{R}\\
	\text{crossing } \{(\mathcal{X}_2(r), r)\,:\,\,t\le r<t^\s\} }}|\alpha_2|.
\end{equation*}

Let $\alpha$ denote the $2$-rarefaction fronts that cross $\{x=\mathcal{X}_2(z),\,z\ge t_2\}$, at time $t_{\alpha}$, respectively. 
Moreover, let $t_{\alpha}^\f$ be the time such that $(t_{\alpha}^\f)^\s=t_{\alpha}$ when $\alpha\in\mathcal{R}$. Then
\begin{align}\label{eq:derivative-D-1}
\sum_{t\le t_{\alpha_2}<t^\s}|\alpha_2|
=\sum_{\substack{\alpha_2\in\mathcal{R}\\
	\text{crossing } \check{\ell}_t}} |\alpha_2|\,\mathds{1}_{(t_{\alpha_2}^\f,t_{\alpha_2}]}(t).
\end{align}

From \eqref{eq:t-flat}, we deduce 
\begin{equation*}
\frac{\lambda^*-\lambda_*}{\lambda^*+\lambda_*}\,(t-t^\n)\le t^\s-t\le \frac{\lambda^*-\lambda_*}{2\lambda_*}\,(t-t^\n).
\end{equation*}
As a result, $t_\alpha^\s-t_\alpha=O(\varepsilon_0)\DD_1^*(t_{\alpha})$. Recalling \eqref{eq:derivative-D}, we have
\begin{equation*}
\DD_1^*(z)=\DD_1^*(t_{\alpha}^\f)+O(\varepsilon_0)(z-t_{\alpha}^\f)=\big(1+O(\varepsilon_0^2)\big)\DD_1^*(t_{\alpha}^\f)
\qquad\mbox{for $z\in(t_{\alpha}^\f,t_{\alpha}]$}.
\end{equation*}

Now, \eqref{eq:derivative-D-1} becomes 
\begin{equation*}
\sum_{t\le t_{\alpha_2}<t^\s}|\alpha_2|
=\sum_{\substack{\alpha_2\in\mathcal{R}\\
	\text{crossing } \check{\ell}_t}} |\alpha_2|\,\frac{\mathds{1}_{(t_{\alpha_2}^\f,t_{\alpha_2}]}(t)}{t_{\alpha_2}-t_{\alpha_2}^\f}\, O(\varepsilon_0)\DD_1^*(t),
\end{equation*}
which gives
\begin{equation}\label{eq:no-interaction-1-shock-0}
\begin{aligned}
\frac{\dd\DD_1^*(t)}{\dd t}
\ge{}&-\sigma_2(t)\, \frac{1}{2}(1+O(1)\varepsilon_0)\\
&-C\varepsilon_0\sum_{\substack{\alpha_2\in\mathcal{R}\\
		\text{crossing } \check{\ell}_t}} |\alpha_2|\,\frac{\mathds{1}_{(t_{\alpha_2}^\f,t_{\alpha_2}]}(t)}{t_{\alpha_2}-t_{\alpha_2}^\f}\,\DD_1^*(t).
		\end{aligned}
\end{equation}

Let 
\begin{equation*}
\tilde{t}_2= t_2\, 
\frac{\lambda_*+s_0+\varepsilon_0}{\lambda_*}.
\end{equation*}
The assumption that \(\mathcal{X}_1\) and \(\mathcal{X}_2\) intersect on $(\bar{x},\bar{t})$ gives 
\begin{equation*}
\bar{x}=x_2+\int_{t_2}^{\bar{t}}\dot{\mathcal{X}}_2(z)\,\dd z.
\end{equation*}
Since $\bar{x}>0$, $x_2<t_2(s_0+\varepsilon_0)$, and $\dot{\mathcal{X}}_2(z)\le -\lambda_*$, we conclude that $\bar{t}\leq \tilde{t}_2$.

By Lemma \ref{lem:wave-size}, we see that
\begin{equation}\label{eq:1-shock-not-interact-1}
\sigma_2(t)\le C\sigma_2(t_2).
\end{equation}
Using Lemmas \ref{lem:inverse-riemann-problem}, \ref{lem:strong-wave-interaction-estimate}, and \ref{lem:approximate-leading-shock}, we have 
\begin{equation}\label{eq:1-shock-not-interact-2}
\sigma_2(t)\le \frac{C \Lambda \Delta t}{1+t_1-\Delta t}.
\end{equation}

Now, applying the Gr\"onwall's inequality to \eqref{eq:no-interaction-1-shock-0} leads to 
\begin{align*}
\DD_1^*(\bar{t})\ge{}&\exp\big(-C\varepsilon_0\,\int_{t_2}^{\bar{t}}\mathfrak{B}^\star(z)\,\dd z\big)\,\DD_1^*(t_2)\\
&- \frac{1}{2}\big(1+O(1)\varepsilon_0\big)
\int_{t_2}^{\bar{t}}\exp\big(-C\varepsilon_{0}\int_{z}^{\bar{t}}\mathfrak{B}^\star(r)\dd r\big)\sigma_2(z)\,\dd z,
\end{align*}
where 
\begin{equation*}
\mathfrak{B}^\star(z)=\sum_{\substack{\alpha_2\in\mathcal{R}\\
	\text{crossing } \check{\ell}_z}} |\alpha_2|\,\frac{\mathds{1}_{(t_{\alpha_2}^\f,t_{\alpha_2}]}(z)}{t_{\alpha_2}-t_{\alpha_2}^\f}.
\end{equation*}
According to Lemma \ref{coro:total-wave-cross-C1-bounded}, we have
\begin{equation*}
\int_{t_2}^{\bar{t}}\mathfrak{B}^\star(z)\dd z\le C\,\varepsilon_{\rm g}.
\end{equation*}
Then we conclude
\begin{align*}
\DD_1^*(\bar{t})\ge{}&\ee^{-C\varepsilon_0\varepsilon_{\rm g}}\, \DD_1^*(t_2)
-\frac{1+O(1)\varepsilon_0}{2}\,\int_{t_2}^{\bar{t}}\sigma_2(z)\,\dd z\\
\ge{}&\ee^{-C\varepsilon_0\varepsilon_{\rm g}}\, \DD_1^*(t_2)
-\frac{1+O(1)\varepsilon_0}{2}\,(\tilde{t}_2-t_2)\,\frac{C \Lambda \Delta t}{1+t_1-\Delta t}\\
\ge{}&\ee^{-C\varepsilon_0\varepsilon_{\rm g}}\, \DD_1^*(t_2)  
- C \Lambda\Delta t,
\end{align*}
where we have applied \eqref{eq:1-shock-not-interact-1}--\eqref{eq:1-shock-not-interact-2} and the fact that $t_2-t_1=\Delta t$ and $t_1\ge \Delta t$.

As $t=t_2$, observe that
\begin{equation*}
x_1+\int_{t_1}^{t_2}s^{\mu,\Delta t}(z)\dd z=x_1+\int_{t_1}^{t_2^\n}\sigma_1(z)\dd z+\lambda^*(t_2-t_2^\n).
\end{equation*}
We obtain
\begin{equation*}
(t_2-t_1)(s_0-\varepsilon_0)\le -(t_2^\n-t_1)\lambda_*+\lambda^*(t_2-t_2^\n),
\end{equation*}
which leads to
\begin{equation*}
\DD_1^*(t_2)=(t_2-t_2^\n)\lambda^*\ge \Delta t\, \frac{\lambda^*(s_0-\varepsilon_0+\lambda_*)}{\lambda^*+\lambda_*}.
\end{equation*}

At this stage, as long as we take $\epsilon_{\rm inv}$ and $\Lambda$ sufficiently small, then $\DD_1^*(\bar{t})>0$. 
The proof is complete.
\end{proof}

Next, we show {\rm (iv)}. Suppose that $\mathcal{X}_1$ and $\mathcal{X}_2$ are 2-rarefaction (2-characteristic) fronts  
of strength $\sigma_1(t)$ and $\sigma_2(t)$, generating from $(x_1,t_1)$ and $(x_2,t_2)$ on the approximate leading shock, 
respectively; see Fig.~\ref{fig:rarefaction-wave-no-interaction}. Similarly, we may assume that 
\begin{equation*}
t_2=t_1+\Delta t.
\end{equation*} 
\begin{figure}[h]
\centering  
\includegraphics[width=0.90\textwidth]{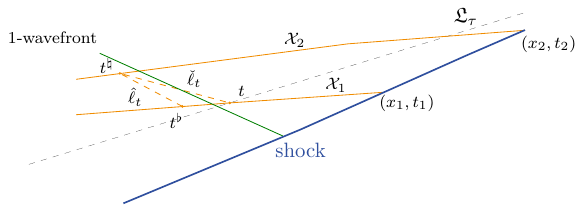}
\caption{Two 2-rarefaction fronts do not intersect}
\label{fig:rarefaction-wave-no-interaction}
\end{figure}

We have 
\begin{proposition}\label{prop:wave-front-separate-b}
As $\Lambda$ and $\epsilon_{\rm inv}$ are suitably small, $\mathcal{X}_1$ and $\mathcal{X}_2$ do not intersect on \(\LL_{\tau_{\rm c}}\).
\end{proposition}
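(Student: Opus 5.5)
I will mirror the proof of Proposition~\ref{prop:wave-front-separate}, with the roles of the two families exchanged. Now $\mathcal{X}_1$ and $\mathcal{X}_2$ are backward $2$-rarefaction (or $2$-characteristic) fronts. Their speeds are close to $\lambda_2(\overline{U})>\check\lambda$, so both travel from the leading shock toward the piston. In physical space they move leftwards as $\tau$ increases along the $\bm n_{\LL}$ direction.

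\textbf{Step 1: set-up and initial separation.} As before, no $2$-front issued from the shock lies between $\mathcal{X}_1$ and $\mathcal{X}_2$ before $\tau_{\rm c}$. I define a horizontal distance $\DD_2^*$ using auxiliary lines of slopes $\lambda^*$ and $\lambda_*$ from \eqref{eq:lambda-range}, exactly as in \eqref{eq:t-flat}. Differentiating gives, in analogy with \eqref{eq:derivative-D},
\[
\frac{\dd\DD_2^*}{\dd t}=c_*\bigl(\dot{\mathcal{X}}_1(t^\n)-\dot{\mathcal{X}}_2(t)\bigr)\bigl(1+O(1)\varepsilon_0\bigr),
\]
with a fixed positive constant $c_*$ and the sign oriented so that positivity means separation. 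The initial value $\DD_2^*(t_2)\ge c\,\Delta t$ follows from the geometry at generation. The shock speed $s^{\mu,\Delta t}\approx s_0$ is separated from the $2$-characteristic speed by \eqref{eq:separate}, so the two generation points are at distance of order $\Delta t$ measured transversally to the $2$-direction.

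\textbf{Step 2: the speed difference.} For $2$-rarefaction fronts, \eqref{eq-accurate-riemann-solver} gives $\dot{\mathcal{X}}_k=\lambda_2$ of the state on one fixed side, up to $O(\sigma_k)$. Hence the difference reduces to $O(\sigma_1+\sigma_2)$ plus the jump of $\lambda_2$ across the segment joining the two fronts. That jump is a sum over fronts crossing the auxiliary line. Since no $2$-front lies between them, only $1$-fronts contribute. The $1$-shocks contribute with a favourable sign, and the $1$-rarefactions contribute $-\sum|\alpha_1|$. This rarefaction sum is converted, as in \eqref{eq:derivative-D-1}, into a term $C\varepsilon_0\mathfrak{B}^\star(t)\DD_2^*(t)$. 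Its time integral is bounded by $C\varepsilon_{\rm g}$ via Lemma~\ref{coro:total-wave-cross-C2-bounded}.

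\textbf{Step 3: the rarefaction strengths (main obstacle).} In case (i) the shock strength entered with a bad sign but was small. Here the rarefaction strengths $\sigma_k$ enter with a sign that could make the fronts converge. The genuine new term is the generation strength at $t_2$, and its sign depends on $s_{h+1}-s_h$. By Lemma~\ref{lem:strong-wave-interaction-estimate}, a $2$-rarefaction ($\beta_2>0$) is produced exactly when $s$ decreases. By Lemma~\ref{lem:approximate-leading-shock}(ii) together with Lemmas~\ref{lem:inverse-riemann-problem} and~\ref{lem:strong-wave-interaction-estimate}, this gives
\[
\sigma_2(t_2)\le \frac{C\Lambda\Delta t}{1+t_1-\Delta t}.
\]
Lemma~\ref{lem:wave-size} then propagates this bound, giving $\sigma_k(t)\le C\sigma_k(t_k)$ for all later $t$.

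\textbf{Step 4: Grönwall.} Applying Grönwall's inequality yields
\[
\DD_2^*(\bar t)\ge \ee^{-C\varepsilon_0\varepsilon_{\rm g}}\DD_2^*(t_2)-C\int_{t_2}^{\bar t}\sigma(z)\,\dd z .
\]
I expect the main difficulty here. Unlike the forward $1$-shock, a backward $2$-front reaches $x=0$ only after a time comparable to $t_2$, so $\bar t-t_2\le C(1+t_2)$. The weighted bound $\Lambda\Delta t/(1+t_1)$ is designed exactly to compensate this: the integral is at most $C\Lambda\Delta t$. The first term is at least $c\,\Delta t$. Choosing $\Lambda$ and $\epsilon_{\rm inv}$ small therefore gives $\DD_2^*(\bar t)>0$, which contradicts the assumed intersection on $\LL_{\tau_{\rm c}}$.
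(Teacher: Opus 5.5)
Your architecture matches the paper's: a horizontal distance, the $\lambda_2$-jump between the fronts split into 1-shock and 1-rarefaction parts, a Gr\"onwall weight $C\varepsilon_0\mathfrak{B}^\star\DD_2^*$ controlled by Lemma~\ref{coro:total-wave-cross-C2-bounded}, the Ole{\u\i}nik bound $C\Lambda\Delta t/(1+t_1-\Delta t)$ on the 2-rarefaction strength, and an interval of length $O(t_1)$. However, you mirrored case (i) in the family index without reversing the time direction, and as a result Step 2 swaps the good and bad 1-waves.

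\textbf{Time orientation.} A 2-front issued from the shock at $(x_k,t_k)$ exists only for $t\le t_k$: it has speed $\lambda_2>0$ and reaches $x=0$ in the past. So the two fronts must be compared on $[\bar t,t_1]$ with $\bar t<t_1$, starting from the separation at $t_1$ (the front born at $t_1$ does not exist at $t_2$). The auxiliary lines need speeds $-\lambda^*,-\lambda_*$, transversal to the 2-fronts. Lines of speed $+\lambda^*$ as in \eqref{eq:t-flat} are nearly parallel to them, and the factor $\frac12(1+O(1)\varepsilon_0)$ degenerates to $O(1/\varepsilon_0)$.

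\textbf{Sign of the 1-waves.} Since you integrate backward from $t_1$, you need an \emph{upper} bound on $\dot{\mathcal X}_1-\dot{\mathcal X}_2$, where $\mathcal X_1$ is the right-hand front. Because $\lambda_2=-\lambda_1$, Lax's condition shows that $\lambda_2$ jumps \emph{up} from left to right across a 1-shock and down across a 1-rarefaction. So a 1-shock between the fronts makes the right front faster, and the fronts converge in the past. The 1-shocks are therefore the destabilizing terms; the 1-rarefactions have the favourable sign and may be dropped. Your Step 2 asserts the opposite. The inequality you feed into Gr\"onwall thus omits exactly the term that drives the fronts together. Your signs would be right only for forward-in-time tracking, which is the geometry of case (i), not case (iv).

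\textbf{The repair.} Bound $\frac{\dd}{\dd t}\DD_2^*\le\frac12\big(\sigma_1+\sum_{\mathcal S}|\varpi_1|\big)(1+O(1)\varepsilon_0)$. Compare the 1-shocks crossing $\check\ell_t$ with those crossing $\mathcal X_1$ on $[t^\f,t)$ using Lemma~\ref{lem:wave-size}, and attach the weight $\mathfrak{B}^\star$ to the shock sum. Lemma~\ref{coro:total-wave-cross-C2-bounded} controls all 1-fronts crossing a 2-characteristic, shocks included, so $\int\mathfrak{B}^\star\le C\varepsilon_{\rm g}$ still holds. Your Steps 3--4 then go through with $t_1-\bar t\le t_1(s_0+\varepsilon_0)/\lambda_*$ and $\DD_2^*(t_1)\ge c\,\Delta t$, the latter using $\lambda_*>s_0+\varepsilon_0$.

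A minor point: your contrast with case (i) is inaccurate. There too the 1-shock strength is small only through the same weighted bound \eqref{eq:1-shock-not-interact-2}, integrated over an interval $\tilde t_2-t_2$ comparable to $t_2$. Like 2-rarefactions, 1-shocks are created exactly when $s$ decreases.
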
 
\begin{proof} 
We know that there is no 2-wavefront emanating from the leading shock between $\mathcal{X}_1$ and $\mathcal{X}_2$; 
otherwise, an intersection would occur on $\LL_{\tilde{\tau}}$ for some $\tilde{\tau}<\tau_{\rm c}$.

Define $\check{\ell}_t$ as the line passing through $\mathcal{X}_1(t)$ with speed $-\lambda^*$ (see \eqref{eq:lambda-range}), 
which meets $\mathcal{X}_2$ at $(\mathcal{X}_2(t^\n),t^\n)$. Similarly, let $\hat{\ell}_t$ be the line passing 
through $\mathcal{X}_2(t^\n)$ with speed $-\lambda_*$, which meets $\mathcal{X}_1$ at $(\mathcal{X}_1(t^\f),t^\f)$.

With a slight abuse of notation, we regard $\mathcal{X}_1$ and $\mathcal{X}_2$ 
as maps: $\mathbb{R}_+\to \mathbb{R}$ (mapping $t$ to the $x$-coordinate of position $\mathcal{X}_i$). 
As in \cite{glimm1970,glass2007}, define the \textit{horizontal} distance between $\mathcal{X}_1(t)$ and $\mathcal{X}_2(t)$ as 
\begin{equation*}
\DD_2^*(t)=\mathcal{X}_1(t)-\mathcal{X}_2(t^\n).
\end{equation*}
Taking the derivative with respect to $t$, we obtain 
\begin{equation*}
\frac{\dd}{\dd t}\DD_2^*(t)=\dot{\mathcal{X}}_1(t)-\dot{\mathcal{X}}_2(t^\n)\frac{\dd t^\n}{\dd t}.
\end{equation*}
Note that 
\begin{equation}\label{eq:t-flat-b}
\mathcal{X}_1(t)-\mathcal{X}_2(t^\n)=\lambda^*(t^\n-t),\quad
\mathcal{X}_1(t^\f)-\mathcal{X}_2(t^\n)=\lambda_*(t^\n-t^\f),
\end{equation}
which implies
\begin{equation*}
\frac{\dd t^\n}{\dd t}=\frac{\dot{\mathcal{X}}_1(t)+\lambda^*}{\dot{\mathcal{X}}_2(t^\n)+\lambda^*},\quad
\frac{\dd t^\f}{\dd t^\n}=\frac{\dot{\mathcal{X}}_2(t^\n)+\lambda_*}{\dot{\mathcal{X}}_1(t^\f)+\lambda_*}.
\end{equation*}
Therefore, we have
\begin{equation*}
\frac{\dd t^\f}{\dd t}=\frac{\dot{\mathcal{X}}_1(t)+\lambda^*}{\dot{\mathcal{X}}_2(t^\n)+\lambda^*}\, 
\frac{\dot{\mathcal{X}}_2(t^\n)+\lambda_*}{\dot{\mathcal{X}}_1(t^\f)+\lambda_*}>0,
\end{equation*}
and
\begin{equation}\label{eq:derivative-D-b}
\begin{aligned}
	\frac{\dd\DD_2^*}{\dd t}
	&= \bigl(\dot{\mathcal{X}}_1(t)-\dot{\mathcal{X}}_2(t^\n)\bigr)\, \frac{\lambda^*}{\dot{\mathcal{X}}_2(t^\n)+\lambda^*} \\
	&= \frac{1}{2}\bigl(\dot{\mathcal{X}}_1(t)-\dot{\mathcal{X}}_2(t^\n)\bigr)\, \bigl(1+O(1)\varepsilon_0\bigr).
\end{aligned}  
\end{equation}
As both $\mathcal{X}_1$ and $\mathcal{X}_2$ are rarefaction fronts, from \eqref{eq-wave-curve} 
and the construction in \eqref{eq:self-similar-riemann-solution}, we have 
\begin{align*}
\dot{\mathcal{X}}_1(t)-\dot{\mathcal{X}}_2(t^\n)
={}& \bigl(\dot{\mathcal{X}}_1(t)-\lambda_2(U^{\mu,\Delta t}(\mathcal{X}_1(t)-,t))\bigr) \\
&+\lambda_2(U^{\mu,\Delta t}(\mathcal{X}_1(t)-,t))-\lambda_2(U^{\mu,\Delta t}(\mathcal{X}_2(t^\n)+,t^\n))\\
={}& \sigma_1(t)+\lambda_2(U^{\mu,\Delta t}(\mathcal{X}_1(t)-,t))-\lambda_2(U^{\mu,\Delta t}(\mathcal{X}_2(t^\n)+,t^\n)).
\end{align*}

Denote by $(x_\alpha,t_\alpha)$ the position where a wavefront $\alpha$ crosses
line $\check{\ell}_t$ (see Fig.~\ref{fig:rarefaction-wave-no-interaction}). Then 
\begin{align*}
&\lambda_2(U^{\mu,\Delta t}(\mathcal{X}_1(t)-,t))-\lambda_2(U^{\mu,\Delta t}(\mathcal{X}_2(t^\n)+,t^\n))\\
={}& \sum_{\substack{\text{wavefronts } \alpha\\ \text{crossing } \check{\ell}_t}} 
\bigl(\lambda_2(U^{\mu,\Delta t}(x_\alpha+,t_\alpha))-\lambda_2(U^{\mu,\Delta t}(x_\alpha-,t_\alpha))\bigr) \\
={}& \sum_{\substack{\varpi_1\in\mathcal{S}\\ \text{crossing } \check{\ell}_t}} |\varpi_1|
-\sum_{\substack{\alpha_1\in\mathcal{R}\\ \text{crossing } \check{\ell}_t}} |\alpha_1| \\[4pt]
\leq{}& \sum_{\substack{\varpi_1\in\mathcal{S}\\ \text{crossing } \check{\ell}_t}} |\varpi_1|.
\end{align*}
Thus, we obtain 
\begin{equation*}
\frac{\dd\DD_2^*}{\dd t}
\leq\frac{1}{2}\bigg(\sigma_1(t)+\sum_{ \substack{\varpi_1\in\mathcal{S}\\
		\text{crossing } \check{\ell}_t }}|\varpi_1| \bigg) (1+O(1)\varepsilon_0).
\end{equation*}
		
		Since no 1-rarefaction front of positive strength exists between $\mathcal{X}_1$ and $\mathcal{X}_2$, 
		Lemma \ref{lem:wave-size} implies 
\begin{equation*}
\sum_{\substack{\varpi_1\in\mathcal{S}\\ \text{crossing } \check{\ell}_t}} |\varpi_1|
\leq C \sum_{\substack{\varpi_1\in\mathcal{S}\\ \text{crossing } \{(\mathcal{X}_1(r),r)\,:\,\,t^\f \le r < t\} }} |\varpi_1|.
\end{equation*}

Let $\varpi$ denote the 1-shock fronts that cross $\{x=\mathcal{X}_1(z),\,z\le t_1\}$ at time $t_{\varpi}$ (alternatively). 
Moreover, let $t_{\varpi}^\s$ be the time such that $(t_{\varpi}^\s)^\f = t_{\varpi}$ for $\varpi\in\mathcal{S}$. Then
\begin{align}\label{eq:derivative-D-1-b}
\sum_{t^\f \le t_{\varpi_1} < t} |\varpi_1|
= \sum_{\substack{\varpi_1\in\mathcal{S}\\ \text{crossing } \check{\ell}_t}} |\varpi_1| \, \mathds{1}_{[t_{\varpi_1}, t_{\varpi_1}^\s)}(t).
\end{align}

From \eqref{eq:t-flat-b}, we deduce 
\begin{equation*}
\frac{\lambda^*-\lambda_*}{\lambda^*+\lambda_*}\,(t^\n-t) \le t-t^\f \le \frac{\lambda^*-\lambda_*}{2\lambda_*}\,(t^\n-t).
\end{equation*}
Consequently, $t_\varpi - t_\varpi^\f = O(\varepsilon_0)\,\DD_2^*(t_{\varpi})$. Recalling \eqref{eq:derivative-D-b}, we obtain 
\begin{equation*}
\DD_2^*(z) = \DD_2^*(t_{\varpi}^\s) + O(\varepsilon_0)(z - t_{\varpi}^\s) = \bigl(1+O(\varepsilon_0^2)\bigr)\,\DD_2^*(t_{\varpi}^\s)
\qquad\mbox{for $z \in (t_{\varpi}, t_{\varpi}^\s]$}.
\end{equation*}

Now, \eqref{eq:derivative-D-1-b} becomes 
\begin{equation*}
\sum_{t^\f\le t_{\varpi_1}<t} |\varpi_1|
= \sum_{\substack{\varpi_1\in\mathcal{S}\\ \text{crossing } \check{\ell}_t}} 
|\varpi_1| \, \frac{\mathds{1}_{[t_{\varpi_1}, t_{\varpi_1}^\s)}(t)}{t_{\varpi_1}^\s - t_{\varpi_1}} \, O(\varepsilon_0)\,\DD_2^*(t),
\end{equation*}
which yields
\begin{equation}\label{eq:no-interaction-1-shock-0-b}
\frac{\dd\DD_2^*(t)}{\dd t}
\le C\sigma_1(t)
+ C\varepsilon_0 \sum_{\substack{\varpi_1\in\mathcal{S}\\ \text{crossing } \check{\ell}_t}} 
|\varpi_1| \, \frac{\mathds{1}_{[t_{\varpi_1}, t_{\varpi_1}^\s)}(t)}{t_{\varpi_1}^\s - t_{\varpi_1}} \, \DD_2^*(t).
\end{equation}

Let 
\begin{equation*}
\tilde{t}_1 = t_1 \, \frac{\lambda_* - s_0 - \varepsilon_0}{\lambda_*}.
\end{equation*}
The assumption that $\mathcal{X}_1$ and $\mathcal{X}_2$ intersect at $(\bar{x},\bar{t})$ gives 
\begin{equation*}
\bar{x} = x_1 + \int_{t_1}^{\bar{t}} \dot{\mathcal{X}}_1(z) \, \dd z.
\end{equation*}
Since $\bar{x} > 0$, $x_1 < t_1(s_0 + \varepsilon_0)$, and $\dot{\mathcal{X}}_1(z) \le \lambda^*$, 
then $\bar{t} \ge \tilde{t}_1$.

By Lemma \ref{lem:wave-size}, we obtain
\begin{equation}\label{eq:1-shock-not-interact-1-b}
\sigma_1(t) \le C \sigma_1(t_1).
\end{equation}
Using Lemmas \ref{lem:inverse-riemann-problem}, \ref{lem:strong-wave-interaction-estimate}, and \ref{lem:approximate-leading-shock}, we have 
\begin{equation}\label{eq:1-shock-not-interact-2-b}
\sigma_1(t)\le \frac{C \Lambda \Delta t}{1+t_1-\Delta t}.
\end{equation}
Now, applying Gr\"onwall's inequality to \eqref{eq:no-interaction-1-shock-0-b} yields 
\begin{align*}
\DD_2^*(\bar{t}) \ge{}& \exp(-C\varepsilon_0\int_{\bar{t}}^{t_1}\mathfrak{B}^\star(z)\,\dd z)\,\DD_2^*(t_1) \\
&- C \int_{\bar{t}}^{t_1}\sigma_1(z)\exp(-C\varepsilon_{0}\int_{\bar{t}}^{z}\mathfrak{B}^\star(r)\,\dd r)\,\dd z,
\end{align*}
where 
\begin{equation*}
\mathfrak{B}^\star(z)=\sum_{\substack{\varpi_1\in\mathcal{S}\\ \text{crossing } \check{\ell}_z}} 
|\varpi_1|\,\frac{\mathds{1}_{[t_{\varpi_1},t_{\varpi_1}^\s)}(z)}{t_{\varpi_1}^\s-t_{\varpi_1}}.
\end{equation*}
Note that 
\begin{equation*}
\int_{\bar{t}}^{t_2}\mathfrak{B}^\star(z)\,\dd z \le C\varepsilon_{\rm g},
\end{equation*}
by Corollary \ref{coro:total-wave-cross-C2-bounded}. We conclude
\begin{align*}
\DD_2^*(\bar{t}) &\ge \ee^{-C\varepsilon_0\varepsilon_{\rm g}} \DD_2^*(t_1)
- C\int_{\bar{t}}^{t_1}\sigma_1(z)\,\dd z \\[4pt]
&\ge \ee^{-C\varepsilon_0\varepsilon_{\rm g}} \DD_2^*(t_1)
- C(t_1-\bar{t})\,\frac{C \Lambda \Delta t}{1+t_1-\Delta t} \\[4pt]
&\ge \ee^{-C\varepsilon_0\varepsilon_{\rm g}}\DD_2^*(t_1)
- C \Lambda \Delta t,
\end{align*}
where we have applied \eqref{eq:1-shock-not-interact-1-b}--\eqref{eq:1-shock-not-interact-2-b}.

At $t=t_1$, observe that
\begin{equation*}
x_1 + \int_{t_1}^{t_2} s^{\mu,\Delta t}(z)\,\dd z 
= x_1 - \lambda^*(t_1^\n - t_1) + \int_{t_1^\n}^{t_2} \sigma_2(z)\,\dd z,
\end{equation*}
from which we obtain
\begin{equation*}
(t_2 - t_1)(s_0 + \varepsilon_0) \ge (t_2 - t_1^\n)\lambda_* - \lambda^*(t_1^\n - t_1),
\end{equation*}
which leads to
\begin{equation*}
\DD_2^*(t_1) = (t_1^\n - t_1)\lambda^* \ge \Delta t \,\frac{\lambda^*(\lambda_* - s_0 - \varepsilon_0)}{\lambda_* + \lambda^*}.
\end{equation*}

Thus, $\DD_2^*(\bar{t}) > 0$, provided that $\varepsilon_{\rm inv}$ and $\Lambda$ are chosen sufficiently small.  
This completes the proof.
\end{proof}

As a result, $U^{\mu,\Delta t}$ can be extended to $\Omega^{\mu,\Delta t,h+1}$ 
satisfying hypotheses $\mathrm{H}_1(h+1)$--$\mathrm{H}_3(h+1)$.

\section{Proof of the Main Theorem}\label{sect-proof-main-theorem}

Since no new $2$-rarefaction waves are generated through interactions between $1$-shock waves, 
and since any two $2$-rarefaction waves remain non-interacting, Lemma \ref{lem:wave-size} yields the following lemma:
\begin{lemma}\label{coro:rarefaction-front-small}
There exists a constant  $C_{\rm ra}>0$, independent of $\delta$, such that 
every rarefaction front in the approximate solution satisfies
$$
|\alpha|<C_{\rm ra}\, \delta.
$$
\end{lemma}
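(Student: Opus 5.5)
The plan is to classify every rarefaction front of $U^{\mu,\Delta t}$ by its origin and to bound its strength at creation. Lemma~\ref{lem:wave-size} then controls how much the strength can grow along the front. There are three possible origins.

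\emph{Fronts created at the leading shock} (Case~1 of the construction). There are two sources here. First, a $1$-rarefaction arising from a jump $s_{h+1}^{\mu,\Delta t}>s_h^{\mu,\Delta t}$ is split by the accurate Riemann solver \eqref{eq-accurate-riemann-solver} into fronts of strength at most $\delta$. Second, a backward $2$-rarefaction $\beta_2>0$ appears only when $s_{h+1}^{\mu,\Delta t}<s_h^{\mu,\Delta t}$, by Lemma~\ref{lem:strong-wave-interaction-estimate}(ii). For this one, Lemmas~\ref{lem:inverse-riemann-problem}, \ref{lem:strong-wave-interaction-estimate} and \ref{lem:approximate-leading-shock}(ii) give
\[
|\beta_2|\le C|s_{h+1}^{\mu,\Delta t}-s_h^{\mu,\Delta t}|\le \frac{C\Lambda\Delta t}{1+(h-1)\Delta t}\le C\Lambda\Delta t .
\]
For $h>N_{\Delta t}$ the shock speed is constant and no waves are created. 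I would choose $\delta(\Delta t)\ge \Lambda\Delta t$, for instance $\delta=\Delta t$. This is consistent with the statement that $\delta$ exceeds every rarefaction strength, and it makes the strength of every shock-generated rarefaction front at most $C\delta$.

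\emph{Fronts created in the interior.} By Remark~\ref{rem-no-perturbation} and Propositions~\ref{prop:wave-front-separate}--\ref{prop:wave-front-separate-b}, the only interior interactions are Case~2 collisions between fronts of different families. By Lemma~\ref{lem:wave-interaction-estimate-1}, such a collision produces outgoing waves $\beta_i=\alpha_i+O(1)|\alpha_1||\alpha_2|$, with no new rarefaction in a family where the incoming wave was absent or was a shock of order larger than the quadratic error. I would treat each outgoing $i$-wave as the continuation of the incoming $i$-front. The sign is the key point. A $1$-shock entering a collision exits as a $1$-shock, up to an error of size $O(1)|\alpha_1||\alpha_2|$. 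A genuinely new $2$-rarefaction can only be produced by a nonzero quadratic term, which appears only when a $2$-front is already present. As the paragraph preceding the lemma asserts, $1$-shocks never generate new $2$-rarefactions. The same reasoning covers $1$-rarefactions arising from $2$-shocks: in that case Lemma~\ref{lem:wave-interaction-estimate-1} forces $|\beta_1|\le C|\alpha_1||\alpha_2|\le C\varepsilon_{\rm g}|\alpha_1|$. Hence every rarefaction front is the continuation of a front born either at the leading shock or in an accurate Riemann solver, and its initial strength is at most $C\delta$. Fronts at the boundary are absorbed (Case~3), so they create nothing.

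\emph{Growth along a front.} Hypothesis $\mathrm{H}_2$, established in \S\ref{sect-extension}, says that fronts of the same family never intersect, so Lemma~\ref{lem:wave-size}(i) applies to each front $\alpha$ from its birth line $\LL_{\tau_0}$. It gives
\[
|\alpha(\tau)|\le |\alpha(\tau_0)|\,\exp\big(M(L_\alpha+K_1Q+K_2P)(\tau_0)\big)\le |\alpha(\tau_0)|\,\ee^{M\varepsilon_{\rm g}},
\]
using Corollary~\ref{coro:estimate-glimm-functional} to bound the functional by $\varepsilon_{\rm g}$. Taking $C_{\rm ra}=2C\ee^{M\varepsilon_{\rm g}}$ gives the claim with a constant independent of $\delta$.

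I expect the main obstacle to be the interior step: making rigorous that Case~2 collisions never spawn an \emph{independent} rarefaction front whose strength is not tied to an earlier front. The point to prove is that the outgoing same-family wave is genuinely a continuation, so that Lemma~\ref{lem:wave-size} applies to it. I would check this by a sign analysis of $\beta_i-\alpha_i$ for shock-rarefaction pairs, using Lemma~\ref{lem:wave-interaction-estimate-1}.
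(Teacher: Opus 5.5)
Your proposal is correct and follows the same route as the paper. Rarefaction fronts are born only at the kinks of the approximate leading shock: forward $1$-rarefactions are split into fronts of strength $<\delta$, and backward $2$-rarefactions have strength $O(\Lambda\Delta t)$ by Lemma~\ref{lem:approximate-leading-shock}(ii), given the coupling $\delta\ge\Lambda\Delta t$ that you make explicit and the paper leaves implicit. No interior collision creates a new rarefaction, and Lemma~\ref{lem:wave-size} bounds the growth along each front by a factor $\ee^{M\varepsilon_{\rm g}}$.

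The interior step you flag as the main obstacle closes in one line. Lemma~\ref{lem:wave-interaction-estimate-1} gives $|\beta_i-\alpha_i|\le C|\alpha_i||\alpha_j|<|\alpha_i|$, so each outgoing front keeps the type of the incoming front of its family. Your bound $|\beta_1|\le C\varepsilon_{\rm g}|\alpha_1|$ is not the right tool there, because a $1$-shock strength need not be $O(\delta)$.
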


Moreover, we deduce a bound for the approximate piston speeds.
\begin{lemma}\label{lem:boundary-tv-estimates}
There exists $C_b>0$ depending only on $\overline{U}$ and system \eqref{eq:p-system}
such that
\begin{equation*}
TV\{u_{\rm p}^{\mu,\Delta t}:\,[0,\tau_{(h+1)\Delta t})\}<C_b F(0).
\end{equation*}
\end{lemma}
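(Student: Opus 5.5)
The plan is to exploit the non-reflection rule in \emph{Case 3} of the construction. The approximate piston speed $u_{\rm p}^{\mu,\Delta t}$ is piecewise constant. On the first interval $[0,\tau_{\Delta t})$ it equals $G^{(2)}(s_1^{\mu,\Delta t};U_0)$. After that it changes only at the times when a weak wavefront hits the boundary $\{x=0\}$. So the total variation on $[0,\tau_{(h+1)\Delta t})$ is the sum of the jumps $|U_a^{(2)}-U_b^{(2)}|$ over all fronts $\alpha$ that reach the boundary before $\tau_{(h+1)\Delta t}$. Each jump is bounded via Lemma \ref{lem:bv-estimate}, and the sum is then controlled by the decrease of the Glimm-type functional.

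\textbf{Step 1: a single boundary hit.} When a single front $\alpha$ reaches the boundary, it separates states $U_b,U_a\in O_{\varepsilon_0}(\overline{U})$; this membership follows from $\mathrm{H}_1$ and Corollary \ref{coro:estimate-glimm-functional}. Lemma \ref{lem:bv-estimate} then gives
\begin{equation*}
|u_{\rm p}^{\mu,\Delta t}(\bar t+)-u_{\rm p}^{\mu,\Delta t}(\bar t-)|\le |U_a-U_b|\le C_1|\alpha|.
\end{equation*}
Here the relevant wave is a backward $2$-front, since $1$-fronts move away from $x=0$. Its parameters are $(\alpha_1,\alpha_2)=(0,\alpha)$. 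If several fronts reach the boundary at the same point, I will treat them one by one, which is consistent with the ordering along $\LL_\tau$.

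\textbf{Step 2: summing via the functional.} Case 3 of the proof of Proposition \ref{prop:glimm-functional-decreasing} shows that each boundary hit gives $F(\tau+)-F(\tau-)\le-|\alpha|$. By the same proposition, every other regular interaction point gives $F(\tau+)\le F(\tau-)$. Section \ref{sect-extension} shows that irregular interaction points do not occur. The functional $F\ge0$ is piecewise constant in $\tau$ with finitely many jumps on compact intervals, by Corollary \ref{coro:estimate-glimm-functional} and $\mathrm{H}_3$. Telescoping therefore gives
\begin{equation*}
\sum_{\alpha\ \text{hits}\ \Gamma,\ \tau_\alpha<\tau_{(h+1)\Delta t}}|\alpha|\le F(0)-F(\tau_{(h+1)\Delta t}-)\le F(0).
\end{equation*}
Multiplying by $C_1$ bounds the total variation coming from boundary hits by $C_1F(0)$.

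\textbf{Step 3: the junction at $\tau_{\Delta t}$ and the conclusion.} The remaining issue is bookkeeping of the initial segment and of the jumps at the junction times $\tau_{k\Delta t}$. At these times the piston speed changes only if a generated wave actually reaches $x=0$, and that contribution has already been counted in Step 2. One point needs care: $F(0)$ must include $K_2P(0)$, which is the contribution of the waves generated at the corners of the leading shock. This matters because $L(0)$ is essentially zero (the state is uniform near the origin), so it is the $P$ term that pays for waves created later. Under this reading, $C_b=C_1$, or $C_b=2C_1$ for strict inequality, works.

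The step I expect to be the main obstacle is justifying the telescoping in Step 2. I need $F(\tau-)\le\varepsilon_{\rm g}$ at every interaction, which I will obtain inductively from $F(0)<\varepsilon_{\rm g}$, guaranteed by choosing $\epsilon_{\rm inv}$ small. I also need that no other mechanism changes $u_{\rm p}$, which holds by construction since only Case 3 updates the piston speed.
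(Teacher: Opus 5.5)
Your proposal is correct and follows the paper's argument. Each boundary hit changes $u_{\rm p}^{\mu,\Delta t}$ by $O(1)|\alpha|$, Case 3 of Proposition \ref{prop:glimm-functional-decreasing} makes $F$ drop by at least $|\alpha|$ there, and telescoping against the monotonicity at all other regular interaction points bounds the total by $F(0)$. One small correction: $1$-fronts have speed $\lambda_1<0$, so they also travel toward $x=0$ and hit the piston, but since Lemma \ref{lem:bv-estimate} applies to a front of either family, your bound $|U_a-U_b|\le C_1|\alpha|$ is unaffected.
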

\begin{proof}
It follows from the proof of Proposition \ref{prop:glimm-functional-decreasing} that 
\begin{align*}
TV\{u_{\rm p}^{\mu,\Delta t}:\,[0,\tau_{(h+1)\Delta t})\}={}&O(1) \sum_{\alpha\text{ hits the boundary}} |\alpha|\\
\le{}&O(1)\sum_{\tau:\,\alpha \text{ hits the boundary at time }\tau} F(\tau-)-F(\tau+)\\
\le{}& C_b F(0),
\end{align*}
which completes the proof.
\end{proof}

Following the arguments in \cite{Bressan2000}, we can obtain the following proposition:
\begin{proposition}\label{prop:compactness}
Passing to suitable subsequences, we obtain the following convergences{\rm :}
\begin{enumerate}
\item[\rm (i)] the approximate solutions ${U^{\mu,\Delta t}}$ converging to a limit
\begin{equation*}
U(\cdot, \tau)\in L_{\rm loc}^1(\LL_\tau;\,\mathbb{R}^2);
\end{equation*}
\item[\rm (ii)] the approximate speed functions ${u_{\rm p}^{\mu,\Delta t}}$ converging to $u_{\rm p}\in BV(\mathbb{R}_+)$.
\end{enumerate}
\end{proposition}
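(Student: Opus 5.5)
The plan follows the standard Helly-type compactness argument of \cite[\S 7.4]{Bressan2000}, adapted to the slanted marching lines $\LL_\tau$ and the $\bm n_{\LL}$-direction. First we record uniform bounds that are independent of $\mu$ and $\Delta t$. By Corollary~\ref{coro:estimate-glimm-functional} and Propositions~\ref{prop:wave-front-separate}--\ref{prop:wave-front-separate-b}, irregular interaction points do not occur. Hence $F(\tau+)\le F(0)\le C\|s-s_0\|_{L^\infty\cap BV}$, where $F(0)$ is bounded by $K_2 TV(s^{\mu,\Delta t})\le K_2TV(s)$ via Lemma~\ref{lem:approximate-leading-shock}(i), and the waves are created only at the shock. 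Consequently, for every $\tau$, the restriction $U^{\mu,\Delta t}|_{\LL_\tau}$ takes values in $O_{\varepsilon_0}(\overline U)$. Its total variation along $\LL_\tau$ is at most $C_1 L(\tau)\le C_1F(0)$, by Lemma~\ref{lem:bv-estimate}. We parametrize each $\LL_\tau$ by $t\ge\tau$, or equivalently by $x\in[0,\chi^{\mu,\Delta t}(t)]$, and extend by $U_0$ beyond the shock. The shock position moves Lipschitz-continuously in $\tau$ with constant depending only on $\|s\|_{L^\infty}$, so the total variation of the extended function remains uniformly bounded as well.

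Second, we prove Lipschitz continuity in the marching variable $\tau$ in $L^1_{\rm loc}$. We identify $\LL_\tau$ and $\LL_{\tau'}$ by translation along the $t$-axis, so that the point $(x,\lambda^\n(\cdot))$ is matched with the same $x$. Every front crosses the lines $\LL_\tau$ transversally, since by \eqref{eq:separate} the front speeds satisfy $\lambda_1<0<\lambda^\n<\check\lambda<\lambda_2$ and $s<\lambda^\n$. Therefore, on a bounded portion, a front displaces by at most $C|\tau-\tau'|$ in the parameter when passing from $\LL_\tau$ to $\LL_{\tau'}$. This gives
\[
\int_{\{0<x<R\}}|U^{\mu,\Delta t}(\cdot,\tau)-U^{\mu,\Delta t}(\cdot,\tau')|\le C\,F(0)\,|\tau-\tau'|+C|\tau-\tau'|,
\]
where the second term accounts for the moving shock endpoint. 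Helly's theorem and a diagonal argument over rational $\tau$ then produce a subsequence along which $U^{\mu,\Delta t}(\cdot,\tau)\to U(\cdot,\tau)$ in $L^1_{\rm loc}(\LL_\tau)$ for all $\tau$ simultaneously, which is (i). For (ii), Lemma~\ref{lem:boundary-tv-estimates} gives the bound $TV(u_{\rm p}^{\mu,\Delta t})\le C_bF(0)$, uniformly for $\tau<\tau_{(h+1)\Delta t}$ and hence on all of $\mathbb R_+$. Hypothesis $\mathrm{H}_1$ gives a uniform $L^\infty$ bound. Helly's selection theorem, applied to a further subsequence, therefore yields $u_{\rm p}^{\mu,\Delta t}\to u_{\rm p}$ pointwise and in $L^1_{\rm loc}$, with $u_{\rm p}\in BV$ by lower semicontinuity of total variation.

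The main obstacle is the $\tau$-continuity estimate. The domain between $\Gamma$ and the approximate shock changes with $\mu,\Delta t$. Moreover, the waves absorbed at the boundary and those generated at the shock enter or leave the segment of $\LL_\tau$ under consideration. I would handle the first issue by using the uniform $L^1$ convergence of $\chi^{\mu,\Delta t}\to\chi$ on compact sets, which follows from Lemma~\ref{lem:approximate-leading-shock}(iii). I would handle the second by noting that the relevant segment of $\LL_\tau$ is compact and transversal to all fronts, so that only the uniform speed bounds $\lambda_*,\lambda^*$ from \eqref{eq:lambda-range} are needed. Finally, combining (i) for all $\tau$ with dominated convergence gives convergence in $L^1_{\rm loc}(\mathbb R_+\times\mathbb R_+)$, which is the form used later to pass to the limit in the weak formulation.
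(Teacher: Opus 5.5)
Your proposal is correct and takes the same route as the paper, which gives no details and simply defers to the standard Helly-type compactness argument of \cite[\S 7.4]{Bressan2000}. You spell that argument out: uniform BV bounds along each $\LL_\tau$ from the Glimm-type functional (with $F(0)=K_2P(0)\le K_2\,TV(s)$), uniform $L^1_{\rm loc}$-Lipschitz dependence on $\tau$ from the transversality of all fronts and of the leading shock to $\LL_\tau$, Helly's theorem with a diagonal argument for $U^{\mu,\Delta t}$, and Helly's theorem for $u_{\rm p}^{\mu,\Delta t}$ via Lemma~\ref{lem:boundary-tv-estimates} and $\mathrm{H}_1$.
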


Now, we are ready to prove the main theorem.
\begin{proof}[Proof of the Main Theorem]
The proof can be achieved by carrying out similar arguments as in \cite[\S 7.4]{Bressan2000}, 
together with Lemma \ref{coro:rarefaction-front-small}, \ref{lem:boundary-tv-estimates}, and Proposition \ref{prop:compactness}; 
thus we omit the details here.
\end{proof}

\appendix

\section{A lemma for interaction estimates}
The following lemma is useful in wave interaction estimates; see also \cite[Lemma 2.5]{Bressan2000}.
\begin{lemma}\label{lem:quadratic}
Suppose that $\Psi\colon\,\mathbb{R}^m\times\mathbb{R}^n\to\mathbb{R}^d$ is twice differentiable whose second derivatives are Lipschitz-continuous. Then
\begin{equation*}
\Psi(\bm x,\bm y)=\Psi(\bm 0,\bm y)+\Psi(\bm x,\bm 0)-\Psi(\bm 0,\bm 0)+O(1)|\bm x||\bm y|.
\end{equation*}
Furthermore, if 
\begin{equation*}
\frac{\partial^2\Psi}{\partial \bm x\partial\bm y }(\bm 0,\bm 0)=\bm 0,
\end{equation*}
then
\begin{equation*}
\Psi(\bm x,\bm y)=\Psi(\bm 0,\bm y)+\Psi(\bm x,\bm 0)-\Psi(\bm 0,\bm 0)+O(1)(|\bm x|+|\bm y|)|\bm x||\bm y|.
\end{equation*}
Here $O(1)$ depends only on the Lipschitz constant for the second derivatives of $\Psi$.
\end{lemma}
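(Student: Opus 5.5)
This is the last displayed statement, Lemma~\ref{lem:quadratic}, and I would prove it by a standard Taylor/mean-value argument on the mixed difference
\[
\Delta(\bm x,\bm y):=\Psi(\bm x,\bm y)-\Psi(\bm 0,\bm y)-\Psi(\bm x,\bm 0)+\Psi(\bm 0,\bm 0).
\]
Since every estimate in the paper is applied only for small arguments (wave strengths in $O_{\epsilon_0}$), I would interpret the statement on a fixed bounded neighbourhood of the origin, say $|\bm x|,|\bm y|\le 1$. I would also interpret $O(1)$ as depending on bounds for $D^2\Psi$ there, which are controlled by its Lipschitz constant and its value at the origin.

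\textbf{First claim.} I write the mixed difference as a double integral. By the fundamental theorem of calculus applied first in $\bm x$ and then in $\bm y$,
\[
\Delta(\bm x,\bm y)=\int_0^1\!\!\int_0^1 \frac{\partial^2\Psi}{\partial\bm x\,\partial\bm y}(\theta\bm x,\vartheta\bm y)[\bm x,\bm y]\,\dd\theta\,\dd\vartheta .
\]
To justify this, I set $g(\vartheta)=\Psi(\bm x,\vartheta\bm y)-\Psi(\bm 0,\vartheta\bm y)$. Then $\Delta=g(1)-g(0)=\int_0^1 g'(\vartheta)\,\dd\vartheta$, and $g'(\vartheta)=\big(\partial_{\bm y}\Psi(\bm x,\vartheta\bm y)-\partial_{\bm y}\Psi(\bm 0,\vartheta\bm y)\big)\bm y$. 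Expanding this difference once more in $\bm x$ gives the formula. Since the mixed second derivative is bounded on the neighbourhood, it follows at once that $|\Delta|\le \sup|D^2\Psi|\,|\bm x||\bm y|$, which is the first claim.

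\textbf{Second claim.} I now assume $\partial^2_{\bm x\bm y}\Psi(\bm 0,\bm 0)=\bm 0$ and subtract this value inside the integral. The Lipschitz continuity of the second derivatives gives
\[
\Big|\frac{\partial^2\Psi}{\partial\bm x\,\partial\bm y}(\theta\bm x,\vartheta\bm y)\Big|\le \mathrm{Lip}(D^2\Psi)\,(\theta|\bm x|+\vartheta|\bm y|)\le \mathrm{Lip}(D^2\Psi)(|\bm x|+|\bm y|).
\]
Substituting this into the double integral yields $|\Delta|\le \mathrm{Lip}(D^2\Psi)(|\bm x|+|\bm y|)|\bm x||\bm y|$. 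In this case the constant genuinely depends only on the Lipschitz constant, exactly as stated.

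The only delicate point, and it is a minor one, is justifying the iterated integral representation. It requires only $C^2$ regularity, which is given. I would also make explicit the remark that in the first claim the constant really involves $\sup|D^2\Psi|$ on the bounded region; by the local Lipschitz bound this is finite on compact sets.
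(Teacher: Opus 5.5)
Your argument is correct and is essentially the standard proof behind the result the paper only cites (\cite[Lemma 2.5]{Bressan2000}): write the mixed difference as $\int_0^1\!\int_0^1 \partial^2_{\bm x\bm y}\Psi(\theta\bm x,\vartheta\bm y)[\bm x,\bm y]\,\dd\theta\,\dd\vartheta$, bound the integrand, and in the second case use $|\partial^2_{\bm x\bm y}\Psi(\theta\bm x,\vartheta\bm y)|\le \mathrm{Lip}(D^2\Psi)\,(|\bm x|+|\bm y|)$. Your caveat on the first estimate is right and worth keeping: there the constant must involve $\sup|D^2\Psi|$ on a bounded set, since $\Psi(x,y)=Kxy$ with $m=n=d=1$ has a Hessian with Lipschitz constant $0$ but mixed difference $Kxy$, whereas the second estimate holds globally with a constant depending only on $\mathrm{Lip}(D^2\Psi)$.
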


\bibliographystyle{plain}

\end{document}